\newtheorem{thm}{Theorem}[section]
\newtheorem{lem}[thm]{Lemma}
\newtheorem{cor}[thm]{Corollary}
\newtheorem{prop}[thm]{Proposition}
\newtheorem{ex}[thm]{Example}
\newtheorem{DEF}[thm]{Definition}
\theoremstyle{remark}                  
\newtheorem{rem}[thm]{Remark}
\newcommand{\Hn}{{\mathcal H}^n}
\newcommand{\Hnm}{{\mathcal H}^{n-1}}
\newcommand{\HM}{{\mathcal H}}
\newcommand{\hyperbolic}{{\mathbf H}}
\newcommand{\cost}{{\rm cost}\thinspace}
\newcommand{\dom}{{\rm dom}\thinspace}
\newcommand{\Prob}{{\mathcal P}}
\newcommand{\R}{{\mathbf R}}
\newcommand{\spt}{\mathop{\rm spt}\thinspace}
\newcommand{\sphere}{{\mathbf S}}
\newcommand{\torus}{{\mathbf T}}
\newcommand{\tb}{{\tilde b}}
\newcommand{\tc}{{\tilde c}}
\newcommand{\Azero}{{\rm (A0)}}
\newcommand{\Aone}{{\rm (A1)}}
\newcommand{\Atwo}{{\rm (A2)}}
\newcommand{\Athree}{{\rm (A3)}}
\newcommand{\Afour}{{\rm (A4)}}
\newcommand{\Athrees}{{\rm (A3)$_s$}}
\newcommand{\Bthree}{{\rm (B3)}}
\newcommand{\cross}{\mbox{\rm cross}}
\begin{document}

\title[Optimal transportation: geometry, regularity and applications]{Five lectures on optimal transportation: geometry, regularity and applications}
\author{Robert J. McCann$^*$ and Nestor Guillen}
\address{Department of Mathematics, University of Toronto, Toronto Ontario Canada M5S 2E4}
\email{mccann@math.toronto.edu}
\address{Department of Mathematics, University of Texas at Austin, Austin TX USA 78712}
\email{nguillen@math.utexas.edu}
\thanks{$^*$ [RJM]'s research was supported in part by grant 217006-08 of the Natural
Sciences and Engineering Research Council of Canada.}

\begin{abstract}
In this series of lectures we introduce the Monge-Kantorovich problem
of optimally transporting one distribution of mass onto another,
where optimality is measured against a cost function $c(x,y)$.
Connections to geometry, inequalities, and partial differential
equations will be discussed,  focusing in particular on recent developments
in the regularity theory for Monge-Amp\`ere type equations.
An application to microeconomics will also be described, which amounts to
finding the equilibrium price distribution for a
monopolist marketing a multidimensional line of products to a population of
anonymous agents whose preferences are known only statistically.
\hfill \copyright 2010 by Robert J. McCann. All rights reserved.
\end{abstract}

\maketitle
\tableofcontents


\section*{Preamble}

This survey is based on a series of five lectures
by Robert McCann (of the University of Toronto), delivered at a
summer school on
``New Vistas in Image Processing and Partial Differential Equations''
organized 7-12 June 2010 by Irene Fonseca, Giovanni Leoni, and Dejan Slepcev
of Carnegie Mellon University on
behalf of the Center for Nonlinear Analysis there.
The starting point for the manuscript which emerged was a detailed set of notes taken
during those lectures by Nestor Guillen (University of Texas at Austin).

These notes are intended to convey a flavor for the subject,  without getting bogged
down in too many technical details.  Part of the discussion is therefore impressionistic,
and some of the results are stated under
the tacit requirement that the supports of the measures $\mu^\pm$ be compact,
with the understanding that they extend to non-compactly supported measures under appropriate
hypotheses \cite{McCann95} \cite{GangboMcCann96} \cite{FathiFigalli10} \cite{FigalliGigli10p}
concerning the behaviour near infinity of the measures and the costs.
The choice of topics to be covered in a
series of lectures is necessarily idiosyncratic.
General references for these and other topics include papers of the first author posted
on the website {\tt www.math.toronto.edu/mccann} and the two books by
Villani \cite{Villani03} \cite{Villani09}.  Earlier surveys include the
ones by Ambrosio \cite{Ambrosio03}, Evans \cite{Evans98}, Urbas \cite{Urbas98p}
and Rachev and R\"uschendorf \cite{RachevRuschendorf98}.
Many detailed references to the literature may be found there,  to augment the
bibliography of selected works included below. \\

\section{An introduction to optimal transportation}

\subsection{Monge-Kantorovich problem: transporting ore from mines to factories}

The problem to be discussed can be caricatured as follows:
imagine we have a distribution of iron mines across the countryside,
producing a total of 1000 tonnes of iron ore weekly, and a distribution
of factories across the countryside that consume a total of 1000 tonnes of iron ore weekly.
Knowing the {\em cost} $c(x,y)$ per ton of ore transported from a mine at $x$ to a factory at $y$,
the problem is to decide which mines should be supplying which factories
so as to minimize the total transportation costs. \\

To model this problem mathematically,  let the triples $(M^{\pm},d^{\pm},\omega^\pm)$
denote two complete separable metric spaces $M^\pm$ --- also called {\em Polish} spaces ---
equipped with distance functions
$d^{\pm}$ and Borel reference measures $\omega^\pm$.  These two metric spaces will represent the
landscapes containing the mines and the factories. They will often be assumed to be
geodesic spaces,  and/or to coincide.  Here a
{\em geodesic space} $M(=M^\pm)$ refers to a metric space in which
every pair of points $x_0,x_1 \in M$ is connected by a curve
$s \in [0,1] \to x_s \in M$  satisfying
\begin{equation}\label{geodesic}
d(x_0,x_s)=sd(x_0,x_1) \qquad {\rm and} \qquad d(x_s,x_1)=(1-s)d(x_0,x_1)
\qquad \forall\ s \in [0,1].
\end{equation}
Such a curve is called a {\em geodesic segment}.

e.g. 1) Euclidean space: $M=\R^n$, $d(x,y)=|x-y|$, $\omega = Vol = \Hn =$ Hausdorff $n$-dimensional measure,
geodesic segments take the form $x_s=(1-s)x_0+sx_1$.\\

e.g. 2) Complete Riemannian manifold $(M=M^{\pm},g_{ij})$, with or without boundary:
$d\omega = dVol = d\Hn = (\det g_{ij})^{1/2} d^nx$,
$$\frac{1}{2} d^2(x_0,x_1)=\inf \limits_{\{x_s|x(0)=x_0,x(1)=x_1\}}
\frac{1}{2} \int_0^1 \langle\dot{x_s}, \dot{x_s}\rangle_{g(x_s)}ds.$$
A minimizing curve $s\in [0,1] \longmapsto x_s \in M$ exists by the Hopf-Rinow theorem;
it satisfies \eqref{geodesic}, and is called a Riemannian {\em geodesic}.

The distributions of mines and factories will be modeled by
Borel probability measures $\mu^+$ on $M^+$ and $\mu^-$ on $M^-$, respectively.
Any Borel map $G:M^+\longrightarrow M^-$ defines an image or {\em pushed-forward} measure
$\nu = G_\# \mu^+$ on $M^-$ by
\begin{equation}\label{push-forward} 
(G_\#\mu^+)[V] := \mu^+[G^{-1}(V)] \qquad \forall\ V \subset M^-.
\end{equation}
A central problem in optimal transportation is to find,
among all maps $G:M^+ \longrightarrow M^-$ pushing $\mu^+$ forward to $\mu^-$,
one which minimizes the total cost
\begin{equation}\label{Monge cost}
\cost(G)=\int_{M^+}c(x,G(x))d\mu^+(x).
\end{equation}

This problem was first proposed by Monge in 1781, taking
the Euclidean distance $c(x,y)=|x-y|$ as his cost function \cite{Monge81}.
For more generic costs, some
basic mathematical issues such as the existence, uniqueness, and mathematical structure
of the optimizers are addressed in the second lecture below.  However nonlinearity
of the objective functional and a lack of compactness or convexity for its domain
make Monge's formulation of the problem difficult to work with. One and a half centuries later,
Kantorovich's relaxation of the problem to an (infinite-dimesional) linear program
provided a revolutionary tool \cite{Kantorovich42} \cite{Kantorovich48}.

\subsection{Wasserstein distance and geometric applications}

The minimal cost of transport between $\mu^+$ and $\mu^-$ associated to $c(\cdot,\cdot)$
will be provisionally denoted by
\begin{equation}\label{Monge work}
W_c(\mu^+,\mu^-) = \inf \limits_{G_\#\mu^+=\mu^-} \cost(G).
\end{equation}
It can be thought of as quantifying the discrepancy between $\mu^+$ and $\mu^-$,
and is more properly defined using Kantorovich's formulation \eqref{Kantorovich},
though we shall eventually show the two definitions coincide in many cases of interest.
When $M=M^\pm$, the costs  $c(x,y)=d^p(x,y)$ with $0<p<1$ occur naturally
in economics and operations research,  where it is often the case that there is
an economy of scale for long trips \cite{McCann99}.  In this case, the quantity
$W_c(\mu^+,\mu^-)$ defines a metric on the space $\Prob(M)$ of Borel probability measures on $M$.
For $p \ge 1$ on the other hand,
it is necessary to extract a $p$-$th$ root to obtain a metric
\begin{equation}\label{Wasserstein}
d_p(\mu^+,\mu^-) := W_c(\mu^+,\,u^-)^{1/p}
\end{equation}
on $\Prob(M)$ which satisfies the triangle inequality.

Though the initial references dealt specifically with the case $p=1$ \cite{KantorovichRubinstein58} \cite{Wasserstein69},
the whole family of distances are now called
{\em Kantorovich-Rubinstein} or {\em Wasserstein} metrics \cite{Dudley76}.
Apart from the interesting exception of the limiting case $d_\infty = \lim_{p \to \infty} d_p$
\cite{McCann06} \cite{ChampionDePascaleJuutinen08}, on a compact metric space $M$
all these metrics give rise to the same topology,
namely weak-$*$ convergence.  For non-compact $M$, the $d_p$
topologies differ from each other only in the number of moments of a sequence of measures which
are required to converge.  Moreover, $(\Prob(M),d_p)$ inherits geometric properties from $(M,d)$,
such as being a geodesic space.  Notions such as Ricci curvature in the underlying space $(M,d)$
can be characterized by the geodesic convexity first explored in \cite{McCann97}
of certain functionals on the larger space $\Prob(M)$ ---
such as Boltzmann's entropy. 
One direction of this equivalence was proved
by Cordero-Erausquin, McCann, and Schmuckensch\"ager
\cite{CorderoMcCannSchmuckenschlager01}
and Otto and Villani \cite{OttoVillani00} in projects which were initially independent
(see also \cite{Cordero-Erausquin99S} and \cite{CorderoMcCannSchmuckenschlager06}),
while the converse was established by von Renesse and Sturm
\cite{vonRenesseSturm05} confirming the formal arguments of \cite{OttoVillani00}.
This equivalence forms the basis of Lott-Villani and Sturm's
definition of lower bounds for Ricci curvature in the metric measure space setting,
without reference to any underlying Riemannian structure \cite{LottVillani09}
\cite{Sturm06ab}.  McCann and Topping used a similar idea to
characterize the Ricci flow \cite{McCannTopping10},
which led Lott \cite{Lott09} and Topping \cite{Topping09} to simpler proofs of Perelman's
celebrated monotonicity results \cite{Perelman02p}.  Despite the interest of these
recent developments, we shall not pursue them farther in these lectures,  apart from
sketching a transportation-based proof of the isoperimetric theorem whose ideas underpin
many such geometric connections.

\subsection{Brenier's theorem and convex gradients}

It turns out that Monge's cost $c(x,y)=|x-y|$ is among the hardest to deal with,
due to its lack of strict convexity.  For this cost, the minimizer of \eqref{Monge work}
is not generally unique, even on the line $M^\pm=\R$.
Existence of solutions is tricky to establish:   the first `proof', due to Sudakov \cite{Sudakov76},
relied on an unsubstantiated claim which turned out to be correct only in the
plane $M^\pm=\R^2$ \cite{BianchiniCavalletti10p};  higher dimensional
arguments were given increasing generality by Evans-Gangbo \cite{EvansGangbo99},  and then
Ambrosio \cite{Ambrosio03}, Caffarelli-Feldman-McCann \cite{CaffarelliFeldmanMcCann00},
and Trudinger-Wang \cite{TrudingerWang01} independently.
Simpler approaches were proposed by Champion-DePascale \cite{ChampionDePascale09p}
and Bianchini-Cavalletti \cite{BianchiniCavalletti10p} more recently.\\

The situation for the quadratic cost $c(x,y)=|x-y|^2$ is much simpler,
mirroring the relative simplicity of the Hilbert geometry of $L^2$ among
Banach spaces $L^p$ with $p \ge 1$. Brenier \cite{Brenier87} \cite{Brenier91}
(and others around the same time
\cite{PurserCullen87} \cite{SmithKnott87}
\cite{CuestaMatran89} \cite{RuschendorfRachev90} \cite{Cuesta-AlbertosTuero-Diaz93} \cite{AbdellaouiHeinich94})
showed that there is a unique  \cite{Cuesta-AlbertosTuero-Diaz93} \cite{AbdellaouiHeinich94} solution \cite{CuestaMatran89} ,
and characterized it as a convex gradient \cite{PurserCullen87} \cite{SmithKnott87} \cite{RuschendorfRachev90}.

\begin{thm}[A version of Brenier's theorem]
If $\mu^+ \ll dVol$ and $\mu^-$ are Borel probability measures on $M^\pm=\R^n$,
then there exists a convex function $u:\R^n \to \R \cup \{+\infty\}$
whose gradient $G=Du:\R^n \to\R^n$ pushes $\mu^+$ forward to $\mu^-$.
Apart from changes on a set of measure zero,
$G$ is the only map to arise in this way. Moreover, $G$ uniquely minimizes
Monge's problem \eqref{Monge work} for the cost $c(x,y)=|x-y|^2$.
\end{thm}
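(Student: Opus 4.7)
My plan is to lift Monge's problem to Kantorovich's relaxation, extract a convex dual potential, and recover the desired Monge map as its gradient using the absolute continuity hypothesis on $\mu^+$. Expanding $|x-y|^2=|x|^2-2x\cdot y+|y|^2$ and noting that $\int|x|^2\,d\mu^+$ and $\int|y|^2\,d\mu^-$ are fixed constants (finite by the compact-support convention of the Preamble), minimizing the quadratic cost over joint measures $\pi$ with marginals $\mu^\pm$ is equivalent to \emph{maximizing} $\int x\cdot y\,d\pi(x,y)$ over such couplings.

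By Kantorovich duality, this supremum equals
\[
\inf\Bigl\{\int u\,d\mu^+ + \int v\,d\mu^- \ :\ u(x)+v(y)\ge x\cdot y \Bigr\}.
\]
Given any admissible pair $(u,v)$, the objective only decreases when one replaces $v$ by the Legendre--Fenchel transform $u^*(y):=\sup_x(x\cdot y-u(x))$ and then replaces $u$ by $u^{**}$; so an optimizing pair may be taken of the form $(u,u^*)$ with $u:\R^n\to\R\cup\{+\infty\}$ convex and lower semicontinuous. Any optimal plan $\pi$ must then saturate Fenchel's inequality $u(x)+u^*(y)\ge x\cdot y$ on its support, and equality holds precisely when $y\in\partial u(x)$, the convex subdifferential.

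Here the hypothesis $\mu^+\ll dVol$ is decisive. A convex function on $\R^n$ is differentiable Lebesgue-a.e., so $\partial u(x)=\{Du(x)\}$ off a Lebesgue-null --- hence $\mu^+$-null --- set. Consequently the optimal plan is supported on the graph of $Du$, i.e.\ $\pi=(\mathrm{id}\times Du)_\#\mu^+$; reading off the second marginal gives $Du_\#\mu^+=\mu^-$, and the map $G=Du$ realizes the Kantorovich infimum within the class of Monge maps. Uniqueness follows by applying the same argument to the average of two supposed optimal couplings: any competitor $Dv$ with $Dv_\#\mu^+=\mu^-$ would induce an equally optimal plan, so both graphs lie in the subdifferential of a common convex potential, forcing $Du=Dv$ $\mu^+$-a.e.

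The main obstacle is the duality step: proving that the Kantorovich linear program admits a dual pair, and that one can promote it to the canonical form $(u,u^*)$ without losing admissibility or optimality. With compact supports this is a clean Hahn--Banach / Legendre-transform exercise, exploiting the continuity of $c(x,y)=|x-y|^2$ and weak-$*$ compactness of the set of couplings. In the non-compact setting one would additionally need to control the growth of $u$ and $u^*$ at infinity, which is why the statement is phrased under the compactly supported convention flagged in the Preamble.
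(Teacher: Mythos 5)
Your argument is correct and is essentially the proof strategy the paper itself sketches for the general Theorem~\ref{T:GangboLevin}, specialized to the bilinear cost $c(x,y)=-x\cdot y$: Kantorovich duality, replacement of an arbitrary dual pair by the Legendre-conjugate pair $(u,u^*)$ (the $c$-transform of Definition~\ref{c-convex} in disguise, cf.\ Remark~\ref{Legendre-Fenchel}), Lebesgue-a.e.\ differentiability of the convex potential combined with $\mu^+\ll dVol$ to collapse the optimal plan onto $\mathrm{Graph}(Du)$, and uniqueness. The paper defers the detailed proof of Theorem~1.1 to McCann~'95 but then develops exactly this outline in \S2; your averaging-plus-Rockafellar uniqueness step is a minor variant of the paper's ``any competing $c$-convex $\tilde u$ also maximizes the dual, hence $\tilde\gamma=\gamma$'' argument, and both are standard.
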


Remark:  In this generality the theorem was established by McCann \cite{McCann95},
where the assumption $\mu^+ \ll dVol$ was also relaxed.  A further relaxation by
Gangbo-McCann \cite{GangboMcCann96} is shown to be sharp in Gigli \cite{Gigli09p}.\\

\subsection{Fully-nonlinear degenerate-elliptic Monge-Amp\`ere type PDE}
\label{S:nonlinear PDE}
How do partial differential equations
(more specifically, fully nonlinear degenerate elliptic PDE) enter the picture?
Let's consider the constraint $G_\#\mu^+=\mu^-$, assuming moreover that
$\mu^{\pm}=f^{\pm}dVol^{\pm}$ on $\R^n$ or on Riemannian manifolds $M^\pm$.
Then if $\phi \in C(M^-)$ is a test function, it follows that
$$
\int_{M^+} \phi(G(x))f^+(x) dVol^+(x)=\int_{M^-}\phi(y)f^-(y) dVol^-(y).
$$
If $G$ was a diffeomorphism, we could combine the Jacobian factor
$d^n y = |\det DG(x)| d^n x$ from the change of variables $y=G(x)$ with arbitrariness
of $\phi\circ G$ to conclude $f^+(x)=|\det DG(x)|f^-(G(x))$ for all $x$.
We will actually see that this nonlinear equation holds $f^+$-a.e.\ as a consequence
of Theorem \ref{T:McCannPassWarren}.

In the case of Brenier's map $G(x)=Du(x)$, convexity of $u$ implies non-negativity
of the Jacobian $DG(x)=D^2u(x)\geq 0$. It
also guarantees almost everywhere differentiability of $G$ by Alexandrov's theorem
(or by Lebesgue's theorem in one dimension);  see Theorem \ref{T:McCannPassWarren}
for the sketch of a proof.
Thus $u$ solves the Monge-Amp\'ere equation \cite{Brenier91}
\begin{equation}\label{Monge-Ampere}
f^+(x)=\det(D^2u(x))f^-(Du(x))
\end{equation}
a.e.\  \cite{McCann97} subject to the condition $Du(x) \in M^-$ for $x \in M^+$.
This is known as the {\em 2nd boundary value problem} in the
partial differential equations literature.  We shall see that
linearization of this equation around a {\em convex} solution leads
to a (degenerate) elliptic operator \eqref{uniformly elliptic linearization},
whose ellipticity becomes uniform
if the solution $u$ is smooth and {\em strongly} convex, meaning positivity
of its Hessian is strict: $D^2 u(x)>0$.\\

\subsection{Applications}
The Monge-Kantorovich theory has found a wide variety of applications in pure
and applied mathematics.  On the pure side,  these include connections to
inequalities  \cite{McCann94} \cite{Trudinger94} \cite{McCann97}
\cite{Cordero-ErausquinNazaretVillani04} \cite{MaggiVillani08} \cite{FigalliMaggiPratelli10p},
geometry (including sectional \cite{Loeper09} \cite{KimMcCann07p}, Ricci
\cite{LottVillani09} \cite{Lott09} \cite{Sturm06ab} \cite{McCannTopping10}
and mean \cite{KimMcCannWarren09p} curvature),
nonlinear partial differential equations
\cite{Brenier87} \cite{Caffarelli92} \cite{Caffarelli96} \cite{Urbas97}
\cite{MaTrudingerWang05},
and dynamical systems
(weak KAM theory \cite{BernardBuffoni07};
nonlinear diffusions \cite{Otto01};
gradient flows \cite{AmbrosioGigliSavare05}).
On the applied side these
include applications to vision (image registration and morphing \cite{HakerZhuTannenbaumAngenent04}),
economics (equilibration of supply with demand \cite{Ekeland05} \cite{ChiapporiMcCannNesheim10},
structure of cities \cite{CarlierEkeland08},
maximization of profits \cite{RochetChone98} \cite{Carlier01} \cite{FigalliKimMcCann-econ}
or social welfare \cite{FigalliKimMcCann-econ}),
physics \cite{Dobrushin70} \cite{Tanaka73} \cite{McCann98} \cite{FigalliMaggi10p},
engineering
(optimal shape / material design \cite{BouchitteButtazzo01} \cite{BouchitteGangboSeppecher08},
reflector antenna design \cite{GlimmOliker03} \cite{Wang96} \cite{Wang04},
aerodynamic resistance \cite{Plakhov04b}), 
atmosphere and ocean dynamics (the semigeostrophic theory \cite{CullenPurser84} \cite{CullenPurser89} \cite{Cullen06}),
biology (irrigation \cite{BernotCasellesMorel09}, leaf growth \cite{Xia07}),
and statistics \cite{RachevRuschendorf98}.
See \cite{Villani03} \cite{Villani09} for further directions, references, and discussion.

\subsection{Euclidean isoperimetric inequality}

It was observed (independently by
McCann \cite{McCann94} \cite{McCann97} and Trudinger \cite{Trudinger94})
that a solution to the second boundary value problem for the Monge-Amp\`ere equation
\eqref{Monge-Ampere} yields a simple proof of the isoperimetric inequality (with its
sharp constant): for $M^+ \subset \R^n$
\begin{equation}\label{isoperimetry}
Vol(M^+) = Vol(B_1) \quad \Rightarrow \quad
\Hnm(\partial M^+) \geq \Hnm(\partial B_1).
\end{equation}
The following streamlined argument was perfected later;
it combines optimal maps with 
an earlier approach from Gromov's appendix to~\cite{MilmanSchechtman86}.

\begin{proof}
Take $f^+=\chi_{M^+}$ and $f^-=\chi_{B_1}$ to be uniformly distributed.
Brenier's theorem then gives a volume-preserving map $G=Du$ between $M^+$ and $B_1$:
$$
1=det^{1/n}(D^2u(x)).
$$
The expression on the right is the geometric mean of the eigenvalues of $D^2 u(x)$,
which are non-negative by convexity of $u$,  so
the arithmetic-geometric mean inequality yields
\begin{equation}\label{g/am}
1\leq (arithmetic \; mean \;of \; eigenvalues)=\frac{1}{n}\Delta u
\end{equation}
almost everywhere in $M^+$.  (The right hand side is the absolutely
continuous part of the distributional Laplacian;  convexity of $u$
allows it to be replaced by the full distributional Laplacian of $u$ without
spoiling the inequality.)  Integrating inequality \eqref{g/am} on $M^+$ yields
\begin{equation}\label{int by parts}
Vol(M^+)\leq \frac{1}{n}\int_{M^+}\Delta u \; d^nx = \frac{1}{n} \int_{\partial M^+} D u(x) \cdot \hat n_{M^+}(x) d\Hnm(x).
\end{equation}

Now, since $G=Du \in B_1$ whenever $x \in M^+$, we have $|D u |\leq 1$, thus
$Vol(M^+)=Vol(B_1)$ gives
\begin{equation}\label{comparison}
Vol(B_1)\leq \frac{1}{n}\int_{\partial M^+} 1\, d\Hnm = \frac{1}{n} \Hnm(\partial M^+).
\end{equation}
In the case special $M^+ = B_1$, Brenier's map coincides with the identity map
so equalities hold throughout \eqref{g/am}--\eqref{comparison},
yielding the desired conclusion \eqref{isoperimetry}!
\end{proof}

As the preceding proof shows,
one of the important uses of optimal transportation
in analysis and geometry is to encode non-local `shape' information
into a map which can be localized,  reducing global geometric inequalities to
algebraic inequalities under an integral.  For subsequent developments in this direction,
see works of Ambrosio, Cordero-Erausquin, Carrillo, Figalli, Gigli, Lott, Maggi, McCann, Nazaret,
Otto, Pratelli, von Renesse, Schmuckenschl\"ager, Sturm, Topping and Villani in
\cite{McCann98} \cite{OttoVillani00} \cite{CorderoMcCannSchmuckenschlager01}
\cite{CorderoMcCannSchmuckenschlager06} \cite{Cordero-ErausquinNazaretVillani04}
\cite{AmbrosioGigliSavare05}  \cite{CarrilloMcCannVillani06} \cite{vonRenesseSturm05}
\cite{Sturm06ab} \cite{LottVillani09} \cite{Lott09}
\cite{McCannTopping10} \cite{FigalliMaggiPratelli10p}.


\subsection{Kantorovich's reformulation of Monge's problem}

Now let us turn to the proof of Brenier's theorem and the ideas it involves.
A significant breakthrough was made by Kantorovich \cite{Kantorovich42} \cite{Kantorovich48},
who \emph{relaxed} our optimization problem (the Monge problem), by dropping the requirement
that all the ore from a given mine goes to a single factory. In other words:\\

Replace $G:M^+\to M^-$ by a measure $0 \leq \gamma$ on $M^+\times M^-$ whose marginals are
$\mu^+$ and $\mu^-$, respectively, and among such measures choose $\gamma$ to minimize the
functional

$$\cost(\gamma)=\int_{M^+\times M^-}c(x,y)d\gamma(x,y).$$

Such a joint measure $\gamma$ is also known as a ``transport plan'' (in analogy with``transport map''). This is better than Monge's original formulation for at least two reasons:\\

1) The functional to be minimized now depends linearly on $\gamma$.\\

2) The set $\Gamma(\mu^+,\mu^-)$ of admissible competitors $\gamma$ is a convex subset of a suitable
 Banach space: namely,  the dual space to continuous functions $(C(M^+ \times M^-),\|\cdot\|_\infty)$
 (which decay to zero at infinity in case the compactness of $M^\pm$ is merely local). \\

In this context, well-known results in functional analysis guarantee existence of a
minimizer $\gamma$ under rather general hypotheses on $c$ and $\mu^\pm$.
Our primary task will be to understand when the solution will be unique, and to characterize it.
At least one minimizer will be an extreme point of the convex set $\Gamma(\mu^+,\mu^-)$,
but its uniqueness remains an issue.
Necessary and sufficient conditions will come from the duality theory
of (infinite dimensional) linear programming \cite{AndersonNash87} \cite{Kellerer84}.

\section{Existence, uniqueness, and characterization of optimal maps}

Let's get back to the Kantorovich problem:

\begin{equation}\label{Kantorovich}
W_c(\mu^+,\mu^-)
:= \min \limits_{\gamma \in \Gamma(\mu^+,\mu^-)}\int_{M^+\times M^-}c(x,y)d\gamma(x,y)
= \min \limits_{\gamma \in \Gamma(\mu^+,\mu^-)} \cost(\gamma)
\end{equation}

The basic geometric object of interest to us will be the support
$\spt \gamma := S$ of a competitor $\gamma$, namely the smallest closed subset
$S \subset M^+ \times M^-$ carrying the full mass of $\gamma$.\\

What are some of the competing candidates for the minimizer?\\

eg.1) Product measure: $\mu^+ \otimes \mu^- \in \Gamma(\mu^+,\mu^-)$, for which
$\spt \gamma = \spt \mu^+ \times \spt \mu^-$. \\

eg.2) Monge measure: if $G: M^+ \to M^-$ with $G_\#\mu^+=\mu^-$ then
$id \times G: M^+ \longrightarrow M^+ \times M^-$ and
$\gamma= (id\times G)_\#\mu^+ \in \Gamma(\mu^+,\mu^-)$ has $\cost(\gamma)=\cost(G)$.\\

The second example shows in what sense Kantorovich's formulation is a relaxation of Monge's problem,
and why \eqref{Monge work} must be at least as big as \eqref{Kantorovich}.  In this example
$\spt \gamma$ will be the (closure of the) graph of $G:M^+\longrightarrow M^-$,  which suggests how
Monge's map $G$ might in principle be reconstructed from a minimizing Kantorovich measure $\gamma$.
Before attempting this,  let us recall a notion which characterizes optimality
in the Kantorovich problem.

\begin{DEF}[$c$-cyclically monotone sets]
$S \subset M^+\times M^-$ is {\em $c$-cyclically monotone} if and only if all
$k \in \mathbf{N}$ and $(x_1,y_1),...,(x_k,y_k) \in S$ satisfy
\begin{equation}\label{c-cyclically monotone}
\sum \limits_{i=1}^kc(x_i,y_i) \leq \sum \limits_{i=1}^k c(x_i,y_{\sigma(i)})
\end{equation}
for each permutation $\sigma$ of $k$ letters.
\end{DEF}

The following result was deduced by Smith and Knott \cite{SmithKnott92}
from a theorem of R\"uschendorf \cite{Ruschendorf90}. A more direct proof was given
by Gangbo and McCann \cite{GangboMcCann96};  its converse is true as well.

\begin{thm}[Smith and Knott '92]\label{T:SmithKnott}
If $c \in C(M^+ \times M^-)$, then optimality of $\gamma \in \Gamma(\mu^+,\mu^-)$ implies
$\spt \gamma$ is a $c$-cyclically monotone set.
\end{thm}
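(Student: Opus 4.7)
The plan is to prove the contrapositive by producing an explicit local rearrangement of mass that strictly lowers the cost. Suppose $\spt\gamma$ were not $c$-cyclically monotone, so that for some $k \in \mathbf{N}$, points $(x_1,y_1),\ldots,(x_k,y_k) \in \spt\gamma$, and permutation $\sigma$ one has
$$\delta := \sum_{i=1}^{k} c(x_i,y_i) - \sum_{i=1}^{k} c(x_i,y_{\sigma(i)}) > 0.$$
The heuristic is to peel a small amount $\epsilon$ of mass off each $(x_i,y_i)$ pairing in $\gamma$ and reassemble it as $(x_i,y_{\sigma(i)})$ pairings, keeping both marginals intact while shaving the total cost by roughly $\epsilon\delta$.

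First I would use the continuity of $c$ to select open neighborhoods $U_i \ni x_i$ in $M^+$ and $V_j \ni y_j$ in $M^-$ for which the boxes $U_i \times V_i$ are pairwise disjoint and the oscillation of $c$ on each box $U_i \times V_j$ with $j \in \{i,\sigma(i)\}$ is at most $\delta/(4k)$. Since $(x_i,y_i) \in \spt\gamma$, the numbers $m_i := \gamma(U_i \times V_i)$ are strictly positive; I set $\alpha_i := \frac{1}{m_i}\gamma|_{U_i \times V_i}$, a probability measure supported on $U_i \times V_i$, and let $\mu_i^{\pm}$ denote its first and second marginals.

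Next I would introduce, for a small $\epsilon \in (0,\min_i m_i]$, the candidate
$$\gamma' := \gamma - \epsilon\sum_{i=1}^{k}\alpha_i + \epsilon\sum_{i=1}^{k} \mu_i^+ \otimes \mu_{\sigma(i)}^-$$
and check that $\gamma' \in \Gamma(\mu^+,\mu^-)$. Positivity follows from disjointness of the boxes, since $\epsilon\alpha_i \le m_i \alpha_i = \gamma|_{U_i \times V_i}$. The first marginals of $\alpha_i$ and of $\mu_i^+ \otimes \mu_{\sigma(i)}^-$ both equal $\mu_i^+$, so the $x$-marginal of $\gamma'$ agrees with that of $\gamma$; for the $y$-marginal the subtracted and added contributions are $\sum_i \mu_i^-$ and $\sum_i \mu_{\sigma(i)}^-$, which coincide because $\sigma$ is a permutation.

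Finally, since $\alpha_i$ is supported in $U_i \times V_i$ and $\mu_i^+ \otimes \mu_{\sigma(i)}^-$ in $U_i \times V_{\sigma(i)}$, the oscillation bound gives
$$\cost(\gamma')-\cost(\gamma) \le \epsilon\Bigl(\sum_{i=1}^{k}\bigl[c(x_i,y_{\sigma(i)})-c(x_i,y_i)\bigr] + \tfrac{\delta}{2}\Bigr) = -\tfrac{\epsilon\delta}{2} < 0,$$
contradicting the optimality of $\gamma$. The main obstacle is this last estimate: one must arrange a single shrinking scale of neighborhoods that simultaneously controls the values of $c$ near every diagonal pair $(x_i,y_i)$ \emph{and} every permuted pair $(x_i,y_{\sigma(i)})$. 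Uniform continuity of $c$ on a compact set containing all these $2k$ points makes this step harmless, and the remainder is bookkeeping about positivity and marginals.
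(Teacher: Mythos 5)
Your proposal fleshes out precisely the perturbation argument that the paper attributes to Gangbo--McCann and states only in one line: subtract mass near the ``original'' pairs $(x_i,y_i)$, redeposit it near the ``permuted'' pairs $(x_i,y_{\sigma(i)})$, and verify marginals, positivity, and a strict cost drop. The construction $\gamma' = \gamma - \epsilon\sum_i\alpha_i + \epsilon\sum_i\mu_i^+\otimes\mu_{\sigma(i)}^-$ is the standard way to make that precise, your marginal bookkeeping is correct (the $x$-marginals cancel term by term, the $y$-marginals cancel because $\sigma$ is a bijection of indices), and the oscillation bound $\delta/(4k)$ on each of the $2k$ relevant boxes gives exactly the $-\epsilon\delta/2$ you claim.

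There is one unaddressed point. You ask for the boxes $U_i\times V_i$ to be \emph{pairwise disjoint}, and your positivity argument (``$\epsilon\alpha_i\le m_i\alpha_i=\gamma|_{U_i\times V_i}$'') genuinely uses that disjointness. But nothing in the definition of $c$-cyclical monotonicity forces the witnessing pairs $(x_1,y_1),\dots,(x_k,y_k)$ to be distinct, and if $(x_i,y_i)=(x_j,y_j)$ for $i\ne j$ then disjoint boxes around them do not exist. You should either (a) note that a violating tuple can always be replaced by one with distinct pairs --- for instance, decompose $\sigma$ into cycles, keep a cycle on which the violation is positive, and observe that deleting a repeated pair from a cycle leaves the cyclic sum difference unchanged --- or (b) drop the distinctness assumption, take $\epsilon\le\min_j m_j/r_j$ where $r_j$ is the multiplicity of each distinct pair, and observe that the marginal cancellation still holds because it is a sum over indices rather than over distinct points. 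Either fix is routine, but as written the disjointness claim is an assertion you have not earned. A second, smaller remark: the appeal to ``uniform continuity of $c$ on a compact set'' is superfluous; continuity of $c$ at the finitely many points $(x_i,y_i)$ and $(x_i,y_{\sigma(i)})$ already yields the required neighbourhoods. (One should also tacitly assume $\cost(\gamma)<\infty$ so the comparison is meaningful, but that is a gap in the theorem statement itself, not in your proof.)
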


The idea of the proof in \cite{GangboMcCann96} is that if $spt\;\gamma$ is not cyclically monotone,
then setting $o_i = (x_i,y_i)$ and $z_i=(x_i,y_{\sigma(i)})$ we could with some care
define a perturbation
$$
\gamma_\epsilon=\gamma + \epsilon \mbox{(near the z's)}-\epsilon \mbox{(near the o's)}
$$
in $\Gamma(\mu^+,\mu^-)$ of $\gamma$ for which $\cost(\gamma_\epsilon) < \cost(\gamma_0)$,
thus precluding the optimality of $\gamma$.

e.g. If $c(x,y)=-x\cdot y$ or $c(x,y)=\frac{1}{2}|x-y|^2$ then \eqref{c-cyclically monotone} becomes
$\sum \limits_{i=1}^k \langle y_i,x_{i}-x_{i-1}\rangle \ge 0$ with the convention $x_{0}:=x_k$.
This is simply called {\em cyclical monotonicity}, and can be viewed as a discretization of
$$
\oint \bar{y}(x)\cdot dx \ge 0,
$$
a necessary and sufficient condition for the vector field $\bar{y}(x)$ to be conservative,
meaning $y = Du(x)$. This heuristic underlies a theorem of Rockafellar \cite{Rockafellar66}:

\begin{thm}[Rockafellar '66]\label{T:Rockafellar}
The set $S \subset \R^n \times \R^n$ is cyclically monotone if and only if there exists a
convex function $u: \R^n \to \R\cup \{\infty\}$ such that $S \subset \partial u$ where
\begin{equation}\label{subdifferential}
\partial u := \{ (x,y) \in \R^n\times\R^n \mid
u(z) \ge  u(x) + \langle z -x,y \rangle +o(|z-x|) \quad \forall z \in \R^n\}.
\end{equation}
\end{thm}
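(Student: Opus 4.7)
The plan is to prove both directions, with the easy direction (convex potential $\Rightarrow$ cyclical monotonicity) following from summing subgradient inequalities around a cycle, and the harder direction via Rockafellar's explicit construction of a potential. For the easy direction, suppose $S \subset \partial u$ for some convex $u$. Given $(x_1, y_1), \ldots, (x_k, y_k) \in S$ with cyclic convention $x_0 := x_k$, I would apply the subgradient inequality at $(x_i, y_i)$ with test point $x_{i-1}$ to get
\begin{equation*}
u(x_{i-1}) \geq u(x_i) + \langle y_i, x_{i-1} - x_i \rangle,
\end{equation*}
then sum over $i = 1, \ldots, k$: the two sums of $u$-values cancel by cyclicity, leaving $\sum \langle y_i, x_i - x_{i-1} \rangle \geq 0$, the stated cyclical monotonicity condition.

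For the hard direction, I would fix a basepoint $(x_*, y_*) \in S$ and define the Rockafellar potential
\begin{equation*}
u(z) := \sup \left\{ \langle y_N, z - x_N \rangle + \sum_{i=1}^{N} \langle y_{i-1}, x_i - x_{i-1} \rangle \right\},
\end{equation*}
the supremum taken over all $N \geq 0$ and all finite chains $(x_1, y_1), \ldots, (x_N, y_N) \in S$, under the convention $(x_0, y_0) := (x_*, y_*)$. Each argument of the supremum is affine in $z$, making $u$ automatically convex and lower semicontinuous. The remaining task is to show $u$ is proper, i.e.\ not identically $+\infty$, for which I would check $u(x_*) = 0$: the trivial chain ($N = 0$) contributes value $0$ at $z = x_*$, while any chain of length $N \geq 1$ evaluated at $z = x_*$ telescopes to the closed-cycle sum $\sum_{i=0}^{N} \langle y_i, x_{i+1} - x_i \rangle$ (with $x_{N+1} := x_*$), which is $\leq 0$ upon applying cyclical monotonicity to this $(N+1)$-tuple with the backward cyclic permutation.

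Finally, to verify $S \subset \partial u$, I would use a chain-extension trick: given $(x, y) \in S$ and $z \in \R^n$, any chain nearly attaining $u(x)$ at the point $x$ can be extended by appending $(x_{N+1}, y_{N+1}) := (x, y)$, and the resulting longer chain evaluated at $z$ equals the old chain's value at $x$ plus $\langle y, z - x \rangle$. Taking the supremum over such extensions gives the global subgradient inequality $u(z) \geq u(x) + \langle y, z - x \rangle$, from which $y \in \partial u(x)$ follows in the paper's $o(|z-x|)$ sense. The main obstacle will be the index and sign bookkeeping required to recast the cycle sum appearing in $u(x_*)$ into the exact form covered by the cyclical monotonicity hypothesis; once that reduction is secured, the rest is a formal consequence of the structure of the Rockafellar supremum.
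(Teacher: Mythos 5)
Your proof is correct and follows the standard Rockafellar argument, which is the only natural proof of this result. Note that the paper itself does not supply a proof of Theorem \ref{T:Rockafellar}, citing Rockafellar's original 1966 paper instead, so there is no in-paper proof to compare against; but your construction is precisely Rockafellar's.

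Two small points worth making explicit if you were to write this up in full. First, in the easy direction you implicitly use that for a \emph{convex} $u$, membership in the paper's $o(|z-x|)$-subdifferential \eqref{subdifferential} implies the global inequality $u(z) \ge u(x) + \langle z-x, y\rangle$ without error term; this is a standard fact (a chord argument shows that any strict violation of the global inequality at some $z_0$ propagates linearly, not sublinearly, along the segment from $x$ to $z_0$), but it should be invoked since the paper's definition of $\partial u$ is the a priori weaker one. Second, in the hard direction you should observe that $u(x) < \infty$ whenever $(x,y) \in S$: the chain-extension trick evaluated at $z = x_*$ yields $0 = u(x_*) \ge u(x) + \langle y, x_* - x\rangle$, so $u(x) \le \langle y, x - x_*\rangle < \infty$. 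This matters because the subdifferential inequality would be vacuous (or rather contradictory, forcing $u \equiv +\infty$) at a point where $u(x) = +\infty$, and the easy-direction cancellation of the $u$-sums also quietly requires finiteness. Your telescoping computation of $u(x_*) = 0$ is sound: the closed chain evaluated at $x_*$ is $\sum_{i=0}^{N} \langle y_i, x_{i+1} - x_i \rangle$ with $x_{N+1} := x_*$, and applying the paper's cyclic inequality to the reversed $(N+1)$-tuple gives $\sum_{i=0}^{N} \langle y_i, x_i - x_{i+1}\rangle \ge 0$, which is exactly the bound you need.
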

The {\em subdifferential} $\partial u$ defined by \eqref{subdifferential}
consists of the set of $(point,slope)$ pairs for which $y$ is the slope of a
hyperplane supporting the graph of $u$ at $(x,u(x))$.

\begin{rem}[Special case ({\em monotonicity})]
Note that when $c(x,y)=-x\cdot y$ and $k=2$,
\eqref{c-cyclically monotone} implies for all
$(x_1,y_1), (x_2,y_2) \in S$ that 
\begin{equation}\label{monotone}
\langle \Delta x, \Delta y \rangle := \langle x_2-x_1,y_2-y_1 \rangle\geq 0.
\end{equation}
This condition implies that $y_2$ is constrained to lie in a halfspace with $y_1$ on its
boundary and $\Delta x$ as its inward normal.  Should $y=Du(x)$ already be known to be conservative,
the monotonicity inequalities \eqref{monotone} alone become equivalent to convexity of $u$.
\end{rem}

\subsection{Linear programming duality}
An even more useful perspective on these linear programming problems
is given by the the duality theorem discovered by Kantorovich \cite{Kantorovich42} and
Koopmans \cite{Koopmans49}
--- for which they later shared the Nobel Memorial Prize in economics. It states
that our minimization problem is equivalent to a maximization problem
\begin{equation}\label{duality}
\min\limits_{\gamma \in \Gamma(\mu^+,\mu^-)}\int_{M^+\times M^-} c \, d\gamma =
\sup \limits_{(-u,-v)\in Lip_c} -\int_{M^+}u(x) \, d\mu^+(x)-\int_{M^-}v(y) \, d\mu^-(y).
\end{equation}
Here
$$
Lip_c = \{ (u^+,u^-)\; with \; u^\pm\in L^1(M^\pm,d\mu^\pm)\;|\; c(x,y)\geq u^+(x)+u^-(y)\;\forall (x,y) \in M^+\times M^-\}.
$$

One of the two inequalities ($\ge$) in \eqref{duality} follows at once from the definition of
$-(u,v) \in Lip_c$ by integrating
$$
c(x,y) \ge - u(x) - v(y)
$$
against $\gamma \in \Gamma(\mu^+,\mu^-)$.
The magic of duality is that {\em equality} holds in \eqref{duality}.

\subsection{Game theory}
Some intuition
for why this magic works can be gleaned from the the theory of (two-player, zero-sum) games.
In that context, Player 1 chooses strategy $x \in X$,
Player 2 chooses strategy $y \in Y$, and the outcome is that
Player 1 pays $P(x,y)$ to Player 2.  The payoff function $P \in C(X \times Y)$
is predetermined and known in advance to both players;
P1 wants to minimize the resulting payment and P2 wants to maximize it.

Now, what if one the players declares his or her strategy ($x$ or $y$) to the other player
in advance?
If P1 declares first, the outcome is better for P2,  who has a chance to optimize his response
to the announced strategy $x$, and conversely.  This implies that
\begin{equation}\label{min max}
 \inf \limits_{x \in X} \sup \limits_{y \in Y} P(x,y)
\ge \sup \limits_{y\in Y}\inf \limits_{x \in X} P(x,y);
\end{equation}
(Player 1 declares first vs player 2 declares first.)
Von Neumann \cite{vonNeumann28}
identified structural conditions on the payoff function to have a
saddle point \eqref{saddle}, in which case equality holds in \eqref{duality};
see also Kakutani's reference to \cite{vonNeumann37} in \cite{Kakutani41}.

\begin{thm}[convex/concave min-max]
If $X \subset \R^m$ and $Y \subset \R^n$ are compact and convex,
then equality holds in \eqref{min max} provided for each $(x_0,y_0) \in X \times Y $ both
functions $x \in X \longmapsto P(x,y_0)$ and $y \in Y \longmapsto -P(x_0,y)$ are convex.
(In fact, convexity of all sublevel sets of both functions is enough.)
\end{thm}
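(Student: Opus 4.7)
The plan is to establish the nontrivial inequality $\inf\sup \le \sup\inf$ in \eqref{min max} by exhibiting a \emph{saddle point} $(x^*,y^*) \in X \times Y$, meaning
\begin{equation}\label{saddle plan}
P(x^*,y) \le P(x^*,y^*) \le P(x,y^*) \qquad \forall\,(x,y) \in X\times Y.
\end{equation}
Once such a point is known to exist, the equality asserted by the theorem drops out immediately: the right inequality in \eqref{saddle plan} gives $\inf_x P(x,y^*) = P(x^*,y^*)$, and the left gives $\sup_y P(x^*,y) = P(x^*,y^*)$, so
$$\sup_{y}\inf_{x} P(x,y) \;\ge\; \inf_{x} P(x,y^*) = P(x^*,y^*) = \sup_{y} P(x^*,y) \;\ge\; \inf_{x}\sup_{y} P(x,y),$$
which together with the easy inequality \eqref{min max} yields equality.

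The tool to produce the saddle point is Kakutani's fixed point theorem applied to the best-response correspondence
$$F(x,y) := B_1(y) \times B_2(x) \subset X \times Y, \qquad B_1(y) := \mathop{\rm arg\,min}_{x' \in X} P(x',y),\ \ B_2(x) := \mathop{\rm arg\,max}_{y' \in Y} P(x,y').$$
A fixed point $(x^*,y^*) \in F(x^*,y^*)$ is exactly a saddle point. Kakutani requires $F$ to be a correspondence on the compact convex set $X \times Y$ with nonempty, compact, convex values and closed graph. Nonemptiness and compactness of $B_1(y)$ and $B_2(x)$ follow from continuity of $P$ together with compactness of $X$ and $Y$. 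The closed-graph property is a standard limiting argument: if $(x_k,y_k) \to (x,y)$ and $\xi_k \in B_1(y_k)$ with $\xi_k \to \xi$, then passing to the limit in $P(\xi_k,y_k) \le P(x',y_k)$ for each $x' \in X$ shows $\xi \in B_1(y)$, and similarly for $B_2$.

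The essential point, and where the convexity hypothesis is used, is convexity of the \emph{values} of $F$. The argmin set $B_1(y)$ is a sublevel set of $P(\cdot,y)$ at the minimum value of $P(\cdot,y)$; thus convexity of all sublevel sets of $x \mapsto P(x,y_0)$ forces $B_1(y_0)$ to be convex. The same argument applied to $-P(x_0,\cdot)$ — whose sublevel sets are the superlevel sets of $P(x_0,\cdot)$ — makes $B_2(x_0)$ convex. This explains why the hypothesis is stated at the level of sublevel sets rather than at the level of the functions themselves: only quasi-convexity/quasi-concavity of $P$ in each variable is actually required.

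I expect the fixed-point application to be the real conceptual content; the verifications above are routine once the correspondence $F$ is written down. A reader wishing to avoid Kakutani can instead deduce the min-max equality from a Hahn-Banach separation applied to the convex set $\{(P(x,y_1),\ldots,P(x,y_N)) : x \in X\} + \R_+^N$ in $\R^N$ after an approximation reducing $Y$ to a finite set, or via a KKM/Ky Fan intersection argument; the choice is one of taste, but the convexity of the argmin/argmax sets is the indispensable ingredient in every approach.
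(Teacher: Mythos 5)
Your proof is correct, but takes a genuinely different route from the paper's. The paper applies Brouwer's fixed point theorem to the \emph{single-valued} composition $y_b\circ x_b:Y\to Y$ of best responses; to make the best responses single-valued and continuous it first assumes \emph{strict} convexity and concavity, and then removes that assumption at the end by perturbing the payoff to $P_\epsilon(x,y)=P(x,y)+\epsilon(|x|^2-|y|^2)$ and letting $\epsilon\to 0$. You instead apply Kakutani's theorem directly to the set-valued best-response correspondence $F(x,y)=B_1(y)\times B_2(x)$ on $X\times Y$, which eliminates both the strictness hypothesis and the perturbation step, at the cost of verifying the hypotheses of a slightly heavier fixed-point theorem (nonempty compact convex values, closed graph). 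Your route actually buys something concrete here: the perturbation $\epsilon(|x|^2-|y|^2)$ turns a convex function into a strictly convex one, but it does \emph{not} turn a quasi-convex function into a convex one, so the parenthetical claim in the statement --- that convexity of all sublevel sets suffices --- is not really covered by the paper's perturbation argument, whereas it falls out immediately from your observation that $B_1(y)$ is a sublevel set of $P(\cdot,y)$ at its minimum value (and $B_2(x)$ a superlevel set of $P(x,\cdot)$). In this sense your Kakutani argument is the more honest proof of the theorem as actually stated. The further alternatives you mention (finite-dimensional separation after approximating $Y$ by a finite set, or a KKM/Ky Fan intersection argument) are also standard and would work; the paper's choice is the Brouwer-plus-perturbation route.
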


\begin{proof}
Let
$$
 x_b(y) \in \arg\min_{x \in X} P(x,y) \qquad y_b(x) \in \arg\max_{y \in Y} P(x,y)
$$
denote the best responses of P1 and P2 to each other's strategies $y$ and $x$.
Note $x_b$ and $y_b$ are continuous if the convexity and concavity assumed of
the payoff function are both strict.  In that case,  Brouwer's theorem
asserts the the function $y_b \circ x_b: Y \to Y$ has a fixed point $y_0$.
Setting $x_0 = x_b(y_0)$,  since $y_0=y_b(x_0)$ we have found a saddle point
\begin{equation}\label{saddle}
\inf_{x \in X} \max_{y \in Y} P(x,y) \le
\max_{y \in Y} P(x_0,y) = P(x_0,y_0) =
\min_{x \in X} P(x, y_0) \le
\sup_{y \in Y} \min_{x \in X} P(x, y)
\end{equation}
of the payoff function, which proves equality holds in \eqref{min max}.
If the convexity and concavity of the payoff function are not strict,
apply the theorem just proved to the
perturbed payoff $P_\epsilon(x,y) = P(x,y) + \epsilon (|x|^2 - |y|^2)$
and take the limit $\epsilon \to 0$.
\end{proof}

e.g. Expected payoff \cite{vonNeumann28}:  von Neumann's original example of a function
to which the theorem and its conclusion applies
is the expected payoff $P(x,y) = \sum_{i=1}^m \sum_{j=1}^n p_{ij} x_i y_j$ of
{\em mixed} or {\em randomized} strategies $x$ and $y$
for a game in which P1 and P2 each have only finitely
many pure strategies, and the payoff corresponding to strategy
$1 \le i \le m$ and $1\le j\le n$  is $p_{ij}$.  In this case
$X = \{ x \in [0,1]^m \mid \sum x_i =1\}$ and $Y = \{ y \in [0,1]^n \mid \sum y_j = 1\}$
are standard simplices of the appropriate dimension, whose vertices correspond to
the pure strategies.

\subsection{Relevance to optimal transport: Kantorovich-Koopmans duality}
Infinite dimensional versions of von-Neumann's theorem can also be formulated
where $X$ and $Y$ lie in Banach spaces;  they are proved using Schauder's fixed point theorem
instead of Brouwer's.  A payoff function germane to optimal transportation is defined on the
strategy spaces $X = \{ 0 \le \gamma \; on \; M^+\times M^-\}$ and
$Y = \{ (u,v) \in L^1(M^+,\mu^+) \oplus L^1(M^-,\mu^-)\}$ by
$$
P(\gamma, (u,v))= \int_{M^+\times M^-}\left (c(x,y)+u(x)+v(y) \right ) d\gamma(x,y)-\int_{M^+}ud\mu^+ -\int_{M^-}vd\mu^-.
$$
Note the bilinearity of $P$ on $X \times Y$.  Since
$$
\inf_{\gamma \in X} P(\gamma,(u,v)) = \left \{ \begin{array}{ll}
-\infty & \mbox{ unless } (-u,-v) \in Lip_c,\\
0 -\int u d\mu^+ -\int vd\mu^- & \mbox{ otherwise,}
\end{array} \right.$$
the Kantorovich-Koopmans dual problem is recovered from the version of the game in which
P2 is compelled to declare his strategy first:
\begin{equation}\label{dual}
\sup_{(u,v)\in Y} \inf \limits_{\gamma \in X} P(\gamma,(u,v)) = \sup \limits_{(-u,-v) \in Lip_c} \int (-u) d\mu^+ +\int (-v) d\mu^-.
\end{equation}
On the other hand, rewriting
$$
P(\gamma,(u,v)) = \int_{M^+ \times M^-}cd\gamma+\int u (d\gamma - d\mu^+)+\int v(y)(d\gamma(x,y)-d\mu^-(y))
$$
we see
\begin{equation}\label{primal}
\sup \limits_{(u,v)\in Y} P(\gamma,(u,v)) = \left \{ \begin{array}{ll}
+\infty & \mbox{ unless } \gamma \in \Gamma(\mu^+,\mu^-)\\
\int_{M^+ \times M^-} c \, d\gamma & \mbox{ if } \gamma \in \Gamma(\mu^+,\mu^-).
\end{array} \right.
\end{equation}
Thus the primal transportation problem of Kantorovich and Koopmans
$$\inf \limits_{\gamma \in X}\sup \limits_{(u,v)\in Y} P(\gamma,(u,v))=\inf \limits_{\gamma \in \Gamma(\mu^+,\mu^-)} \int_{M^+ \times M^- } c \,d\gamma
$$
corresponds to the version of the game in which P1 declares his strategy first.
The equality between \eqref{dual} and \eqref{primal} asserted by an appropriate generalization
of von Neumann's theorem implies the duality \eqref{duality}:
$$
\min \limits_{\gamma \in \Gamma(\mu^+,\mu^-)}\int_{M^+ \times M^-}cd\gamma
= \sup \limits_{(-u,-v) \in Lip_c} \int_{M^+}(-u)d\mu^+ + \int_{M^-}(-v)d\mu^-.$$

\subsection{Characterizing optimality by duality}

The following theorem can be deduced as an immediate corollary of this duality.
We may think of the potentials $u$ and $v$ as being Lagrange multipliers enforcing
the constraints on the marginals of $\gamma$;  in the economics literature they are
interpreted as shadow prices which reflect the geographic variation in scarcity or abundance
of supply and demand.  The geography is encoded in the choice of cost.


\begin{thm}[Necessary and sufficient conditions for optimality]\label{T:zero set}
The existence of $-(u,v) \in Lip_c$ such that $\gamma$
vanishes outside the zero set of the non-negative function
$k(x,y) = c(x,y)+u(x)+v(y) \geq 0$ on $M^+ \times M^-$
is necessary and sufficient for the optimality of $\gamma \in \Gamma(\mu^+,\nu^-)$
with respect to $c \in C(M^+ \times M^-)$.
\end{thm}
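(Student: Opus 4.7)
The plan is to establish both directions of the equivalence using the Kantorovich--Koopmans duality \eqref{duality} proved just above.

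Sufficiency is essentially a direct verification. Given such $(u,v)$, for any competitor $\tilde\gamma \in \Gamma(\mu^+,\mu^-)$ the admissibility condition $c(x,y) \geq -u(x) - v(y)$ integrated against $\tilde\gamma$ together with the marginal constraints yields
$$\cost(\tilde\gamma) \ge -\int_{M^+} u\, d\mu^+ -\int_{M^-} v\, d\mu^-.$$
On the other hand, $k \ge 0$ everywhere and $\gamma$ is concentrated on $\{k=0\}$, so $\int k\, d\gamma = 0$, which unfolds via the marginals into $\cost(\gamma) = -\int u\, d\mu^+ - \int v\, d\mu^-$. Comparing these gives $\cost(\gamma) \le \cost(\tilde\gamma)$.

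Necessity is where the real work lies. Assuming $\gamma$ is optimal, duality \eqref{duality} identifies $\cost(\gamma)$ with the supremum of $-\int u\, d\mu^+ - \int v\, d\mu^-$ over $(-u,-v) \in Lip_c$; what must be shown is that this supremum is actually \emph{attained}. First I would reduce to $c$-conjugate potentials: given any admissible $(-u,-v)$, replacing $v$ by the $c$-transform
$$u^c(y) := \sup_{x \in M^+}\bigl(-c(x,y) - u(x)\bigr)$$
keeps the pair in $Lip_c$, can only improve the dual value (since $u^c \le v$ pointwise), and forces the new potential to inherit the uniform modulus of continuity of $c$. Applying the analogous reduction to $u$, one may restrict to pairs whose members are both equibounded and equicontinuous under the standing compactness hypothesis on $M^\pm$. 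Arzel\`a--Ascoli then extracts a uniform limit $(u^*,v^*) \in Lip_c$ from any maximizing sequence, giving attainment.

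Once attainment is in hand, the conclusion follows from complementary slackness. Using the marginals of $\gamma$,
$$\cost(\gamma) = -\int u^*\, d\mu^+ -\int v^*\, d\mu^- = -\int_{M^+\times M^-}\bigl(u^*(x) + v^*(y)\bigr)\, d\gamma(x,y),$$
so $\int k^*\, d\gamma = 0$ for the non-negative function $k^*(x,y) := c(x,y) + u^*(x) + v^*(y)$. Since the integrand is non-negative everywhere on $M^+\times M^-$, $\gamma$ must vanish outside $\{k^* = 0\}$, which is exactly the desired zero-set condition. The main obstacle is the attainment step: the dual supremum is taken over an unbounded subset of $L^1(\mu^+)\oplus L^1(\mu^-)$, so compactness is not free and must be manufactured through the $c$-transform construction combined with Arzel\`a--Ascoli; everything else reduces to the admissibility inequality $c \ge -u-v$ and the marginal constraint on $\gamma$.
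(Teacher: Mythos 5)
Your proof is correct and follows the same route the paper intends: the theorem is presented there as ``an immediate corollary'' of the Kantorovich--Koopmans duality \eqref{duality}, and your sufficiency direction and complementary-slackness argument are exactly what make that corollary precise. The one place where you add genuine content is the attainment step on the dual side --- \eqref{duality} only asserts a supremum, so establishing existence of a maximizing pair $(u^*,v^*)$ is indeed necessary, and your $c$-transform plus Arzel\`a--Ascoli argument is the same compactness device the paper itself deploys a little later in the sketch of Theorem \ref{T:GangboLevin}. Two small points worth tightening: (i) your ``equibounded'' claim needs the usual normalization (e.g.\ subtracting $u(x_0)$ from $u$ and adding it to $v$), since the dual value is invariant under such shifts and without normalizing the family is unbounded; and (ii) notationally the paper writes $u^{\tilde c}$ for the transform taken in the $y$-variable, reserving $v^c$ for the $x$-variable transform, so your $u^c$ should really be $u^{\tilde c}$ to match Definition \ref{c-convex}. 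Neither affects the substance.
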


\begin{cor}[First and second order conditions on potentials]\label{C:foc}
Optimality of $\gamma$ and $(u,v)$ implies $Dk = 0$ and $D^2 k \ge 0$ at any point
$(x,y) \in \spt \gamma$ where these derivatives exist.
In particular, $D_x[c(x,y)+u(x)+v(y)]=0$ and $D^2_x[c(x,y)+u(x)+v(y)]\geq 0$ holds $\gamma$-a.e.,
and likewise for $y$-derivatives.
\end{cor}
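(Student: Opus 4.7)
The plan is to deduce the corollary directly from Theorem~\ref{T:zero set}. That theorem, together with the dual feasibility $-(u,v) \in Lip_c$, tells us that $k(x,y) := c(x,y)+u(x)+v(y)$ is nonnegative everywhere on $M^+ \times M^-$ and vanishes on a Borel set carrying the full mass of $\gamma$. Since $c$, $u$, and $v$ are continuous, $k$ is continuous, so its zero set is closed and hence contains $\spt \gamma$. Consequently, every point of $\spt \gamma$ is a \emph{global minimum} of $k$ on $M^+ \times M^-$.

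The stated conclusions are then the standard first- and second-order necessary conditions at an interior minimum. If $(x_0,y_0) \in \spt \gamma$ lies in the interior of $M^+ \times M^-$ and $k$ is differentiable there, the one-sided Taylor expansion $0 \le k(x_0+h,y_0+j) = \langle Dk(x_0,y_0),(h,j)\rangle + o(|(h,j)|)$ forces $Dk(x_0,y_0) = 0$; a second-order expansion under twice-differentiability forces $D^2 k(x_0,y_0) \ge 0$. The ``in particular'' assertion follows by restricting $k$ to the coordinate slices through $(x_0,y_0)$: the function $x \mapsto k(x,y_0)$ remains nonnegative and vanishes at $x_0$, so attains a global minimum there, and likewise for $y \mapsto k(x_0,y)$. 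Since $v(y_0)$ is constant in $x$, the condition $D_x k = 0$ is exactly $D_x[c(x,y)+u(x)+v(y)]=0$, and analogously for the Hessian in $x$ and for the $y$-variable derivatives.

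Upgrading the pointwise assertion to a $\gamma$-a.e.\ statement requires knowing that the relevant derivatives actually exist at $\gamma$-a.e.\ point. This is the only real subtlety, and it rests on two inputs developed elsewhere in the theory: that $u$ and $v$ may be replaced by their $c$-transforms without destroying optimality, thereby inheriting Lipschitz or semiconcavity moduli from $c$; and that the marginals of $\gamma$ are $\mu^\pm$, so that $\mu^+$-a.e.\ differentiability of $u$ (supplied by Rademacher's theorem for Lipschitz data, or Alexandrov's theorem in the semiconvex setting when $c \in C^2$) transfers to $\gamma$-a.e.\ differentiability of $k$ in the $x$-slice, and symmetrically in $y$. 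The main obstacle is therefore not the minimum principle, which is essentially a one-line observation once Theorem~\ref{T:zero set} is in hand, but rather assembling the regularity package required to make the $\gamma$-a.e.\ qualifier meaningful.
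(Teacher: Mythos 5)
Your argument is correct and is precisely the reasoning the paper intends: the paper gives no written proof of Corollary~\ref{C:foc}, presenting it as an immediate consequence of Theorem~\ref{T:zero set}, and your chain (nonnegativity of $k$, vanishing on a set of full $\gamma$-measure, continuity forcing $\spt\gamma$ into the closed zero set, hence first- and second-order necessary conditions at a global minimum) is exactly how that deduction goes. Your closing remarks on what underwrites the ``$\gamma$-a.e.'' qualifier --- replacing $(u,v)$ by $c$-transforms to inherit regularity from $c$, then invoking Rademacher or Alexandrov and the marginal condition --- accurately locate the only real subtlety, and match what the paper develops afterward in Lemma~\ref{L:Lipschitz} and the proof of Theorem~\ref{T:GangboLevin}.
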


e.g. Consider the special case of the bilinear cost: $c(x,y)=-x\cdot y$.
Here the first and second order conditions of the corollary become
$$
y = Du(x) \qquad {\rm and} \qquad D^2u(x) \ge 0,
$$
suggesting $y$ is the graph of the gradient of a convex function.
In this case, convexity of $u$ guarantees $D^2u$ is defined a.e. with respect to Lebesgue measure,
by Alexandrov's theorem.

\subsection{Existence of optimal maps and uniqueness of optimal measures}

More generally, we claim $u$ inherits Lipschitz and semiconvexity bounds
\eqref{locally Lipschitz}--\eqref{semiconvex} from $c(x,y)$, which guarantee the
existence of $x$-derivatives in the preceding corollary --- at least Lebesgue almost everywhere.
This motivates the following theorem of Gangbo \cite{Gangbo95} and Levin \cite{Levin99};
variations appeared independently in Caffarelli \cite{Caffarelli96},
Gangbo and McCann \cite{GangboMcCann95} \cite{GangboMcCann96}, and R\"uschendorf
\cite{Ruschendorf95} \cite{Ruschendorf96} at around the same time,
and subsequently in  
\cite{MaTrudingerWang05}.

\begin{DEF}[Twist conditions]
A function $c \in C(M^+ \times M^-)$ differentiable with respect to
$x \in M^+$ is said to be {\em twisted} if
\begin{equation}\label{Aone}
\Aone^+ \quad 
\forall\ x_0 \in M^+, {\rm\ the\ map}\
    y \in M^- \longmapsto D_x c(x_0,y) \in T^*_x M^+ \mbox{\rm\ is\ one-to-one}.
\end{equation}
For $(x,p) \in T^*M^+$ denote the unique $y \in M^-$ solving $D_x c(x,y) + p =0 $ by
$y=Y(x,p)$ when it exists.  When the same condition holds for the cost
$\tilde c(y,x) := c(x,y)$, we denote it by $\Aone^-$.  When both $c$ and $\tilde c$
satisfy $\Aone^+$,  we say the cost is {\em bi-twisted}, and denote this by
\Aone.
\end{DEF}

\begin{thm}[Existence of Monge solutions; uniqueness of Kantorovich solutions]
\label{T:GangboLevin}
    Fix Polish probability spaces $(M^\pm,\mu^\pm)$
    and assume $M^+$ is a $n$-dimensional manifold and $d\mu^+ \ll d^n x$ is absolutely
    continuous (in coordinates).  Let $c \in C(M^+ \times M^-)$ differentiable with
    respect to $x\in M^+$ satisfy the twist condition \eqref{Aone} and assume
    $D_x c(x,y)$ 
    is bounded locally in $x \in M^+$ uniformly in $y \in M^-$.
Then, there exists a locally Lipschitz (moreover, $c$-convex, as in Definition \ref{c-convex})
function $u: M^+ \to \R$ 
such that

    a) $G(x):=Y(x,Du(x))$ pushes $\mu^+$ forward to $\mu^-$;\\

    b) this map is unique, and uniquely solves Monge's minimization problem \eqref{Monge work};\\

    c) Kantorovich's minimization \eqref{Kantorovich} has a unique solution $\gamma$;\\

    d) $\gamma = (id \times G)_\#\mu^+$.

\end{thm}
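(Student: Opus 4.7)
The plan is to build the Monge solution from a Kantorovich minimizer and its dual potentials. First, weak-$*$ compactness of $\Gamma(\mu^+,\mu^-)$ and continuity of $c$ give existence of some optimal $\gamma \in \Gamma(\mu^+,\mu^-)$. By the duality \eqref{duality} and Theorem \ref{T:zero set}, I can produce a pair $-(u,v)\in Lip_c$ such that $\spt \gamma$ lies inside the zero set of the nonnegative function $k(x,y)=c(x,y)+u(x)+v(y)$. I will replace $u$ by its $c$-transform $u(x)=\sup_{y\in M^-}\bigl[-v(y)-c(x,y)\bigr]$, so that $u$ is $c$-convex in the sense of Definition \ref{c-convex}; this preserves admissibility in $Lip_c$ and the zero-set property, while allowing $u$ to inherit a \emph{local} Lipschitz bound from the hypothesis that $D_x c(x,y)$ is bounded locally in $x$ uniformly in $y$.

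Next I exploit the first-order condition. Since $u$ is locally Lipschitz and $\mu^+\ll d^n x$, Rademacher's theorem gives differentiability of $u$ on a set of full $\mu^+$-measure. At any $(x_0,y_0)\in\spt\gamma$ with $x_0$ a point of differentiability, the function $x\mapsto k(x,y_0)$ attains its minimum $0$ at $x_0$, so Corollary \ref{C:foc} yields $D_x c(x_0,y_0)+Du(x_0)=0$. The twist condition \Aone${}^+$ then forces $y_0=Y(x_0,Du(x_0))$. Defining $G(x):=Y(x,Du(x))$ on the set where $Du$ exists, I conclude that $\gamma$ is concentrated on the graph of $G$. Taking marginals gives $\gamma=(id\times G)_\#\mu^+$, which proves (d); projecting the right marginal gives $G_\#\mu^+=\mu^-$, which proves (a).

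For uniqueness of Kantorovich solutions (c), if $\gamma_0$ and $\gamma_1$ were two optimal plans then so would be $\gamma_{1/2}=\tfrac12(\gamma_0+\gamma_1)$, and the argument above applied to $\gamma_{1/2}$ produces a (possibly new) map $\tilde G$ such that $\gamma_{1/2}$ is concentrated on the graph of $\tilde G$; since $\gamma_0,\gamma_1 \le 2\gamma_{1/2}$ they are also concentrated on $\mathrm{graph}\,\tilde G$, and having identical marginals $\mu^+$ forces $\gamma_0=\gamma_1=(id\times\tilde G)_\#\mu^+$. Part (b) is then immediate: any Monge competitor $\tilde G$ with $\tilde G_\#\mu^+=\mu^-$ yields a Kantorovich plan $(id\times\tilde G)_\#\mu^+$ of the same cost \eqref{Monge cost}, which by uniqueness in (c) must equal $(id\times G)_\#\mu^+$, so $\tilde G=G$ $\mu^+$-a.e.

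The main obstacle is establishing the local Lipschitz regularity of (the $c$-transform) $u$ from the bare hypothesis that $D_x c$ is locally bounded in $x$ uniformly in $y$: this requires a careful mean-value estimate, taking advantage of the fact that in the supremum defining $u(x)$ one can localize $y$ to a set where the envelope is actually attained, or to a dense subfamily of near-maximizers, so that the $x$-variation of each competitor $-v(y)-c(\cdot,y)$ is controlled by the same local bound on $|D_x c|$. Once this regularity is in hand, the remainder of the argument is essentially algebraic, driven by the twist assumption and the basic measure-theoretic fact that a joint measure concentrated on the graph of a Borel map, with a prescribed left marginal, is uniquely determined.
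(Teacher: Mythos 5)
Your proposal is correct and follows the same architecture as the paper's sketch: dual potentials, $c$-transform to enforce $c$-convexity, Lemma~\ref{L:Lipschitz} plus Rademacher for a.e.\ differentiability, the first-order condition from Corollary~\ref{C:foc}, and the twist condition to pin down $y$. The one substantive difference is the entry point: you start from an optimal $\gamma$ and pull the pair $-(u,v)\in Lip_c$ out of Theorem~\ref{T:zero set}, whereas the paper constructs the dual maximizer directly by passing a maximizing sequence through the $c$-transform trick ($(u_k,v_k)\mapsto(v_k^c,v_k^{c\tilde c})$) and invoking Arzela--Ascoli with Lemma~\ref{L:Lipschitz}. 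Since Theorem~\ref{T:zero set} is stated only as a corollary of the duality identity \eqref{duality}, and its ``necessary'' direction quietly presumes attainment of the dual supremum, you are in effect black-boxing exactly the compactness argument that the paper's proof is supplying; this is not wrong, but it shifts the technical burden rather than discharging it. Your midpoint trick for uniqueness ($\gamma_0,\gamma_1\le 2\gamma_{1/2}$, each concentrated on $\mathrm{graph}\,\tilde G$) is a clean and standard alternative to the paper's phrasing that ``$\gamma$ is uniquely determined by $u$,'' and both are fine; likewise your reduction of Monge uniqueness (b) to Kantorovich uniqueness (c) matches the paper.
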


\begin{DEF}[c-convex]\label{c-convex}
A function $u:M^+ \longrightarrow \R \cup \{+\infty\}$ (not identically
infinite) is {\em c-convex} if and only if $u=(u^{\tc})^c$,  where
\begin{equation}\label{c-transform}
u^{\tc}(y)= \sup \limits_{x \in M^+} -c(x,y)-u(x)
\qquad {\rm and} \qquad v^c(x)= \sup \limits_{y \in M^-} -c(x,y)-v(y).
\end{equation}
\end{DEF}

\begin{rem}[Legendre-Fenchel transform and convex dual functions]\label{Legendre-Fenchel}
When $c(x,y) = -\langle x,y \rangle$,  then $u^\tc(y)$ is manifestly convex: it is the
Legendre-Fenchel transform or convex dual function of $u(x)$.  In this case,
$(u^\tc)^c$ is well-known to yield the lower semicontinuous convex hull of the graph of $u$,
so that $u=(u^\tc)^c$ holds if and only if $u$ is already lower semicontinuous and convex.
More generally,  we interpret the condition $u=u^{\tc c}$ as being the correct adaptation of the
notion of convexity to the geometry of the cost function $c$.
\end{rem}

\begin{proof}[Sketch of proof of Theorem \ref{T:GangboLevin}]
The key idea of the proof is to establish existence of a
maximizer $-(u,v) \in Lip_c$ of \eqref{duality}
with the additional property that $(u,v)=(v^c,u^\tc)$.
Differentiability of $u=u^{\tc c}$ on a set $\dom Du$ of full
$d\mu^+ \ll d^n x$ measure then follows from Rademacher's theorem and Lemma \ref{L:Lipschitz}.
The map $G(x) := Y(x,Du(x))$ is well-defined on $\dom Du$ by the twist condition
(assuming the supremum \eqref{c-transform} defining $(u^\tc)^c(x)$ is attained).
Corollary \ref{C:foc} shows any minimizer $\gamma$ vanishes outside the graph of this map,
and it then follows easily that $\gamma = (id \times G)_\# \mu^+$ and hence $\gamma$ is uniquely
determined by $u$ \cite{AhmadKimMcCann09p}.
Conversely any other $c$-convex $\tilde u$ for which
$\tilde G(x) = Y(x,D \tilde u(x))$ pushes $\mu^+$ forward to $\mu^-$ can be shown to
maximize the dual problem by checking that $\tilde \gamma = (id \times \tilde G)_\# \mu^+$
vanishes outside the support of $\tilde G$.  Thus $\tilde \gamma = \gamma$ and
$\tilde G = G$ holds $\mu^+$-a.e.

To extract the desired $-(u,v) \in Lip_c$ from a maximizing sequence $-(u_k,v_k)$
requires some compactness. (This would come from the convexity of $u_k$ and $v_k$
in case $c(x,y) = - x \cdot y$ via the Blaschke selection theorem.)
Observe $-(u,v)\in Lip_c$ implies
$$u(x) \ge \sup \limits_{y \in M^-} -c(x,y)-v(y)=:v^c(x).$$
Moreover, $-(v^c,v) \in Lip_c$ and $-(v^c) \ge -u$ can only increase the value of the objective
functional relative to $-(u,v)$.  Thus $-(v^c,v)$ is a better candidate for a maximizer
than $-(u,v)$. Repeating the process shows $-(v^c,v^{c \tc})$
and $-(v^{c \tc c}, v^{c \tc}) \in Lip_c$ are better still,  since $(v^c)^\tc \le v$ and
$(v^{c \tc})^c \le v^c$ by the same logic.
On the other hand, starting from $v^{c \tc} \le v$,
the negative coefficient in definition \eqref{c-transform}
implies the opposite inequality $(v^{c \tc})^c \ge v^c$.
Thus $v^{c \tc c} = v^c$ quite generally.  (This is precisely analogous to the fact that
the second Legendre transform $u^{**}$ does not change a function $u=v^*$ which is already convex and
lower semicontinuous;  see Remark \ref{Legendre-Fenchel}.)

Replacing a maximizing sequence $-(u_k,v_k)$ with $-(v_k^c,v_k^{c \tc})$ therefore yields a new
maximizing sequence at least as good which moreover consists of $c$-convex functions.
Lemma \ref{L:Lipschitz} shows this new family is locally equi-Lipschitz,  hence
we only need local boundedness for the Arzela-Ascoli theorem to yield a limiting maximizer $-(u,v)$,
which will in fact be $c$-convex,  though we can also replace it by $-(v^c, v^{c \tc})$ just
to be sure.  Local boundedness also follows from Lemma \ref{L:Lipschitz}, after fixing
$x_0 \in \spt \mu^+ \subset M^+$ and
replacing $-(u_k,v_k)$ by $-(u_k - \lambda_k, v_k + \lambda_k)$ with $\lambda_k = u_k(x_0)$.
This replacement does not change the value of the objective functional \eqref{dual},
yet ensures that $u(x_0)=0$.
\end{proof}

\section{Methods for obtaining regularity of optimal mappings}

Given mines and factories $(M^\pm,\mu^\pm)$ and a cost function $c \in C(M^+\times M^-)$,
in the preceding section we found conditions which guarantee the existence and uniqueness
of a map $G(x)=Y(x,Du(x))$ such that $G_\#\mu^+=\mu^-$ with $u=u^{\tc c}$, ie.\ $c$-convex.
Under the same conditions,  the map $G$ is the unique minimizer of Monge's problem
\eqref{Monge work}.
The space $M^+$ was assumed to be an $n$-dimensional manifold, and the following twist
hypothesis $\Aone^+$, equivalent to \eqref{Aone}, was crucial to specifying $Y(x,\cdot)$:
\begin{equation}\label{topological twist}
\forall\ y_1 \ne y_2 \in M^- {\rm\ assume}\ x\in M^+ \longmapsto c(x,y_1)-c(x,y_2) {\rm\ has\ no\ critical\ points.}
\end{equation}
Notice, however, that \eqref{topological twist} cannot be satisfied by any cost function which is
{\em differentiable}  throughout a compact manifold $M^+$.
In case $M^+ = \sphere^n$,  Monge solutions do not generally exist \cite{GangboMcCann00},
but criteria are given in \cite{ChiapporiMcCannNesheim10} \cite{AhmadKimMcCann09p} which
guarantee uniqueness of the Kantorovich minimizer.  On the other hand, it is an interesting
open problem to find a criterion on $c \in C^1(M^+ \times M^-)$ which guarantees uniqueness
of Kantorovich solutions for all $\mu^\pm \in L^1(M^\pm)$ in more complicated topologies,
such as the torus $M^\pm = \torus^n$ for example.
Here differentiability of the cost function is crucial;
for costs such as Riemannian distance squared,  the desired uniqueness is
known \cite{Cordero-Erausquin99T} \cite{McCann01}, but the cost fails to be differentiable
at the cut locus.

\subsection{Rectifiability: differentiability almost everywhere}
The current section is devoted to reviewing methods for exploring the smoothness properties
of the optimal map $G$ found above, or equivalently of its $c$-convex potential $u$.
The following lemma shows that all $c$-convex functions inherit Lipschitz and semiconvexity properties directly
from the cost function $c$; it has already been exploited to prove Theorem \ref{T:GangboLevin}.

\begin{lem}[Inherent regularity of $c$-convex functions]\label{L:Lipschitz}
If $u=u^{\tc c}$ and $c(\cdot,y) \in C^k_{loc}(M^+)$ for each $y \in M^-$,
then $k=1$ implies \eqref{locally Lipschitz} and $k=2$ implies \eqref{semiconvex}:
\begin{align}\label{locally Lipschitz}
|Du(x)| &\leq \sup \limits_{y\in M^-} |D_x c(x,y)|  &{\rm (local\ Lipschitz\ regularity);} \\
\label{semiconvex}
D^2u(x) &\geq \inf \limits_{y \in M^-} -D^2_{xx}c(x,y) &{\rm (semiconvexity).}
\end{align}
\end{lem}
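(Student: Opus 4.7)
The plan is to exploit the defining representation
$$
u(x) = u^{\tc c}(x) = \sup_{y \in M^-}\bigl[-c(x,y)-u^{\tc}(y)\bigr] = \sup_{y \in M^-} f_y(x),
$$
which realizes $u$ as the pointwise supremum over $y$ of the parametric family $f_y(\cdot) := -c(\cdot,y)-u^{\tc}(y)$. Both bounds will follow from the general principle that such a supremum inherits pointwise regularity from its individual members.

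For the local Lipschitz bound \eqref{locally Lipschitz}, I fix a compact neighborhood $K$ of an arbitrary $x \in M^+$ and set $L := \sup_{z \in K,\, y \in M^-} |D_z c(z,y)|$, which is finite by hypothesis. Each $f_y$ is $C^1$ in the first argument with $|Df_y(z)| = |D_z c(z,y)| \leq L$ on $K$, so $f_y$ is $L$-Lipschitz on $K$. The elementary inequality
$$
u(z)-u(z') \;\leq\; \sup_{y \in M^-}\bigl[f_y(z)-f_y(z')\bigr] \;\leq\; L\,|z-z'|
$$
transfers this constant to $u$. By Rademacher's theorem $u$ is differentiable a.e.\ on $K$ with $|Du| \leq L$, and shrinking $K$ around $x$ delivers \eqref{locally Lipschitz}.

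For the semiconvexity bound \eqref{semiconvex}, the $C^2$ hypothesis permits the choice of a finite $\lambda = \lambda(K)$ with $\|D^2_{zz} c(z,y)\| \leq \lambda$ throughout $K \times M^-$. Then each $f_y + \tfrac{\lambda}{2}|\cdot|^2$ has nonnegative Hessian on $K$ and so is convex there; taking the supremum preserves convexity, so $u + \tfrac{\lambda}{2}|\cdot|^2$ is convex on $K$. Hence $u$ is locally $\lambda$-semiconvex, and by Alexandrov's theorem admits a classical second-order Taylor expansion almost everywhere. To sharpen the crude bound $D^2 u \geq -\lambda I$ into \eqref{semiconvex} at such a point $x$, I repeat the convexity argument on a smaller neighborhood $K_\delta$ of $x$ with $\lambda I$ replaced by the symmetric matrix $\delta I - A_x$, where $A_x := \inf_{y \in M^-}\bigl[-D^2_{xx} c(x,y)\bigr]$ is interpreted in the symmetric-matrix ordering and $\delta > 0$ is arbitrary; continuity of $D^2_{zz} c$ uniformly in $y$ lets $K_\delta$ be chosen so that $-D^2_{zz} c(z,y) \geq A_x - \delta I$ throughout $K_\delta \times M^-$. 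Applying the convexification principle yields $D^2 u(x) \geq A_x - \delta I$ at the Alexandrov point $x$, and sending $\delta \to 0$ gives \eqref{semiconvex}.

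The main obstacle is this last localization step. When $M^-$ is compact---the tacit setting of these lectures---continuity of $D^2_{zz} c$ on the product $M^+ \times M^-$ automatically furnishes the uniform-in-$y$ modulus of continuity required, so the sharp pointwise Hessian bound follows at once from the convexification trick. In the non-compact regime one must supplement the bare $C^2_{loc}$ hypothesis with a uniform-in-$y$ continuity statement on $D^2_{zz} c$ (mirroring the uniform bound on $D_x c$ already needed for the Lipschitz bound). The first conclusion \eqref{locally Lipschitz} is by comparison a routine inheritance-under-suprema fact that requires no such care.
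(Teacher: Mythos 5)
The paper states this lemma without giving a proof (it is cited in the sketch of Theorem \ref{T:GangboLevin} as a known fact), so there is no official argument to compare against; I am judging your proposal on its own merits.

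Your Lipschitz estimate is clean: the supremum of a uniformly Lipschitz family is Lipschitz with the same constant, Rademacher gives differentiability a.e., and shrinking $K$ sharpens the constant toward the pointwise supremum. (You correctly note this last limit needs compactness of $M^-$ or a uniform-in-$y$ modulus of continuity for $D_z c$.) For the semiconvexity bound, however, there is a genuine gap in the ``matrix infimum'' step. You set $A_x := \inf_{y}[-D^2_{xx}c(x,y)]$ ``interpreted in the symmetric-matrix ordering'' and then want $-D^2_{zz}c(z,y) \geq A_x - \delta I$ on $K_\delta \times M^-$. But symmetric matrices under the PSD order do not form a lattice: a family of symmetric matrices in general has no greatest lower bound, and even when matrix lower bounds exist, none of them realizes the direction-by-direction scalar infimum. (Take $\{ {\rm diag}(1,-1), {\rm diag}(-1,1)\}$: the function $v \mapsto \inf_y [-v^T D^2_{xx}c\, v] = -|\cos 2\theta|$ is not a quadratic form, so no matrix $A_x$ achieves it.) As written, your argument therefore proves $D^2u(x) \geq A_x$ only for whatever particular lower bound $A_x$ you happen to pick, which can be strictly weaker than the intended conclusion \eqref{semiconvex}, whose correct reading is the scalar inequality $v^T D^2 u(x)\, v \geq \inf_{y}\bigl[-v^T D^2_{xx}c(x,y)\, v\bigr]$ for every direction $v$.

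The repair is either of two short moves. One is to run your convexification argument one direction at a time: fix a unit vector $v$, restrict $u$ and each $f_y$ to the line $t \mapsto x + tv$, convexify the scalar functions $t \mapsto f_y(x+tv)$ by a quadratic in $t$ alone, and conclude at an Alexandrov point that $v^T D^2 u(x)\, v \geq \inf_y[-v^T D^2_{xx}c(x,y)v]$, with no reference to a matrix $A_x$. The other --- and arguably the cleaner route, which also tightens your Lipschitz step --- is the envelope (tangency) argument: assuming the supremum defining $u(x)$ is attained at some $y_0 \in M^-$ (automatic when $M^-$ is compact and $u^{\tc}$ is lower semicontinuous, which holds for $u = u^{\tc c}$), the inequality $u \geq f_{y_0}$ with equality at $x$ forces $Du(x) = -D_x c(x,y_0)$ at any point of differentiability, which yields \eqref{locally Lipschitz} immediately, and $D^2 u(x) \geq -D^2_{xx}c(x,y_0)$ at any Alexandrov point, which yields \eqref{semiconvex} upon taking the infimum over $y$ direction by direction. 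One still uses the crude uniform Lipschitz and semiconvexity bounds (exactly as in your first step) to invoke Rademacher and Alexandrov, but the sharpened pointwise constants then come for free from first- and second-order tangency, without the shrinking-neighborhood limit or any uniform modulus of continuity for $D^2_{zz}c$.
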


Similarly,  $c$-cyclically monotone sets $S \subset M^+ \times M^-$ turn out to
be contained in Lipschitz submanifolds of dimension $n = dim M^\pm$ when the cost
function is non-degenerate \eqref{Atwo}.  The following recent theorem of
McCann-Pass-Warren \cite{McCannPassWarren10p} combines with Rademacher's theorem
--- which asserts the differentiability Lebesgue a.e.\ of Lipschitz functions ---
to give a simple tool for establishing that $f^+(x)=|\det(DG(x))|f^-(G(x))$ holds $f^+$-a.e.\\

\begin{thm}[Rectifiability of optimal transport \cite{McCannPassWarren10p}]
\label{T:McCannPassWarren}
Assume $M^\pm$ are $n$-dimensional manifolds, at least in a neighbourhood $U$ of
$(x_0,y_0) \in M^+ \times M^-$, where $c \in C^2(U)$ and
\begin{equation}\label{Atwo}
\Atwo\;\; det D^2_{x^iy^j}c(x_0,y_0) \neq 0.
\end{equation}
If $S \subset M^+\times M^-$ is $c$-cyclically monotone,
then $S \cap V$ lies in an $n$-dimensional Lipschitz submanifold,  for some neighbourhood
$V \subset U$ of $(x_0,y_0)$.
\end{thm}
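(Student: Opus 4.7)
The plan is to reduce the problem to the classical fact that the graph of the subdifferential of a convex function on $\R^n$ is a Lipschitz $n$-manifold, via a $C^1$-diffeomorphism dictated by (A2). The strategy proceeds in three steps.

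First, I would produce a $c$-convex potential $u$ on $M^+$ whose $c$-subdifferential
\[
\partial^c u := \{(x,y) \mid u(z)+c(z,y) \geq u(x)+c(x,y) \text{ for all } z\in M^+\}
\]
contains $S \cap V$ for some neighborhood $V$ of $(x_0,y_0)$. This is achieved by a local version of the $c$-analog of Rockafellar's Theorem \ref{T:Rockafellar} due to R\"uschendorf: $u(x)$ is defined as an iterated supremum of telescoping expressions $\sum_i [c(x_i,y_i)-c(x_{i+1},y_i)]$ over finite chains $(x_i,y_i)\in S\cap V$ anchored at a fixed base point, with $x_{k+1}=x$. Because the chains stay in $V \subseteq U$ where $c \in C^2$, Lemma \ref{L:Lipschitz} guarantees $u$ is locally Lipschitz and locally semiconvex near $x_0$.

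Second, I would use (A2) to introduce coordinates in which $\partial^c u$ becomes an ordinary subdifferential. By the inverse function theorem, $\Phi(x,y):=(x,-D_x c(x,y))$ is a $C^1$-diffeomorphism from some neighborhood (which I may take to be the same $V$) of $(x_0,y_0)$ onto its image in $\R^n\times\R^n$, since the Jacobian $D_y[-D_x c] = -D^2_{xy}c$ is non-singular at $(x_0,y_0)$. For any $(x,y)\in \partial^c u$, the map $z\mapsto u(z)+c(z,y)$ is minimized at $z=x$, so $-D_x c(x,y)\in \partial^- u(x)$ in the limiting Fr\'echet sense. Therefore
\[
\Phi(S\cap V) \subseteq \Gamma := \{(x,p) : p\in \partial^- u(x)\}.
\]
Semiconvexity of $u$ with constant $C$ means $\tilde u(x) := u(x)+\tfrac{C}{2}|x|^2$ is convex near $x_0$, and after the $C^1$-shift $(x,p)\mapsto (x,p+Cx)$ the set $\Gamma$ becomes (locally) the graph of the ordinary convex subdifferential $\partial \tilde u$.

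Third, I would invoke the Minty parametrization. For $(x_1,q_1),(x_2,q_2)$ in the graph of $\partial \tilde u$, classical monotonicity $(q_1-q_2)\cdot(x_1-x_2)\geq 0$ yields
\[
|(x_1+q_1)-(x_2+q_2)|^2 \geq |x_1-x_2|^2+|q_1-q_2|^2,
\]
so $(x,q)\mapsto x+q$ is injective with $1$-Lipschitz inverse on $\mathrm{gph}(\partial\tilde u)$. Hence $\mathrm{gph}(\partial\tilde u)$ is the image of a $1$-Lipschitz map from a subset of $\R^n$, i.e.\ a Lipschitz $n$-submanifold of $\R^{2n}$. Undoing the linear shift and composing with the $C^1$-diffeomorphism $\Phi^{-1}$ transports this Lipschitz manifold back into $M^+\times M^-$, still containing $S\cap V$. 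The main obstacle is the careful execution of the first step: one needs the $c$-convex potential $u$ to be defined so that its semiconvexity estimate from Lemma \ref{L:Lipschitz} is valid throughout a neighborhood of $x_0$, even though $c$ is only assumed $C^2$ on $U$. Restricting the chain construction to remain inside $V$ handles this, since every value $c(x_i,y_i)$ appearing in the supremum then comes from the $C^2$-regular region. With this detail in place, the remaining steps are routine consequences of the inverse function theorem and classical convex analysis.
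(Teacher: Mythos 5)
Your proof is correct, but it takes a genuinely different route from the paper. The paper's argument applies Minty's rotation \emph{directly} to pairs of points in $S$: it changes coordinates only on $M^-$ (setting $\tilde y := D_x b(x_0,\cdot)$, with $b=-c$) so that $D^2_{x\tilde y}\tilde b$ is within $\epsilon$ of the identity, invokes only the two-point inequality $\tilde b(x_0,\tilde y_0)+\tilde b(x_1,\tilde y_1)-\tilde b(x_0,\tilde y_1)-\tilde b(x_1,\tilde y_0)\ge 0$ (the $k=2$ case of $c$-cyclical monotonicity), and then runs the Minty computation to get $(1+\epsilon)|\Delta z|^2 \ge (1-\epsilon)|\Delta w|^2$. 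Your approach instead builds a $c$-convex potential $u$ by the local R\"uschendorf--Rockafellar construction, uses Lemma~\ref{L:Lipschitz} to get semiconvexity of $u$, pushes $S\cap V$ into the Fr\'echet subdifferential graph of $u$ via the $C^1$-diffeomorphism $\Phi(x,y)=(x,-D_xc(x,y))$, and reduces to the classical Alberti--Ambrosio/Minty fact that the graph of the subdifferential of a convex function is a $1$-Lipschitz $n$-graph. Both arguments ultimately rest on Minty's rotation, but the paper's version is more elementary (no potential, no R\"uschendorf machinery, no detour through semiconvexity) and, notably, it proves the strictly stronger statement that two-point $c$-\emph{monotonicity} of $S$ already suffices, whereas your construction genuinely requires full $c$-cyclical monotonicity to guarantee the Rockafellar supremum is finite. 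Your version buys a tidier conceptual reduction to standard convex analysis, and your diffeomorphism $\Phi$ linearizes things exactly on the subdifferential graph rather than only approximately. One bookkeeping point you should make explicit: the anchor point for the Rockafellar chains must be taken in $S\cap V$ (not just any $(x_0,y_0)$), which is harmless since the conclusion is trivial when $S\cap V=\emptyset$.
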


In view of Theorem \ref{T:SmithKnott}, this conclusion applies either to the graph
$S = Graph(G)$ of any optimal map \eqref{Monge work} or the support $S=\spt(\gamma)$
of any optimal measure \eqref{Kantorovich} in the transportation problem.

\begin{center}
\includegraphics[scale=0.30]{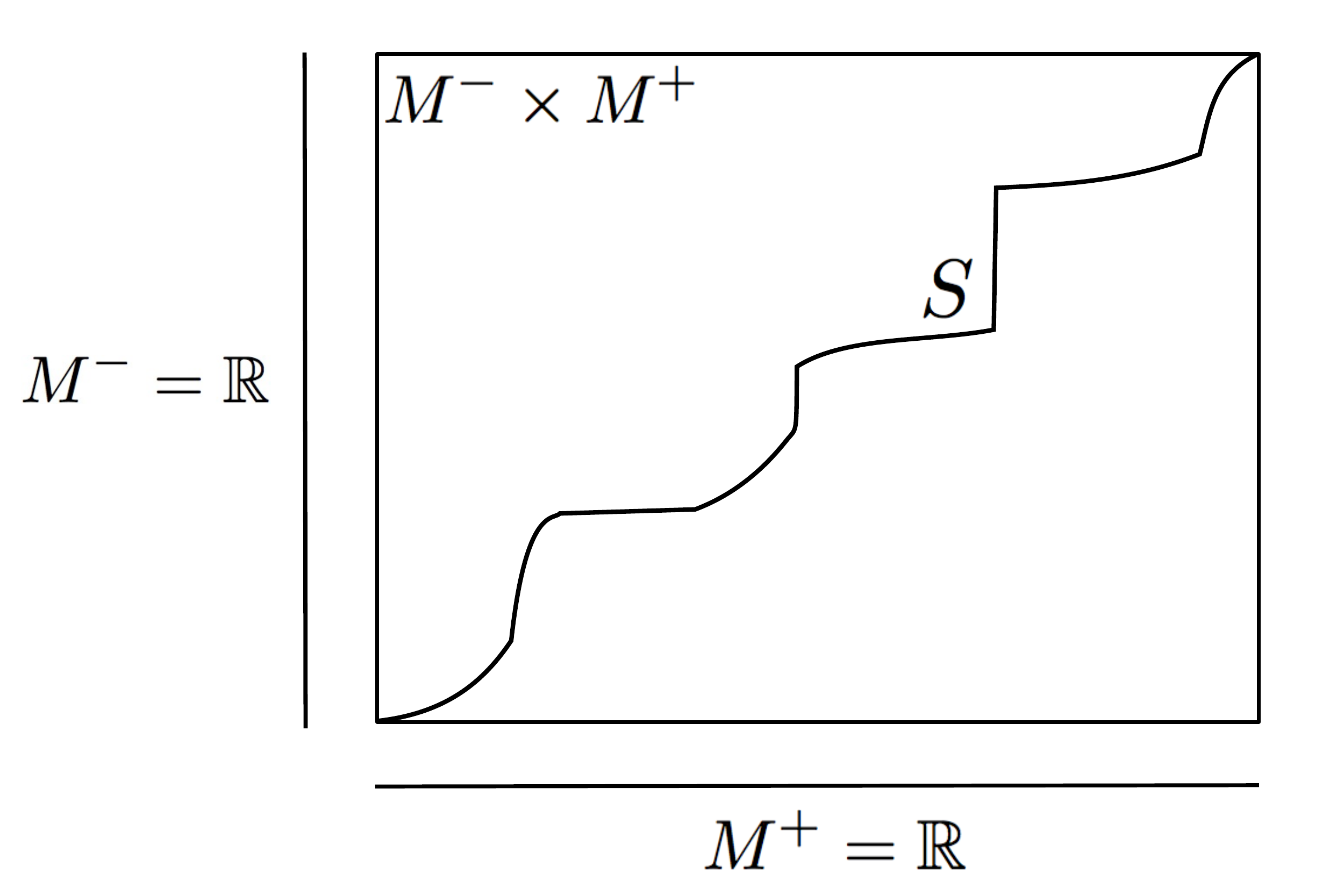}
\end{center}
\centerline{Figure 1: Optimizers have locally monotone support $S$ in the plane.}

\smallskip\noindent
Motivation: on the line $M^\pm=\R$, without further assumptions on $c$ or $\mu^\pm$, a
transport map may not exist nor be monotone, yet the theorem above says that even so all
pieces of $spt\;(\gamma)$ lie along along Lipschitz arcs in the plane. These curves
will actually be locally monotone --- non-decreasing or non-increasing depending on the
sign of $D^2_{xy} c(x_0, y_0)$; see Figure 1.

\begin{proof}[Idea of proof]

Introduce the notation $b=-c$.  In case $c(x,y)=-x\cdot y$ on $M^\pm =\R^n$,
monotonicity asserts for all $(x_0,y_0), (x_1,y_1) \in S$ that
$\Delta x = x_1-x_0$ and $\Delta y = y_1-y_0$ satisfy
$$
0\leq \left  < \Delta x, \Delta y \right >=\left < \frac{\Delta z- \Delta w}{\sqrt{2}}, \frac{\Delta z + \Delta w}{\sqrt{2}}\right >
$$
where
\begin{equation}\label{Minty rotation}
(z,w) := (\frac{x+y}{\sqrt{2}}, \frac{x-y}{\sqrt{2}}).
\end{equation}
This implies that
$|\Delta w|^2\leq |\Delta z|^2$
meaning $w=w(z)$ has Lipschitz constant 1 as a graph over $z \in \R^n$.
Equivalently,  $S$ has Lipschitz constant 1 as a graph over the diagonal in $M^+ \times M^-$.
This special case was established by Alberti and Ambrosio \cite{AlbertiAmbrosio99},
using an argument of Minty \cite{Minty62}.

For more general costs $b=-c$ and any $\epsilon>0$,
the non-degeneracy \eqref{Atwo} implies the existence of
new coordinates $\tilde{y}=\tilde{y}(y)$ on $M^-$ in a neighbourhood of $y_0$
such that $\tilde{b}(x,\tilde{y}(y))=b(x,y)$ satisfies
$|D^2_{x\tilde{y}}\tilde{b}(x,\tilde{y})-I|<\epsilon$
in a neighborhood $\tilde V$ of $(x_0,\tilde y_0)$ which is convex in coordinates.

Now,
$\langle \Delta x, \Delta \tilde y \rangle \geq -\epsilon |\Delta x| \Delta \tilde y|$ follows from
\begin{equation}
0  \le \tilde{b}(x_0,\tilde y_0) + \tilde{b}(x_1,\tilde y_1) - \tilde b (x_0,\tilde y_1) -\tilde b(x_1,\tilde y_0)
 =D^2_{x\tilde y} \tilde b(x^*,\tilde y^*)(x_1-x_0)(\tilde y_1-\tilde y_0)
\end{equation}
and the change of variables analogous to \eqref{Minty rotation} yields
$$
|\Delta z|^2-|\Delta w|^2 \geq -\epsilon |\Delta w-\Delta z||\Delta w+\Delta z|
\geq -\epsilon \left ( |\Delta w|^2+|\Delta z|^2 \right ).
$$
Thus $(1+\epsilon) |\Delta z|^2 \geq  (1-\epsilon)|\Delta w|^2$, which shows
$w=w(z)$ is again a Lipschitz function of $z \in \R^n$ in the chosen coordinates.
\end{proof}

\subsection{From regularity a.e.\ to regularity everywhere}
The regularity results discussed so far --- Lipschitz continuity of the potential $u$,
and of $Graph(G) \subset M^+ \times M^-$ rather than of the map $G(x) = Y(x,Du(x))$ itself ---
required no hypotheses on the probability measures $\mu^-=G_\# \mu^+$.  To address the continuity,
differentiability, and higher regularity everywhere for the map $G:M^+ \longrightarrow M^-$
is a much more
delicate issue which certainly requires further hypotheses on the data $\mu^\pm$ and $c$.
For example,  if $\spt \mu^-$ is connected but $\spt \mu^+$ is not,  then $G$ cannot be
continuous.  The same reasoning makes it clear that ellipticity of the Monge-Amp\`ere equation
\eqref{Monge-Ampere} cannot be non-degenerate for all convex solutions;
regularity must propagate from boundary
conditions since the {\em purely local} effect of the equation is insufficent to conclude
$u \in C^1_{loc}$.
It is often easier to work with the scalar potential $u$ rather than the mapping $G$;
we shall see this reduces the problem to a question in the theory of second-order, fully-nonlinear,
degenerate-elliptic partial differential equations \eqref{MA type} generalizing the
Monge-Amp\`ere equation.  However, this question was answered
first in the special case $c(x,y) = - x \cdot y$ corresponding to the case
\eqref{Monge-Ampere} by Delano\"e in the plane $n=2$
\cite{Delanoe91},  and by Caffarelli and Urbas in higher dimensions $M^\pm = \R^n$
\cite{Caffarelli92} \cite{Caffarelli92b} \cite{Caffarelli96b} \cite{Urbas97}.

\begin{rem}
Note for $c \in C^{k+1}(M^+ \times M^-)$ that
$u \in C^{k+1}$ implies $G \in C^{k}$ by the following remark.
Whereas the twist condition $\Aone^+$\ asserts that the definition $Y(x,p)$ by
$D_x c(x,Y(x,p)) + p = 0$ is unambiguous,  non-degeneracy \Atwo\ allows the implicit
function theorem to be applied to conclude $C^k$ smoothness of  $Y(x,p)$ (where defined).
\end{rem}


\subsection{Regularity methods for the Monge-Amp\`ere equation; renormalization}

There are several methods for obtaining regularity results for convex solutions of
the Monge-Amp\`ere equation.  The first to be discussed here is the {\em continuity method}, used
for example by Delano\"e '91 and Urbas '97.  This approach requires relatively strong assumptions
on the smoothness of the measures $d\mu^\pm = f^\pm dVol$ and on the convexity and smoothness of their domains
$M^\pm \subset \R^n$.  When it applies, it yields global regularity of the resulting potential
up to the boundary of $\partial M^+$ from the same fixed point argument which shows a solution
exists.  The second method is the {\em renormalization method} pioneered by Caffarelli '92-'96,
which starts from the unique (weak) solution to the 2nd boundary value problem and uses affine
invariance of the equation to blow-up the solution near a putative singularity and derive a
contradiction in the limit.  This method is quite flexible:  it has the advantage of yielding
certain conclusions under weaker assumptions on the data, and has therefore proven useful
for addressing such phenomena as the free boundary which arises in partial transport problems,
where the densities $f^\pm$ need not be continuous and are constrained but
not specified a priori \cite{CaffarelliMcCann10}.
Using this method, Caffarelli '92 was able to prove the following regularity result
on the {\em interior} $M^+_{int}$ of $M^+ \subset \R^n$:

\begin{thm}[Local regularity \cite{Caffarelli92}]\label{T:Caffarelli}
Fix $c=-x \cdot y$ and let $M^- \subset \R^n$ be convex.
Assume $\mu^\pm = f^\pm dx$
with $\log f^\pm \in L^\infty(M^\pm)$.  If $\log f^\pm \in W^{k,\infty}_{loc}(M^\pm_{int})$, there exists
$\alpha \in (0,1)$ (depending only on $n,k$ and the bounds on $\log f^\pm$)
such that $u \in C^{k+1,\alpha}_{loc}(M^+_{int})$,
where $u$ is the convex function with $Du(M^+) \subset M^-$ such that $Du_\# \mu^+ = \mu^-$.\\
 \end{thm}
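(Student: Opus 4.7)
The plan is to follow Caffarelli's renormalization programme, which proceeds in three distinct phases: a geometric strict-convexity step, an affine-invariant $C^{1,\alpha}$ estimate, and a Schauder-type bootstrap. Throughout, $u$ denotes the (Brenier) convex potential furnished by Theorem \ref{T:GangboLevin} (or its precursors), which is an Alexandrov solution of $\det D^2 u(x) = f^+(x)/f^-(Du(x))$ and whose gradient takes values in $\overline{M^-}$.

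The first and most delicate step is to show that $u$ is \emph{strictly} convex on $M^+_{int}$. Assume for contradiction that at some interior $x_0$, the subdifferential $\partial u(x_0)$ contains more than one point or, equivalently, the contact set $E = \{ x \in M^+_{int} : u(x) = \ell(x)\}$ between $u$ and some supporting affine function $\ell$ contains a non-trivial segment. One examines the structure of $\partial u(E)$: its extremal points must lie above the relative boundary of $E$, and a careful measure-theoretic accounting using the push-forward $Du_\# \mu^+ = \mu^-$ together with the lower bound $f^- \geq c > 0$ forces a contradiction provided $\partial u(E) \subset \overline{M^-}$ is convex --- and here is precisely where convexity of the target $M^-$ is used. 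This is the main obstacle, since without it the argument collapses and strict convexity (hence interior $C^1$ regularity) genuinely fails. Strict convexity immediately promotes $u$ to $C^1_{loc}(M^+_{int})$, since a strictly convex function has single-valued subdifferential.

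The second step upgrades $u$ to $C^{1,\alpha}_{loc}$ by the renormalization argument. One works with sections $S_h(x_0) = \{ x : u(x) < \ell_{x_0}(x) + h\}$ where $\ell_{x_0}$ is the supporting affine function at $x_0$; by John's lemma each section admits an affine map $T$ normalizing it between inscribed and circumscribed balls. The rescaled potential $\tilde u (y) = h^{-1}(u(T^{-1}y) - \ell_{x_0}(T^{-1}y))$ still solves a Monge-Amp\`ere equation with a right-hand side pinched between two positive constants, so Alexandrov--Caffarelli estimates and the comparison with the solution whose right-hand side is constant yield a geometric decay of oscillation of $\nabla u$ across dyadic sections. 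Iterating gives $C^{1,\alpha}$ at $x_0$ with $\alpha = \alpha(n, \|\log f^\pm\|_\infty)$, settling the case $k=0$.

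Finally, with $u$ strictly convex and $C^{1,\alpha}$, the linearization of $\det D^2 u = g$ along $u$ becomes uniformly elliptic on each compactly contained section. If $\log f^\pm \in C^{0,\alpha}_{loc}$ --- which is implied by $W^{1,\infty}_{loc}$ through the Sobolev embedding $W^{1,\infty} \hookrightarrow C^{0,1} \subset C^{0,\alpha}$ --- Caffarelli's $C^{2,\alpha}$ Schauder estimate for Monge-Amp\`ere applies and yields $u \in C^{2,\alpha}_{loc}$. For $k \geq 2$ one differentiates the equation once, observes that each first derivative $\partial_i u$ satisfies a linear uniformly elliptic equation $L (\partial_i u) = \partial_i \log g$ where $L = (\det D^2 u) U^{ij} \partial_{ij}$ and $U^{ij}$ is the cofactor matrix, and applies classical Schauder estimates iteratively. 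Since $\log f^\pm \in W^{k,\infty}_{loc} \subset C^{k-1,1}_{loc}$, this bootstrap terminates at $u \in C^{k+1,\alpha}_{loc}$, matching the stated regularity.
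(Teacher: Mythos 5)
The paper does not prove Theorem~\ref{T:Caffarelli} itself---it is cited directly from Caffarelli's 1992 paper, and only the bootstrap phase is sketched in Remark~\ref{R:higher regularity}. Your three-phase outline (strict convexity, affine-invariant oscillation decay, Schauder bootstrap) is indeed the correct shape of Caffarelli's renormalization argument, and your final paragraph matches the linearization-and-bootstrap described in Remark~\ref{R:higher regularity}. So the architecture is right.

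There is, however, a genuine logical error in your first step. You write that ``a strictly convex function has single-valued subdifferential,'' and earlier you assert that $\partial u(x_0)$ being multi-valued is \emph{equivalent} to the contact set $E=\{u=\ell\}$ containing a segment. Both claims are false. The function $u(x)=|x|+\tfrac{1}{2}|x|^2$ is strictly convex on $\R$ yet $\partial u(0)=[-1,1]$; and more generally, strict convexity of $u$ (no affine piece in the $x$-variable) and differentiability of $u$ (single-valued $\partial u$) are \emph{dual} to one another under the Legendre transform---$u$ strictly convex $\Leftrightarrow$ $u^*$ differentiable, and $u$ differentiable $\Leftrightarrow$ $u^*$ strictly convex---not equivalent to each other. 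Caffarelli must establish \emph{both} properties, and the two require logically distinct (though structurally parallel) arguments. The convexity of $M^-$ enters precisely because $\overline{M^-}$ is the effective domain of the Legendre dual $u^*$; it is the strict convexity of $u^*$ on the convex domain $M^-$, obtained via his extremal-point lemma and the pinched Monge--Amp\`ere measure, that yields differentiability of $u$. By conflating the two dual properties, your step 1 never actually produces $u\in C^1$, and without single-valuedness of $\partial u(x_0)$ the sections $S_h(x_0)$ in step 2 are not uniquely determined and the oscillation-decay iteration cannot even be set up. To repair the argument you need to run the extremal-point/measure-balance argument separately for $u$ (strict convexity) and for $u^*$ on $M^-$ (differentiability of $u$), and be explicit that the target convexity is what makes the second of these available.
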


\begin{rem}[Degenerate ellipticity] As shown also in \cite{Caffarelli92}, when one drops the convexity assumption on $M^-$ the gradient map may be discontinuous at interior points. This goes in hand with the claim made in the previous subsection that regularity (even in the interior) must propagate from the boundary.

\end{rem}

\begin{rem}[Local versus global regularity]\label{R:CaffarelliB}
This is a local regularity result. Global regularity (up to $\partial M^+$) requires
both domains $\partial M^\pm$ to be strongly convex
and smooth (Caffarelli '96) \cite{Caffarelli96b}.
Here {\em strong} convexity means the principal curvatures (or second fundamental form)
of the domain boundaries should be positive-definite.
\end{rem}

\begin{rem}[Higher regularity via uniformly elliptic linearization]\label{R:higher regularity}
The cases of primary relevance are $\log f^\pm$ merely bounded and measurable ($k=0$), and
$\log f^\pm$ also locally Lipschitz ($k=1$). Once $u \in C^{2,\alpha}_{loc}$ has been
deduced from these assumptions, higher regularity in the interior of $M^\pm$ follows from
uniform ellipticity of the Monge-Amp\`ere equation \eqref{Monge-Ampere}:
\begin{equation}\label{pre-linearized Monge-Ampere}
0=\log (\mbox{det}(D^2u(x)+\tau D^2w(x)))+\log (f^-(Du(x)+\tau Dw(x)))-\log (f^+(x)).
\end{equation}
For example, linearizing this equation at $\tau=0$ yields the equation
\begin{equation}\label{uniformly elliptic linearization}
0=Tr(D^2u(x)^{-1}D^2w(x))+D\log f^-|_{Du(x)} \cdot Dw(x),
\end{equation}
which must be satisfied by spatial derivatives $w=D_{x^i} u$ of $u$.
Convexity combines with $u \in C^{2,\alpha}_{loc}$ 
and the equation \eqref{Monge-Ampere} itself to bound $\|\Lambda^{n-1}f^-/f^+\|_{L^\infty}^{-1} \leq D^2u(x)\leq \Lambda$
on compact subsets of $M^+_{int}$. The derivatives of $u$ thus satisfy a uniformly elliptic
linear equation \eqref{uniformly elliptic linearization} with H\"older continuous coefficients,
so Schauder estimates \cite{GilbargTrudinger83}
and bootstrapping yield as much as regularity as can be expected when $k \geq 2$.
\end{rem}

\subsection{The continuity method (schematic)} (cf. Delano\"e '91, Urbas '97):
To apply the continuity method,
we assume $M^\pm \subset \R^n$ are smooth and strongly convex,
and $\log f^\pm \in C^{2,\alpha}(M^\pm) \cap L^\infty$.


Choose a dilation by $\epsilon>0$ sufficiently small and translation $M_0 =G_0(M^+)$ of $M^+$
by $x_0 \in \R^n$ such that $M_0 \subset\subset M^-$.
Let $\mu_0 := (G_0)_\#\mu^+$ be the push-forward of $\mu^+$
through the corresponding dilation and translation $G_0(x) = \epsilon x - x_0$.
Notice that $G_0$ is the gradient of the smooth convex function
$u_0(x) = \epsilon |x|^2/2 - x_0 \cdot x$, and as such gives the optimal map
between $\mu^+$ and $\mu_0$.  The idea behind the continuity method is to construct a family
of target measures $d\mu_t =f_t dVol$ interpolating between $d\mu_0$ and $d\mu_1 := d\mu^-$,
and to study the set $T$ of $t \in [0,1]$ for which the optimal transportation problem of Brenier
admits a solution with a convex potential $u_t \in C^{2,\alpha}(M^+)$.  The interpolating
measures must be constructed so that the $C^{2,\alpha}(M^\pm) \cap L^\infty$ norms
of $\log f_t$,  and the strong convexity and smoothness of $M_t = \spt f_t$,  can be
quantified independently of $t \in [0,1]$.  We then hope to show $T \subset [0,1]$ is both
open and closed.  If so,  it must exhaust the entire interval (since $0 \in T$), therefore
$1 \in T$ as desired.

Closed: To show closedness of this set requires an a priori estimate of the form
$\|u_t\|_{C^{2,\alpha}(M^+)} \le C(\|\log f^\pm\|_{C^{2,\alpha}(M^\pm)},
\|\partial M^\pm\|_{C^{2,\alpha}, {\rm strong\ convexity}})$
for any smooth solution $u_t \in C^4(M^+)$ of
the 2nd boundary value problem
\begin{equation}\label{2ndBVP}
\mbox{det} D^2u_t(x)=\frac{f_0(x)}{f_t(Du_t(x))}
\qquad {\rm with} \qquad Du_t(M^+) \subset M_t.
\end{equation}
Such estimates are delicate, but can be obtained by
differentiating the equation twice,  and constructing barriers.
Once obtained,  they imply that if $t_k \in T$ and $t_k \to t_\infty$ then $t_\infty \in T$ also;
the corresponding solutions $u_{t_k}$ belong to $C^4(M^+)$ as in Remark \ref{R:higher regularity}.

Open: The fact that $T$ is open is shown using an implicit function theorem in Banach spaces.
This requires knowing that the linearized operator is invertible (ie. uniformly elliptic),
and can be solved for the relevant boundary conditions.

For $u\in C^{2,\alpha}(M^+)$ we have already argued the uniform ellipticity
of the linearization \eqref{uniformly elliptic linearization}.  To linearize the boundary
conditions $Du_t(M^+) \subset M_t$,  introduce a sufficiently smooth and strongly convex function
$h: \R^n\to \R$ whose level sets $M_t= \{ y \in \R^n \mid  h(y)\leq t\}$ give the domains
$M_t:=\spt f_t$, and rewrite the non-linear boundary condition in the form $h(Du_t(x)) \le t$
with equality on $\partial M^+$.
Linearizing this in $u$ yields the boundary condition of the linear equation for $w$:
\begin{equation}\label{linear BC}
Dh(Du_t(x))\cdot Dw(x)=0 \qquad {\rm on} \ \partial M^+.
\end{equation}
For the linear problem \eqref{uniformly elliptic linearization} to be well-posed,
we need a {\em uniformly non-tangential} prescribed gradient for $w$ on $\partial M^+$.
Since $Dh$ parallels the normal $\hat n_{M_t}$ to $M_t$, this amounts to the uniform
obliqueness estimate
$$\hat{n}_{M_t}(Du_t(x))\cdot \hat{n}_{M^+}(x) \geq \delta>0\;\; \mbox{(obliqueness)}$$
provided by Urbas \cite{Urbas97}, with $\delta$ depending only on coarse bounds
for the data.  This concludes the sketch that $T\subset [0,1]$ is open:
well-posedness of the linear problem \eqref{uniformly elliptic linearization}--\eqref{linear BC}
when $t=t_0 \in T$
implies the existence of solutions $u_t \in C^{2,\alpha}(M^+)$ to the nonlinear problem
\eqref{2ndBVP} for any $t$ close enough to $t_0$.

Both approaches  (renormalization and continuity method) have been extended in recent years to more general costs, and this will be the topic of the next few lectures.\\

\section{Regularity and counterexamples for general costs}

\subsection{Examples}
The development of a regularity theory for general cost functions satisfying appropriate
hypotheses on compact domains $M^\pm \subset \R^n$ began with the work of Ma, Trudinger and
Wang \cite{MaTrudingerWang05}.  Prior to that there were regularity results only for a
few special costs, such as:

\begin{ex}[Bilinear cost]\label{E:bilinear}
$c(x,y)=-x\cdot y$ or equivalently $c(x,y) = |x-y|^2/2$
\cite{Delanoe91} \cite{Caffarelli92} \cite{Caffarelli96b} \cite{Urbas97},
and its restriction to $M^{\pm}=\partial B_1(0)$ in $\R^n$ (Gangbo and McCann \cite{GangboMcCann00});
\end{ex}

\begin{ex}[Logarithmic cost; conformal geometry and reflector antenna design]\label{E:logarithm}
$c(x,y)=-\log|x-y|$ appearing in conformal geometry
(cf. Viaclovsky's review \cite{Viaclovsky06p}),
and its restriction to the Euclidean unit sphere,
which is relevant to reflector antenna design
(Glimm and Oliker \cite{GlimmOliker03}, X.-J.~Wang \cite{Wang96}\cite{Wang04})
and helped to inspire Wang's subsequent
collaborations \cite{MaTrudingerWang05} \cite{TrudingerWang09b}
\cite{TrudingerWang09c} \cite{LiuTrudingerWang10}
with Trudinger, Ma, and Liu.
\end{ex}

In the wake of Ma, Trudinger and Wang's \cite{MaTrudingerWang05} results,  many new examples
have emerged of cost functions which satisfy \cite{Loeper08p}
\cite{DelanoeGe} \cite{FigalliRifford08p} \cite{FigalliRiffordVillani-MTW09p} \cite{FigalliRiffordVillani-Sn09p}
\cite{KimCounterexample} \cite{KimMcCann07p} \cite{KimMcCann08p} \cite{LeeLi09p} \cite{LeeMcCann09p} \cite{Lee09p}
\cite{Li09} \cite{LoeperVillani10} \cite{MaTrudingerWang05}
--- or which violate \cite{MaTrudingerWang05} \cite{Loeper09} --- their sufficient
conditions \Azero--\Afour\ and \Athree$_s$ for regularity from \S \ref{S:MTW conditions} below
 --- not to mention the subsequent variants \Athree\ and \Bthree\ introduced
by Trudinger and Wang \cite{TrudingerWang09b} and Kim and McCann \cite{KimMcCann07p} respectively,
on the crucial condition \Athrees.  Among the most interesting of these are the geometrical
examples and counterexamples of Loeper:

\begin{ex}[Sphere]\label{E:sphere}
$c(x,y)=\frac{1}{2}d^2_{\sphere^n}(x,y)$ on the round sphere satisfies \Athrees~\cite{Loeper08p}
      (and \Bthree~\cite{KimMcCann08p});
\end{ex}

\begin{ex}[Saddle]\label{E:saddle}
$c(x,y)=\frac{1}{2}d^2_{M}(x,y)$ on hyperbolic space $M=\hyperbolic^n$
      violates \Athrees\ (and \Athree\ \cite{Loeper09}) --- as does the Riemannian distance squared
      cost on any Riemannian manifold $M^\pm = M$ which has (at least one) negative sectional
      curvature at some point $x \in M$.
\end{ex}



\subsection{Counterexamples to the continuity of optimal maps}\label{S:counterexample}

For any cost function which violates \Athree,  Loeper went further to show
there are probability measures $d\mu^\pm = f^\pm d^n x$ with smooth positive
densities bounded above and below --- so that $\log f^\pm \in C^\infty\big(M^\pm\big)$ ---
for which the unique optimal map $G:M^+ \longrightarrow M^-$ is discontinuous \cite{Loeper09}.
Let's see why this is so for the quadratic cost given on either the hyperbolic plane or a saddle
surface as in Example \ref{E:saddle}.

Consider transportation from the uniform measure $\mu^+$ on a sufficiently small ball
to a target measure consisting of three point masses
$\mu^- = \frac{1}{3}\sum_{i\leq 3}\delta_{y_i}$ near the center of the ball,  choosing
$y_2$ to be the midpoint of $y_1$ and $y_3$.
In this case Theorem~\ref{T:GangboLevin} provides constants $v_1, \ldots, v_3$
and a $c$-convex function
\begin{equation}\label{3-point}
u(x) = \max \{u_i(x) \mid i =1,2,3\}\qquad \mbox{\rm where}\qquad u_i(x) = - c(x,y_i) - v_i,
\end{equation}
such that the optimal map $G_\# \mu^+ = \mu^-$ satisfies
$G^{-1}(y_i) = \{x \in M^+ \mid u(x) = u_i(x)\}
$.
We interpret $-v_i$ to be the value of the good at the potential destination $y_i \in \spt \mu^-$;
the producer at $x \in M^+$ will ship his good to whichever target point $y_i$ provides the
greatest value after transportation costs are deducted \eqref{3-point};
here the values $v_1,\ldots,v_3$ are
adjusted to balance supply with demand,  so that each of the three regions $G^{-1}(y_i)$
contains $1/3$ of the mass of $\mu^+$.  For the Euclidean distance-squared cost these three regions
are easily seen to be convex sets,  while for the spherical distance-squared they remain connected.
For the hyperbolic distance-squared, however,  the `middle' region $G^{-1}(y_2)$ consists of two
disconnected components, near opposite sides of the ball $\spt \mu^+$
(see figure below, or for instance Figure 1 of \cite{KimMcCann07p}).
This disconnectedness is the hallmark of costs for which
\Athree\ fails,  and allowed Loeper to construct 
counterexamples to the continuity of optimal mappings as follows.

In the preceding discussion,  $\mu^-$ was not given by a smooth positive density;
still it can be approximated by a sequence of measures $\mu^-_\epsilon := \mu^- * \eta_\epsilon$
which are.  Now consider the reverse problem of transporting $\mu^-_\epsilon$ to $\mu^+$.
Call the optimal map for this new problem $x = G^-_\epsilon(y)$.  For $\delta>0$,  taking
$\epsilon>0$ sufficiently small ensures for each $1 \le i \le 3$ that nearly $1/3$ of the mass of
$\mu^-_\epsilon$ concentrates near $y_i$ and is mapped into a $\delta$-neighbourhood of
$G^{-1}(y_i)$.  Intuitively, for $\delta$ sufficiently small, this forces a discontinuity of
$G^-_\epsilon$ which tears the region near $y_2$ into at least two disconnected components:
nearly half of the mass near this point must map to each disconnected component of $G^{-1}(y_2)$;
see Figure 2.
This construction shows why the distance-squared cost on a hyperbolic or saddle surface
cannot generally produce smooth optimal transport maps.

\begin{center}
\includegraphics[scale=0.30]{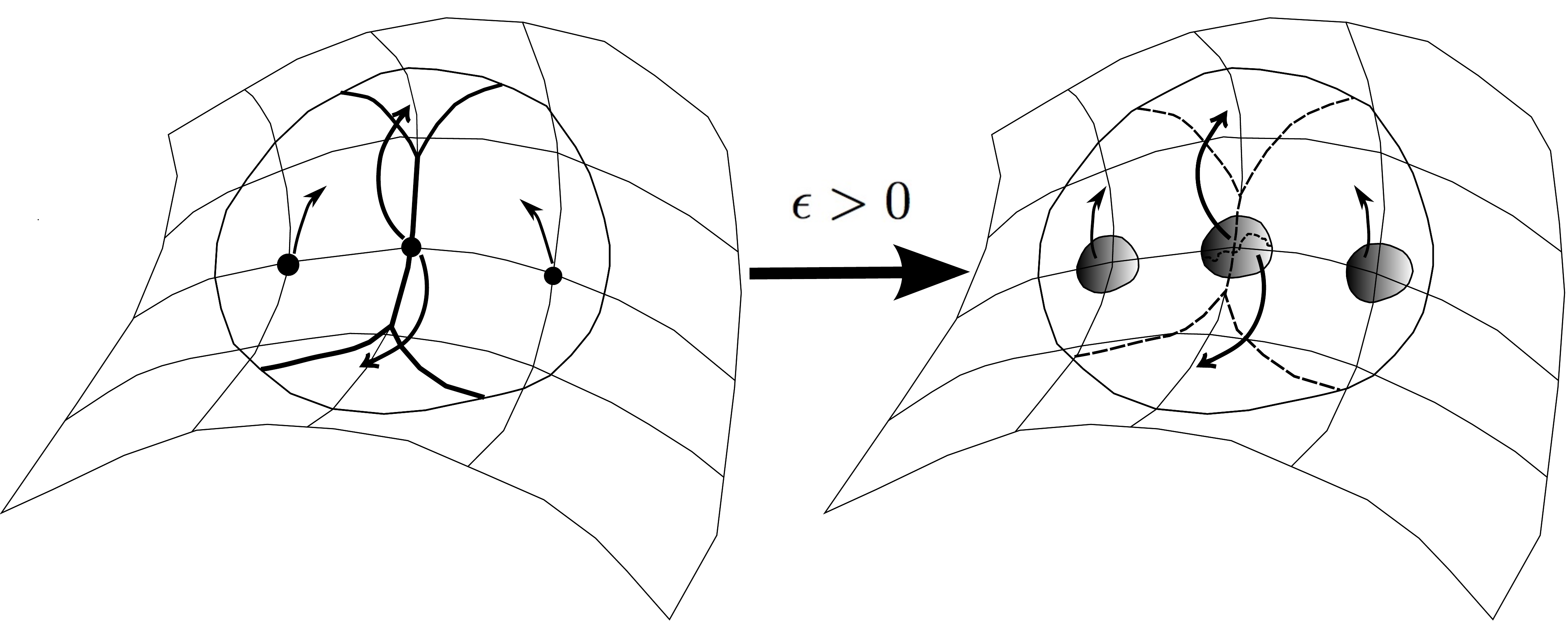}
\end{center}
\centerline{Figure 2: A tear occurs when spreading a triply peaked
density uniformly over the saddle.}
\smallskip

There is another of obstruction to the continuity of $G$,  namely the convexity
(at least when $M^\pm=\mathbb{R}^n$) of the support of $\mu^-$. This was shown by
Caffarelli \cite{Caffarelli92} with the following elementary example: consider
$u:\mathbb{R}^2\to\mathbb{R}$ given by
\begin{equation*}
    u(x)=|x_1|+\frac{1}{2}|x|^2,\;\;x=(x_1,x_2).
\end{equation*}
If we consider the cost $c(x,y)=-x\cdot y$, then $y=Du(x)$ gives the optimal transport map
between the unit disc (with Lebesgue measure) into two shifted half discs (Figure 3);
in particular, the transport map is discontinuous across $\{x_1=0\}$.

\bigskip
\begin{center}
\includegraphics[scale=0.30]{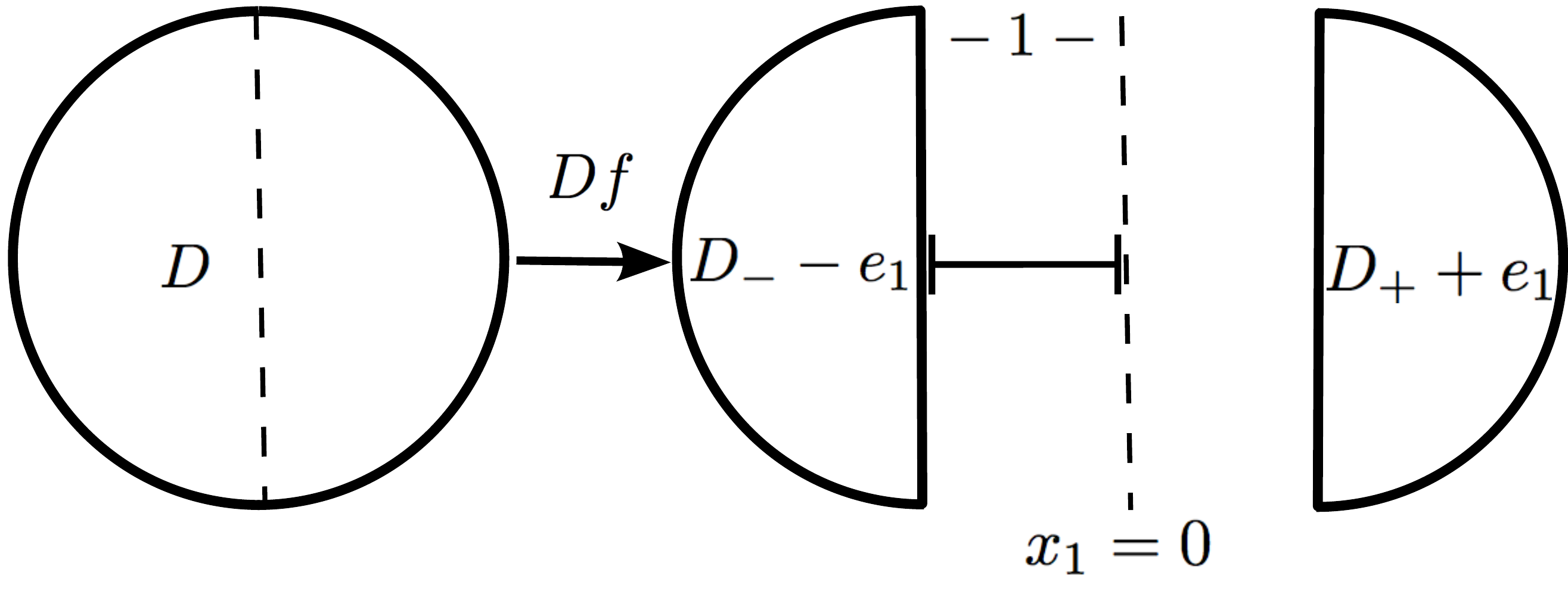}
\end{center}
\centerline{Figure 3. Disconnected targets also produce tears, as do non-convex targets ($\spt \mu^-$).}
\smallskip

We turn now to conditions which rule out these type of examples, and lead to positive regularity results.

\subsection{Monge-Amp\`ere type equations}

For the quadratic cost, finding a smooth optimal map was equivalent to solving the 2nd
boundary value problem for the Monge-Ampere equation \eqref{Monge-Ampere}.  Let us now
derive the analogous equation for a more general cost,  keeping in mind
that whatever PDE we end up with cannot generally be better than degenerate-elliptic,
since vanishing of $d\mu^- = f^- d Vol$ can lead to non-smooth solutions in the interior of the
support of $d\mu^+ =f^+ dVol$.

Let us see what specific PDE emerges from the local expression
$|\det DG(x)| = f^+(x)/f^-(G(x))$ for $G_\# \mu^+ = \mu^-$.
Recall from Corollary \ref{C:foc}
that $D^2_{xx}c(x,G(x))+D^2u(x) \geq 0$ and $D_x c (x,G(x))+Du(x)=0$.
Differentiating the latter expression gives a relation
$$
D^2_{xx}c(x,G(x)) +D^2_{xy}c(x,G(x))DG(x)+D^2u(x)=0
$$
which can be solved for $DG(x)$ to yield
\begin{equation}\label{MA type}
\det \left ( D^2u(x)+D^2_{xx}c(x,Y(x,Du(x))) \right ) =
\bigg| \det \left (D^2_{xy}c(x,y)\right ) \; \frac{f^+(x)}{f^-(y)} \bigg|_{y=Y(x,Du(x))}.
\end{equation}
Here we have assumed $\Aone^+$--\Atwo, the form $G(x) = Y(x,Du(x))$ of the optimal
map is from Theorem \ref{T:GangboLevin},  and the boundary condition is
$Y(x,Du(x)) \subset M^-$ for all $x \in M^+$. We have arrived as before at a fully-nonlinear
second-order equation, whose linearization around any $c$-convex solution $u=u^{\tc c}$ is
degenerate-elliptic.

\subsection{Ma-Trudinger-Wang conditions for regularity}\label{S:MTW conditions}

Sufficient conditions for the $c$-optimal map $G:M^+ \longrightarrow M^-$ to be smooth
between a pair of smooth bounded probability densities satisfying
$\log f^\pm \in C^\infty(M^\pm)$ on compact domains $M^\pm \subset \R^n$ were
found by Ma-Trudinger-Wang and Trudinger-Wang \cite{MaTrudingerWang05} \cite{TrudingerWang09b}.
The crucial condition on the cost $c(x,y)$ distinguishing
Examples \ref{E:bilinear}--\ref{E:sphere} from Example \ref{E:saddle} above
involves a quantity they identified,  which other authors have variously dubbed
the {\em Ma-Trudinger-Wang tensor }\cite{Villani09}, {\em $c$-sectional curvature} \cite{Loeper09},
or {\em cross-curvature} \cite{KimMcCann07p}; c.f.~\S \ref{S:differential geometry} below.
To define it, let us adopt their convention that subscripts such as
$c_{i,j} = \partial^2 c/\partial x^i \partial y^j$ and
$c_{ij,kl} = \partial^4 c/ \partial x^i \partial x^j \partial y^k \partial y^\ell$
indicate iterated derivatives in coordinates,  with commas separating
derivatives with respect to $x \in M^+$ from those with respect to variables $y \in M^-$.
Let $c^{j,i}(x,y)$ denote the inverse matrix to $c_{i,j}(x,y)$.

\begin{DEF}[Cross-curvature]
Given tangent vectors $p \in T_{x_0}M^+$ and $q \in T_{y_0}M^-$, define
$\cross(p,q):=\left (-c_{ij,kl}+c_{ij,r}c^{r,m}c_{m,kl} \right )p^ip^jq^kq^l$.
Here and subsequently, the Einstein summation convention is in effect.
\end{DEF}
%
%
The conditions assumed by Ma, Trudinger and Wang were the following \cite{MaTrudingerWang05};
our designations \Athree$_s$ and \Athree\ correspond to their \Athree\ and
{\rm (A3w)} from \cite{TrudingerWang09b}: \\

\Azero\ $c \in C^4(M^+\times M^-)$, and for all $(x_0,y_0)$ in the compact set
${M^+ \times M^-} \subset \R^n \times \R^n$;\\

\Aone\ $y \in {M^-} \longmapsto D_x c(x_0,y)$ {\rm\ and}\
$x \in M^+ \longmapsto D_y c(x,y_0)$\ {\rm are injective;} \\

\Atwo\ $\mbox{det}D^2_{x^i,y^j}c(x_0,y_0)=\mbox{det}(c_{i,j})\neq 0$;\\


\Athree\ $\cross(p,q) \ge 0$ for all $(p,q) \in T_{(x_0,y_0)} M^+ \times M^-$
such that $p^i c_{i,j} q^j =0$; \\

\Afour\ $M^-_{x_0}:=D_xc(x_0,M^-) \subset \R^n$ and $M^+_{y_0}:=D_yc(M^+,y_0) \subset T^*_{y_0}M^-$
are convex. \\
%

Among the variants on \Athree\ subsequently
proposed \cite{KimMcCann07p} \cite{LoeperVillani10} \cite{FigalliRifford08p},
let us recall the {\em non-negative cross-curvature} condition \cite{KimMcCann07p}:\\

\Bthree\ $\cross(p,q) \ge 0$ for all $(p,q) \in T_{(x_0,y_0)} M^+ \times M^-$. \\

The first two conditions above are familiar from Theorems \ref{T:GangboLevin} and
\ref{T:McCannPassWarren};  \Aone\ was proposed 
independently of \cite{MaTrudingerWang05}
in \cite{Gangbo95} \cite{Levin99}, while there is an antecedent for \Atwo\ in
the economics literature \cite{McAfeeMcMillan88}.  The last condition \Afour\ adapts
the convexity required by Delano\"e, Caffarelli (Theorem \ref{T:Caffarelli}) and Urbas,
to the geometry of the cost function $c(x,y)$;  when $M^+_{x_0}$ and $M^-_{y_0}$ are smooth and their
convexity is strong --- meaning the principal curvatures of their boundaries are all strictly
positive --- we denote it by \Afour$_s$. When inequality \Athree\ or \Bthree\ holds strictly ---
and hence uniformly on the compact set $M^+ \times M^-$ --- we denote that fact by
\Athree$_s$\ or \Bthree$_s$, respectively.

\begin{rem}\label{R:bilinear borderline}
The quadratic cost $c(x,y) = -x \cdot y$ of Brenier
satisfies \Bthree\ but not \Athree$_s$.  Since we have already seen that
\Athree\ is necessary \cite{Loeper09} as well as sufficient for the continuity
of optimal maps,  the quadratic cost is actually a delicate borderline case.
The negative $-c$ of any cost $c$ satisfying \Athree$_s$ --- including those of Examples
\ref{E:logarithm}--\ref{E:sphere} --- necessarily violates \Athree.
\end{rem}

\subsection{Regularity results}

Assume \Azero--\Afour\ and $\log f^\pm \in C^\infty(M^\pm)$.  Under the stronger condition
\Athree$_s$,  Ma, Trudinger and Wang \cite{MaTrudingerWang05}
proved the interior regularity of the optimal map $G$
and corresponding $c$-convex potential $u \in C^\infty(M^+_{int})$;
a flaw in their argument was later repaired in \cite{TrudingerWang09c}
(see \cite{KimMcCann07p} for another approach).
Substituting strong convexity \Afour$_s$ for \Athree$_s$,  but retaining \Athree,
Trudinger and Wang \cite{TrudingerWang09b} used the continuity method to establish
regularity up to the boundary $u \in C^\infty(M^+)$.  Relaxing strong convexity to
\Afour\ in that context is an open problem.

For densities merely satisfying
$f^+/f^- \in L^\infty(M^+ \times M^-)$,  under the strong condition
\Athree$_s$, Loeper  was able to establish local H\"older continuity of the optimal map
--- or equivalently $u \in C^{1,\alpha}_{loc}(M^+_{int})$ --- with explicit H\"older exponent
$\alpha = 1/(4n-1)$, using a direct argument \cite{Loeper09} that we sketch out below.
This exponent was later improved to its sharp value $\alpha = 1/(2n-1)$ by Liu \cite{Liu09}.
For the quadratic cost $c(x,y) = -x\cdot y$,  the best known estimates \cite{ForzaniMaldonado04}
for the H\"older exponent $\alpha$ are much worse, and depend on bounds for $\log (f^+/f^-)$.
Assuming non-negative cross-curvature
\Bthree\ and \Afour$_s$ instead of \Athree$_s$, Figalli, Kim and McCann adapted Caffarelli's
renormalization techniques \cite{FigalliKimMcCann} to derive
continuity and injectivity of optimal maps but without any H\"older exponent;
using one of their arguments,
a similar conclusion was obtained by Figalli and Loeper  \cite{FigalliLoeper09}
in the special case $n=2$ assuming only \Athree\ and \Afour$_s$.
Liu, Trudinger and Wang showed that higher
regularity then follows from further assumptions on $f^+/f^-$ in any
dimension \cite{LiuTrudingerWang10}.

Using this theory, regularity results have now been obtained in geometries such
as the round sphere \cite{Loeper08p},  perturbations \cite{DelanoeGe} \cite{FigalliRifford08p}
\cite{FigalliRiffordVillani-Sn09p}, submersions \cite{DelanoeGe} \cite{KimMcCann08p}
and products \cite{FigalliKimMcCann-spheres} thereof, and hyperbolic space
\cite{Li09} \cite{LeeLi09p}. Significant cut-locus issues arise in this context.
Loeper and Villani  \cite{LoeperVillani10} conjecture,  and in some cases have proved,
that condition \Athree$_s$ on the quadratic cost $c(x,y)=d^2(x,y)$ 
actually implies convexity of the domain of injectivity of the Riemannian exponential map
$\exp_x : T_x M \longrightarrow M$.

\subsection{Ruling out discontinuities: Loeper's maximum principle}

Let us discuss how the condition (A3) rules out the tearing phenomenon which we saw
on the saddle surface of Example \ref{E:saddle}.

Discontinuities in the optimal map $G(x) = Y(x,Du(x))$ correspond to locations $x_0 \in M^+$ where
differentiability of the potential function $u=u^{\tc c}$ fails, such as locations
where the supremum
\begin{equation}\label{c0-transform}
u(x_0)=\sup \limits_{y \in M^-} -c(x_0,y)-u^\tc(y)
\end{equation}
is attained by two or more points $y_0 \ne y_1 \in M^-$.
\begin{center}
\includegraphics[scale=0.30]{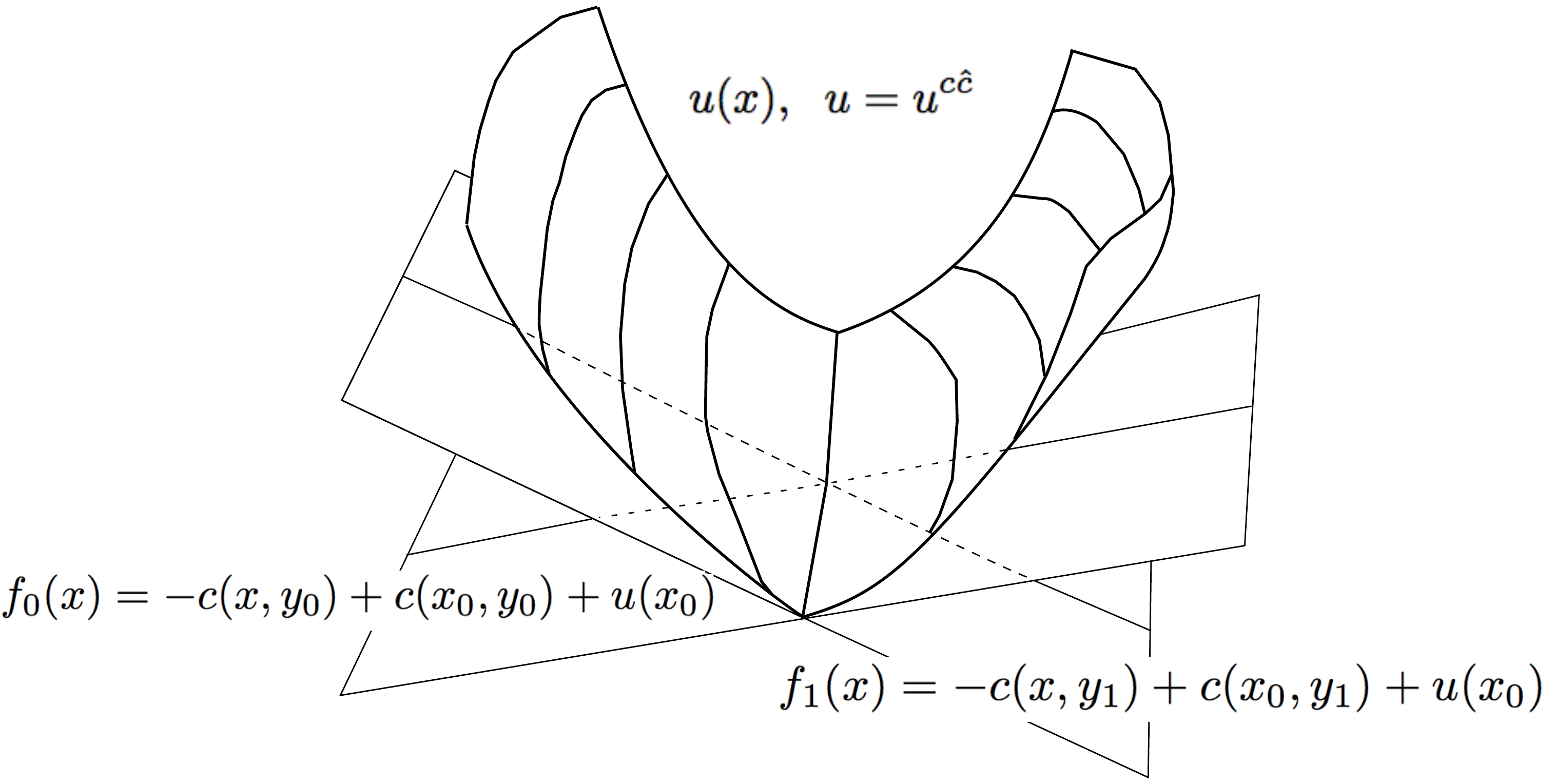}
\end{center}
\centerline{Figure 4. Discontinuous optimal maps arise from distinct supporting hyperplanes.}
\smallskip \noindent
The set of such $y$ is denoted by $\partial^c u(x_0)$,  while the
set of such pairs is denoted $\partial^c u \subset M^+ \times M^-$.
Unless we can find a continuous curve
$t \in [0,1] \longmapsto y_t \in \partial^c u(x_0)$
which connects $y_0$ to $y_1$,
it will be possible \cite{Loeper09} to construct probability densities satisfying
$\log f^\pm \in C^\infty(M^\pm) \cap L^\infty$ with a discontinuous optimal map as in
\S \ref{S:counterexample} above.  But there are not many possibilities to have such
a curve.

In the classical case $c(x,y)=-x\cdot y$ (see Figures 4 and 5),
$\partial^c u = \partial u$ and we have a convex function $u$ with
two different supporting planes at $x_0$.
%

\begin{center}
\includegraphics[scale=0.30]{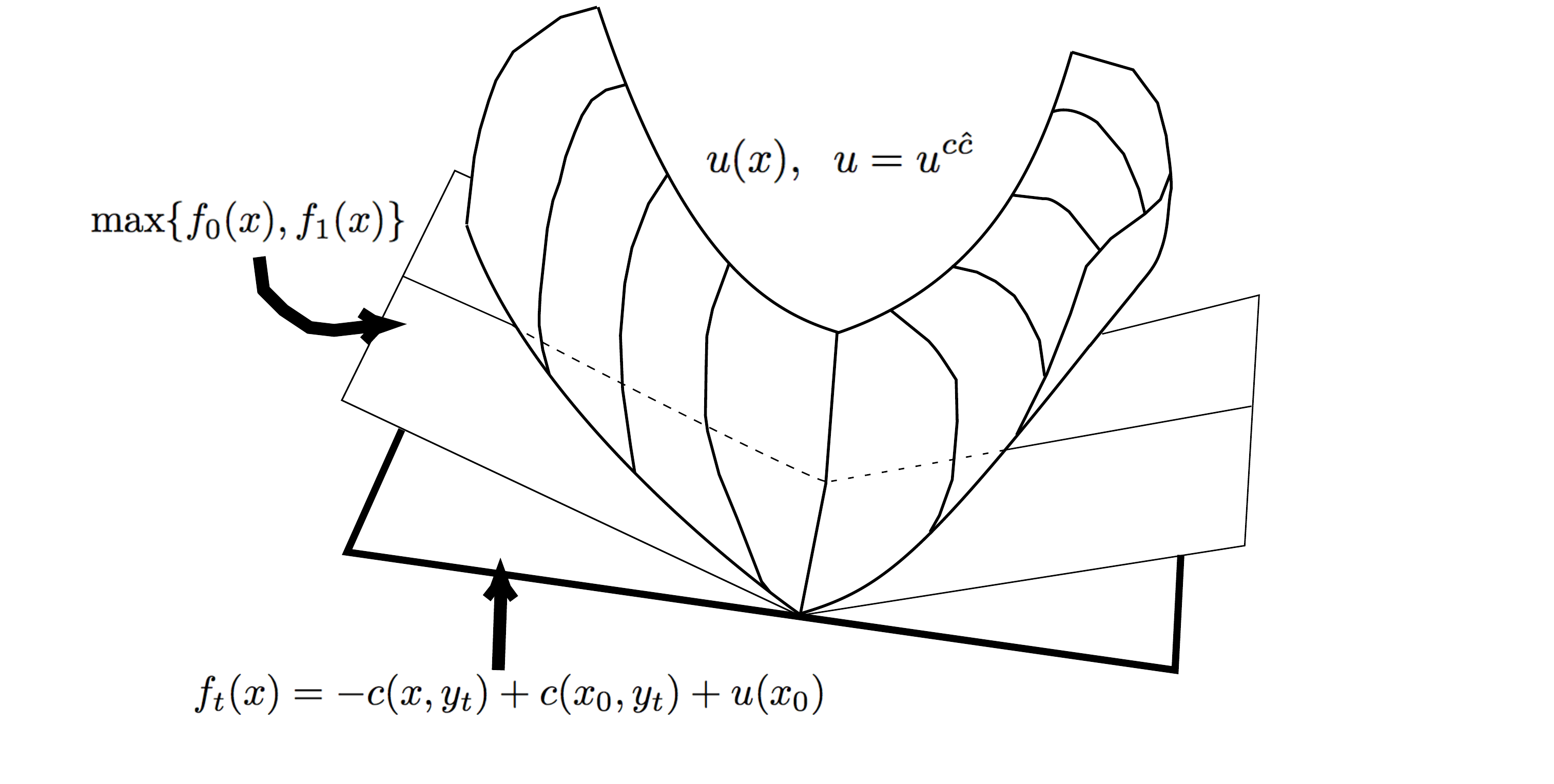}
\end{center}
\centerline{Figure 5. Can one $c$-affine support at $x_0$ be rotated to another, without exceeding $u$?}

\smallskip\noindent
In particular one may continuously rotate the first plane about the point $(x_0,u(x_0))$
without ever crossing the graph of $u$ until it agrees with the second plane giving a
one-parameter family of supporting planes to $u$ at the same point. This way one sees
that $\partial u({x_0})$ contains a ``segment'' $\{y_t\}_{t\in(0,1)}$. In this special
case $y_t=(1-t)y_0+ty_1$ where $y_0$ and $y_1$ are the slopes of the original supporting
hyperplanes.  In the general case, the corresponding local picture forces the
``c-segment'' $\{y_t\}_{t\in(0,1)}$ given by
\begin{equation}\label{c-segment}
D_xc(x_0,y_t)=(1-t)D_xc(x_0,y_0)+tD_xc(x_0,y_1)
\end{equation}
to be our only hope for a continuous path connecting $y_0$ to $y_1$ in $\partial^c u(x_0)$.

Now, were $G$ to exhibit a discontinuity, this construction suggests $G$ has to transport a very small mass around $x_0$ into a set with very large mass, which would give a contradiction given the constraint $G_\# \mu^+ = \mu^-$ and the assumptions on $\mu^\pm$. This is indeed the case, at least under the stronger assumption \Athree$_s$, as we shall see below (Proposition \ref{P:sausage-ball} and Theorem \ref{T:Loeper}).

All of this is of course contingent on whether the entire family of functions $\{g_t\}_{t}$
lie below $u(x)$,
which might not be true for an arbitrary cost $c$; see Figure 5.
Indeed, Loeper's key observation is that the
Ma-Trudinger-Wang condition \Athree\ is what guarantees that any family of functions
$f_t(y)=-c(x,y_t)+c(x_0,y_t)$ with $y_t$ satisfying (\ref{c-segment}) never goes above $u(x)$.
More precisely, it remains below $\max\{f_0(x),f_1(x)\}$.

\begin{thm}[Loeper's maximum principle \cite{Loeper09} \cite{KimMcCann07p}]\label{T:DASM}
If \Azero--\Afour\ hold and $x_0 \in M^+$ and $(y_t)_{t \in[0,1]} \subset M^-$
satisfy \eqref{c-segment}, then
\begin{equation}\label{DASM}
f(x,t) :=-c(x,y_t)+c(x_0,y_t)\leq \max \{ f(x,0),f(x,1)\} \quad \forall\ (x,t) \in M^+ \times [0,1].
\end{equation}
\end{thm}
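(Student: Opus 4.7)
The plan is to fix $x \in M^+$ and reduce the inequality to a one-dimensional statement about $h(t) := f(x,t)$ on $[0,1]$. By a standard perturbation and approximation argument, it suffices to prove the infinitesimal version: at any interior critical point $t_0 \in (0,1)$ of $h$, one has $h''(t_0) \ge 0$. (Under the strict hypothesis \Athrees\ this inequality would be strict, forcing every critical point to be a strict local minimum and ruling out any interior local maximum of $h$; the general case follows by approximating $c$ by strictly cross-curved perturbations and passing to the limit.)

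To compute $h''(t_0)$, I would first differentiate the $c$-segment equation $c_{i,j}(x_0, y_t)\, \dot y_t^j = \xi_i$ --- where $\xi := D_x c(x_0, y_1) - D_x c(x_0, y_0)$ is constant in $t$ by \eqref{c-segment} --- to express $\ddot y_t$ in terms of $\dot y_t$ and third derivatives of $c$ at $(x_0, y_t)$. Substituting into the chain rule writes $h''(t)$ as a bilinear combination of data at the two points $(x_0, y_t)$ and $(x, y_t)$, and the critical condition $h'(t_0)=0$ becomes the scalar identity $\eta_j\, \dot y_{t_0}^j = 0$, where $\eta_j := c_{,j}(x, y_{t_0}) - c_{,j}(x_0, y_{t_0})$.

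The crucial step is to introduce a companion $c$-segment $s \in [0,1] \mapsto x_s \in M^+$ from $x_0$ to $x$ with respect to $y_{t_0}$, defined by $c_{,j}(x_s, y_{t_0}) = (1-s)\, c_{,j}(x_0, y_{t_0}) + s\, c_{,j}(x, y_{t_0})$; hypothesis \Afour\ is exactly what guarantees such a segment lies inside $M^+$. Setting $H(s)$ equal to $h''(t_0)$ with $x$ replaced by $x_s$, one checks that $H(0)=0$ trivially and that $H'(0)=0$ because two contributions cancel by virtue of the $c$-segment ODE for $y_t$. Using the corresponding ODE for $x_s$ in the computation of the second $s$-derivative then produces
\[
H''(s) = \bigl(-c_{ij,kl} + c_{ij,r}\, c^{r,m}\, c_{m,kl}\bigr)\big|_{(x_s,\, y_{t_0})}\, \dot x_s^i\, \dot x_s^j\, \dot y_{t_0}^k\, \dot y_{t_0}^l = \cross(\dot x_s, \dot y_{t_0})\big|_{(x_s,\, y_{t_0})}.
\]
Since $c_{i,j}(x_s, y_{t_0})\, \dot x_s^i = \eta_j$ is preserved throughout the $s$-interval, the critical identity $\eta_j\, \dot y_{t_0}^j = 0$ yields the $c$-orthogonality $\dot x_s^i\, c_{i,j}(x_s, y_{t_0})\, \dot y_{t_0}^j = 0$ at every $s$. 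Hypothesis \Athree\ therefore forces $H''(s) \ge 0$, and Taylor's theorem closes the argument: $h''(t_0) = H(1) = \int_0^1 (1-s)\, H''(s)\, ds \ge 0$.

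The hard part will be the explicit verification that the second $s$-derivative above really does collapse to the Ma--Trudinger--Wang expression $-c_{ij,kl} + c_{ij,r}\, c^{r,m}\, c_{m,kl}$: this requires careful tracking of the two intertwined $c$-segment ODEs (one for $y_t$ based at $x_0$, the other for $x_s$ based at $y_{t_0}$) and of the cancellations among their respective second-order correction terms. Hypothesis \Afour\ is essential in order for the companion segment $x_s$ to remain inside $M^+$, and without \Athree\ the counterexamples of \S\ref{S:counterexample} show that the maximum principle genuinely fails.
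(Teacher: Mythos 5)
Your proposal is correct and follows essentially the same route as the paper's sketch: fix $x$, reduce to a second-order inequality $\partial_{tt}f(x,t_0)\ge 0$ at interior critical points $t_0$, introduce the companion $c$-segment $s\mapsto x_s$ from $x_0$ to $x$ with respect to $y_{t_0}$ (this is where \Afour\ enters), and consider the auxiliary function (your $H(s)$, the paper's $g(s)$) $H(s)=\partial_{tt}f(x_s,t_0)$. Your observations that $H(0)=0$, $H'(0)=0$ by the $c$-segment ODE for $y_t$, and that $H''(s)$ is the cross-curvature $\cross(\dot x_s,\dot y_{t_0})$ match the paper's Claims 1 and 2, with the identification $H''(s)=\cross$ being exactly the content of Lemma~\ref{L:cross-curvature} (which you flag as the ``hard part'' but the paper simply cites); the orthogonality $\dot x_s^i c_{i,j}\dot y_{t_0}^j=0$ is obtained in the paper by the same argument you give (the quantity $c_{i,j}(x_s,y_{t_0})\dot x_s^i$ is $s$-independent and its contraction with $\dot y_{t_0}$ integrates to $-\partial_t f(x,t_0)=0$). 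The only substantive addition in your write-up is the explicit suggestion to handle the non-strict case \Athree\ by approximation; the paper's sketch simply assumes \Athree$_s$ ``for simplicity,'' so your remark is a reasonable attempt to close that gap, though you should be aware that one cannot in general perturb a given $c$ within \Azero--\Afour\ so as to achieve \Athrees\ (the standard fix is instead a continuity/limiting argument in the conclusion \eqref{DASM} itself, or a compactness argument on the set where the maximum is attained).
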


\begin{rem}(\cite{KimMcCann08p})\label{R:convex DASM}
If in addition, \Bthree\ holds,  then $t \in [0,1] \longrightarrow f(x,t)$ is convex .
\end{rem}

Loeper's original proof was quite tortuous,  relying on global regularity results
for optimal transportation already established by Trudinger and Wang \cite{TrudingerWang09b}.
Here we sketch instead a simple, direct proof due to Kim and McCann \cite{KimMcCann07p},
who later added Remark \ref{R:convex DASM}.
A preliminary lemma gives some insight into the relevance of the cross-curvature.

\begin{lem}[A non-tensorial expression for cross-curvature \cite{KimMcCann07p}]\label{L:cross-curvature}
Assuming \Azero--\Afour, if
$(x(s))_{-1\leq s\leq 1} \subset M^+$ and $(y(t))_{-1\leq t\leq 1} \subset M^-$ satisfy either
$$
\frac{d^2}{dt^2}\bigg|_{t=0} D_xc(x(0),y(t))=0 \qquad {\rm or} \qquad
\frac{d^2}{ds^2}\bigg|_{s=0} D_yc(x(s),y(0))=0,
$$
then $\cross 
(\dot{x}(0),\dot{y}(0))=-\frac{\partial^4}{\partial^2s\partial^2t} \Big|_{s=0=t} c(x(s),y(t)).$
\end{lem}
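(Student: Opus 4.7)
The plan is to expand the mixed fourth derivative $\frac{\partial^4}{\partial s^2\partial t^2}\big|_{0,0} c(x(s),y(t))$ directly by the chain rule, and then use the hypothesis to force the unwanted terms involving the acceleration of the $x$-curve to drop out, so that what remains is exactly the cross-curvature tensor. Write $p^i := \dot x^i(0)$, $a^i := \ddot x^i(0)$, $q^k := \dot y^k(0)$, $b^k := \ddot y^k(0)$, and evaluate every derivative of $c$ at the basepoint $(x(0),y(0))$. Since the two alternative hypotheses play symmetric roles (exchange of $x$ and $y$), it suffices to treat the first one.

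First I would compute $\frac{\partial^2}{\partial s^2} c(x(s),y(t)) = c_{ij}\dot x^i\dot x^j + c_i\ddot x^i$ by the chain rule, and then differentiate this twice in $t$. Setting $s=t=0$ yields the four-term expansion
\begin{equation*}
\tfrac{\partial^4}{\partial s^2\partial t^2}\bigg|_{0,0}\!\!c(x(s),y(t)) \;=\; c_{ij,kl}\,p^ip^jq^kq^l + c_{ij,k}\,p^ip^jb^k + c_{i,kl}\,a^iq^kq^l + c_{i,k}\,a^ib^k.
\end{equation*}

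Next I would invoke the hypothesis $\frac{d^2}{dt^2}|_{t=0} D_x c(x(0),y(t)) = 0$, which in coordinates reads $c_{i,kl}\,q^kq^l + c_{i,k}\,b^k = 0$ for every index $i$. Contracting this identity with $a^i$ shows that the last two terms of the expansion cancel, leaving
\begin{equation*}
\tfrac{\partial^4}{\partial s^2\partial t^2}\bigg|_{0,0}\!\!c \;=\; c_{ij,kl}\,p^ip^jq^kq^l + c_{ij,k}\,p^ip^jb^k.
\end{equation*}
Using the non-degeneracy \Atwo\ to invert the matrix $c_{i,k}$, solve the hypothesis for $b^k = -c^{k,r}c_{r,lm}q^lq^m$ and substitute into the remaining $b^k$.

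Finally, after renaming dummy indices in the cubic-in-$c$ term so as to match the definition of $\cross$, one obtains
\begin{equation*}
\tfrac{\partial^4}{\partial s^2\partial t^2}\bigg|_{0,0}\!\!c \;=\; \bigl(c_{ij,kl} - c_{ij,r}\,c^{r,m}\,c_{m,kl}\bigr)p^ip^jq^kq^l \;=\; -\cross(p,q),
\end{equation*}
which is exactly the claimed identity. The only subtle step is the index bookkeeping with the inverse $c^{r,m}$; the conceptual point is that the hypothesis is precisely the condition that eliminates dependence on $\ddot x(0)$ (and, by symmetry in the alternative case, on $\ddot y(0)$), so that the mixed fourth derivative becomes intrinsic to the tangent data $(p,q) \in T_{x_0}M^+\times T_{y_0}M^-$ and coincides (up to sign) with the tensorial quantity $\cross(p,q)$.
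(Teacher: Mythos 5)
Your proof is correct. The paper itself omits the proof of this lemma, remarking only that it is analogous to the Riemannian expansion of $d^2(x(s),y(t))$ in terms of sectional curvature and referring the reader to \cite{KimMcCann07p}; so there is no in-paper argument to compare against, and your direct chain-rule computation is precisely the calculation the cited reference would give. The expansion
$c_{ij,kl}\,p^ip^jq^kq^l + c_{ij,k}\,p^ip^jb^k + c_{i,kl}\,a^iq^kq^l + c_{i,k}\,a^ib^k$
is right, contracting the hypothesis $c_{i,kl}q^kq^l + c_{i,k}b^k = 0$ with $a^i$ indeed kills the two $a$-dependent terms, and inverting $c_{i,k}$ via \Atwo\ to write $b^k = -c^{k,r}c_{r,lm}q^lq^m$ converts the remaining $b$-term into the cubic-in-$c$ piece of $-\cross(p,q)$. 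The appeal to symmetry for the second alternative hypothesis is legitimate: both the mixed fourth derivative (under $s \leftrightarrow t$) and the tensor $\cross(p,q)$ (under the exchange $c(x,y) \leftrightarrow \tilde c(y,x)$, which swaps $p \leftrightarrow q$) are invariant under interchanging the roles of $x$ and $y$.
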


This lemma is precisely analogous to the formula for the distance between two arclength
parameterized geodesics $x(s)$ and $y(t)$ passing through $x(0)=y(0)$ in a Riemannian manifold:
$$
d^2(x(s),\bar x(t)) = s^2 + t^2 - 2 st \cos\theta - \frac{k}{3} s^2 t^2 \sin^2\theta + O((s^2+t^2)^{5/2})
$$
where $\theta$ is the angle between $\dot x(0)$ and $\dot y(0)$ and $k$ is the sectional curvature
of the plane which they span.  Therefore, we will not give its proof.

\begin{proof}[Proof of  Remark \ref{R:convex DASM} and sketch of Theorem \ref{T:DASM}]

    Assume \Athree$_s$ for simplicity. It suffices to prove the following claim. \\

    Claim 1: if $\frac{\partial f}{\partial t}(x,t_0)=0$ then $\frac{\partial^2 f}{\partial t^2}(x,t_0)>0$. \\

{\em Proof of Claim 1:}  Convexity \Afour\ allows us to define $s \in [0,1] \longmapsto x(s)$ by
\begin{equation}\label{s-segment}
D_y c(x(s),y(t_0))=(1-s)D_yc(x_0,y(t_0))+sD_yc(x,y(t_0))
\end{equation}
and $g(s)=\frac{\partial^2 f}{\partial t^2}(x(s),t_0)$. Our claim is that $g(1)>0$.
    Since $f(x_0,t) = 0$ and hence $g(0)=0$,  to prove Claim 1 it suffices to establish
    strict convexity in Claim 2. \\

    Claim 2: $g:[0,1]\to\R$ is convex, and minimized at $s=0$.\\

{\em Proof of Claim 2:} Once $g(s)$ is known to be convex, we need only observe that
$$
g'(0)=-\frac{\partial^3}{\partial s\partial t^2}\bigg|_{s=0,t=t_0}c(x(s),y(t))
$$
vanishes by our choice \eqref{c-segment} of $y(t)=y_t$, to conclude $g(s)$ is minimized at $s=0$.

    Why should $g(s)$ be convex?  Note that
    $g''(s)=-\frac{\partial^4 }{\partial s^2\partial t^2}\Big|_{t=t_0}c(x(s),y(t))$
    is already non-negative according to Lemma \ref{L:cross-curvature} if we assume \Bthree.
    Remark \ref{R:convex DASM} is thereby established.  Under the weaker condition \Athree,
    we need $\dot{x}^i(s)c_{i,j}\dot{y}^j(t_0)=0$
to conclude $g(s)$ is convex --- and strictly convex if \Athree$_s$ holds.
But
$$
0 = \frac{\partial f}{\partial t}(x,t_0) = \int_0^1 c_{i,j}(x(s),y(t_0)) \dot x^i(s) \dot y^j(t_0) ds
$$
and the integrand is constant by our construction \eqref{s-segment} of $x(s)$.
\end{proof}

To deduce the continuity result of the next section,  the following corollary is crucial.

\begin{cor} 
\label{C:local-global}
Assume \Azero--\Afour\ and fix $(x_0,y_0) \in M^+_{int} \times M^-$.  If $u=u^{\tc c}$ satisfies
\begin{equation}\label{local support}
u(x) \ge -c(x,y_0) + c(x_0,y_0) + u(x_0)
\end{equation}
in a neighbourhood of $x_0$,  the same equality holds for all $x \in M^+$.
\end{cor}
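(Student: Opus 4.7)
The plan is to identify $y_0$ as an element of $\partial^c u(x_0)$ by passing through the Fr\'echet subdifferential $\partial u(x_0)$, using Loeper's maximum principle (Theorem \ref{T:DASM}) as the crucial geometric input. First, since $u$ is $c$-convex, Lemma \ref{L:Lipschitz} guarantees that $u$ is locally Lipschitz and semiconvex, so $\partial u(x_0) \subset T_{x_0}^* M^+$ is a nonempty closed convex set at the interior point $x_0 \in M^+_{int}$. The local support hypothesis, combined with smoothness of $c$ under (\Azero), immediately yields
\[
-D_x c(x_0, y_0) \,=\, D g_0(x_0) \,\in\, \partial u(x_0),
\]
where $g_0(x) := -c(x, y_0) + c(x_0, y_0) + u(x_0)$, because $g_0$ is a smooth function touching $u$ from below at $x_0$.

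The heart of the proof is to establish the structural identity
\[
\partial u(x_0) \,=\, -D_x c(x_0,\,\partial^c u(x_0)).
\]
The inclusion $\supseteq$ is immediate: any $\bar y \in \partial^c u(x_0)$ yields the smooth global support $-c(\cdot, \bar y) + c(x_0, \bar y) + u(x_0)$ of $u$ at $x_0$, whose gradient there equals $-D_x c(x_0, \bar y)$. For $\subseteq$, the Clarke-type representation for locally Lipschitz semiconvex functions writes any $p \in \partial u(x_0)$ as a convex combination of reachable gradients $p_i = \lim_n D u(x_n^{(i)})$ along differentiability points $x_n^{(i)} \to x_0$ (available by Rademacher's theorem). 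At each such $x_n$, the $c$-convexity of $u$ together with (\Aone) forces $Du(x_n) = -D_x c(x_n, y_n)$ for the unique $y_n = Y(x_n, Du(x_n)) \in \partial^c u(x_n)$; continuity of $Y$ supplied by the implicit function theorem under (\Aone)--(\Atwo) gives $y_n \to \bar y_i := Y(x_0, p_i) \in M^-$, and passing to the limit in the global support inequality $u(\cdot) \ge -c(\cdot, y_n) + c(x_n, y_n) + u(x_n)$ yields $\bar y_i \in \partial^c u(x_0)$ with $p_i = -D_x c(x_0, \bar y_i)$. Convex combinations of such reachable gradients stay in $-D_x c(x_0, \partial^c u(x_0))$ because this set is convex, and this convexity is where Theorem \ref{T:DASM} enters in full force: given any $\bar y^{(0)}, \bar y^{(1)} \in \partial^c u(x_0)$, the $c$-segment $\bar y_t$ joining them (well-defined by (\Afour)) centered at $x_0$ satisfies, for $\phi_t(x) := -c(x, \bar y_t) + c(x_0, \bar y_t) + u(x_0)$, the bound $\phi_t(x) \le \max\{\phi_0(x), \phi_1(x)\} \le u(x)$ globally while $\phi_t(x_0) = u(x_0)$, forcing $\bar y_t \in \partial^c u(x_0)$. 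By the defining property of a $c$-segment, $c$-convexity of $\partial^c u(x_0)$ is precisely convexity of $-D_x c(x_0, \partial^c u(x_0))$.

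With the identification in hand, the conclusion is immediate: since $-D_x c(x_0, y_0) \in \partial u(x_0) = -D_x c(x_0, \partial^c u(x_0))$, there is some $\bar y \in \partial^c u(x_0)$ with $-D_x c(x_0, y_0) = -D_x c(x_0, \bar y)$, and the injectivity asserted by the twist (\Aone) forces $y_0 = \bar y$, i.e., $y_0 \in \partial^c u(x_0)$---which is exactly the desired global inequality. The \emph{main obstacle} is the inclusion $\partial u(x_0) \subseteq -D_x c(x_0, \partial^c u(x_0))$ in the middle paragraph: it combines the Clarke-type representation of the Fr\'echet subdifferential of a semiconvex function with the DASM-based $c$-convexity of the $c$-subdifferential, together with the continuity of $Y$ required to transfer each reachable gradient from a neighbouring differentiability point back to a global $c$-support at $x_0$. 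Once this dictionary between Fr\'echet and $c$-subdifferentials is in place, the corollary follows at once from the twist.
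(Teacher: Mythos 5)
Your argument is essentially the same as the paper's: both identify $p_0=-D_xc(x_0,y_0)\in\partial u(x_0)$, show the set of "good" slopes (those arising from a $c$-support at $x_0$) is convex via Theorem~\ref{T:DASM}, and then fill in all of $\partial u(x_0)$ by approximating extreme points (equivalently, reachable gradients) from nearby differentiability points $x_k\in\dom Du$, finishing with the twist. Your explicit identity $\partial u(x_0)=-D_xc(x_0,\partial^cu(x_0))$ is exactly what the paper records parenthetically as $P=\partial u(x_0)$, so this is the same route with a slightly more structured presentation.
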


\begin{proof}
The local inequality \eqref{local support} implies $p_0 := -D_x c(x_0,y_0) \in \partial u(x_0)$.
If $x_0 \in \dom Du$,  the conclusion is easy.  The global inequality
$$
u(x) \ge -c(x,y_1) + c(x_0,y_1) + u(x_0)
$$
holds for any $(x_0,y_1) \in \partial^c u$,  and for
$y_1 \in \arg\max_{y \in M^-} -c(x_0,y) - u^\tc(y)$ in particular.
The twist condition \Aone\ then implies $y_0=y_1$.

Even if $x_0 \not\in \dom Du$, taking e.g.\ $p = -D_x c(x_0,y_1)$ yields
\begin{equation}\label{levelsetconvex}
-c(x,Y(x_0,p)) + c(x_0,Y(x_0,p)) \le u(x) - u(x_0) \qquad \forall x \in M^+.
\end{equation}
In fact, the set $P =\{ p \in M^-_{x_0} \mid
\eqref{levelsetconvex} {\rm\ holds}\}$ is convex,  according to Theorem \ref{T:DASM}.
On the other hand,  $P$ includes all the extreme points $p$ of $\partial u(x_0)$,
since the preceding argument can be applied to a sequence
$(x_k,y_k) \in \partial^c u \cap (\dom Du \times M^-)$ with
$(x_k,Du(x_k)) \to (x_0,p)$.  Thus $P \supset \partial u(x_0)$,
whence $p_0 \in P$ as desired.  (In fact, $P = \partial u(x_0)$).
\end{proof}

\subsection{Interior H\"older continuity for optimal maps}

To conclude our discussion on regularity of optimal mappings,
let us sketch Loeper's H\"older continuity result \cite{Loeper09}.

\begin{thm}[Loeper '09]\label{T:Loeper}
Assume \Azero--\Atwo and \Afour.
(i) If \Athree\ is violated,  there exist
probability densities with $\log f^\pm \in C^\infty(M^\pm) \cap L^\infty$ and
a discontinuous optimal map $G:M^+_{int} \longrightarrow M^-$ satisfying $G_\#(f^+ dVol) = f^- dVol$.
(ii) Conversely, if \Athree$_s$ holds and $f^+/f^- \in L^\infty(M^+ \times M^-)$, then
$G \in C^{\frac{1}{4n-1}}_{loc}(M^+_{int},M^-)$.
\end{thm}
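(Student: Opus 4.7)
My plan for part (i) is to convert the failure of \Athree\ at a single point into a macroscopic disconnectedness of the $c$-subdifferential, and then transfer the resulting tear to a smooth problem. If \Athree\ fails at $(x_0,y_0)$ then by Lemma \ref{L:cross-curvature} there exist $(p,q)$ with $p^i c_{i,j}(x_0,y_0)q^j = 0$ and $\cross(p,q) < 0$. Running Claim 1 in the proof of Theorem \ref{T:DASM} in reverse yields $y_1$ near $y_0$ along $q$ and $x_\pm = x_0 \pm \epsilon p + o(\epsilon)$ for which
\begin{equation*}
    f(x_\pm,\tfrac12) \;>\; \max\{f(x_\pm,0),f(x_\pm,1)\}
\end{equation*}
strictly, where $f(x,t) = -c(x,y_t)+c(x_0,y_t)$ and $y_t$ is the $c$-segment \eqref{c-segment}. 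Tune $v_0,v_{1/2},v_1$ so that $u_0(x) := \max_{i \in \{0,1/2,1\}}\{-c(x,y_i)-v_i\}$ realizes each branch on a set of comparable Lebesgue measure inside a small neighbourhood $\Omega$ of $x_0$; the quantitative reversal above forces the middle stratum $\Omega_{1/2} := \{u_0 = -c(\cdot,y_{1/2})-v_{1/2}\}$ to split into two components on opposite sides of the $c$-segment (cf.~Figure 2). Set $\nu^+ := |\Omega|^{-1}\chi_\Omega dx$ and $\nu^- := \tfrac13 \sum_i \delta_{y_i}$, and regularize to $\nu^-_\epsilon := \nu^- * \eta_\epsilon$, which is smooth and positive near $\{y_0,y_{1/2},y_1\}$. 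Applying Theorem \ref{T:GangboLevin} with cost $\tilde c(y,x) := c(x,y)$ yields a unique optimal map $G_\epsilon$ from $\nu^-_\epsilon$ to $\nu^+$. Weak-$*$ stability of Kantorovich plans forces $G_\epsilon$ to map the small ball $B(y_{1/2},\epsilon)$ into a Hausdorff-neighbourhood of the \emph{disconnected} set $\Omega_{1/2}$, which is impossible for a continuous map. Relabelling $\nu^-_\epsilon =: f^+\,dVol$ and $\nu^+ =: f^-\,dVol$ gives the required counterexample.

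For (ii), the plan is to prove directly that $\mathrm{diam}\,\partial^c u(x_0)$ admits a Hölder-type bound in $r := d(x_0,\partial M^+)$; since $Y(x,\cdot)$ is $C^1$ under \Atwo, this upgrades to the claimed Hölder estimate for $G(x) = Y(x,Du(x))$. Fix $x_0 \in M^+_{int}$ and suppose $y_0, y_1 \in \partial^c u(x_0)$ with $\delta := |y_0 - y_1| > 0$. Combining Theorem \ref{T:DASM} with Corollary \ref{C:local-global} deposits the entire $c$-segment $\{y_t\}_{t \in [0,1]}$ of \eqref{c-segment} into $\partial^c u(x_0)$: each $g_t(x) := -c(x,y_t)+c(x_0,y_t)+u(x_0)$ satisfies $g_t \leq u$ globally with equality at $x_0$. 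Under \Athrees, a strict form of Claim 1 in the proof of Theorem \ref{T:DASM} upgrades the maximum principle to a quantitative transverse bulge: there exist $c_0, \rho > 0$ such that for all $|x - x_0| \leq \rho$,
\begin{equation*}
    u(x) \;\geq\; \max_{t \in [0,1]} g_t(x) \;\geq\; \max\{g_0(x), g_1(x)\} + c_0\,\delta^2\,|x_\perp|^2 - O(|x-x_0|^3),
\end{equation*}
where $x_\perp$ denotes the component of $x - x_0$ in $T_{x_0}M^+$ transverse to the direction conjugate to $y_1 - y_0$ under the matrix $c_{i,j}(x_0,y_0)$.

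From this quantitative bulge combined with the convexity supplied by \Afour, the image $\partial^c u(B_r(x_0))$ contains a ``$c$-sausage'' around $\{y_t\}$ of longitudinal extent $\sim \delta$ and transverse cross-section of radius $\rho_\perp = \rho_\perp(r,\delta)$, determined by the scale at which the bulge matches $r$ (roughly where $\delta^2 |x_\perp|^2 \sim r$). Mass conservation $G_\# \mu^+ = \mu^-$ together with $f^+/f^- \in L^\infty$ then forces
\begin{equation*}
    C r^n \;\geq\; \int_{B_r(x_0)} f^+\,dx \;=\; \int_{\partial^c u(B_r(x_0))} f^-\,dy \;\geq\; c\,\delta\,\rho_\perp^{n-1},
\end{equation*}
and careful book-keeping of the quadratic degeneracy of the bulge in both the $r$- and $\delta$-scalings, together with dimension counting in the $n-1$ transverse directions, produces the sharp exponent $\delta \leq C r^{1/(4n-1)}$. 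The \emph{main obstacle} is precisely this sharp sausage-size estimate: it requires strict positivity of $\cross$ on the constraint surface from \Athrees, combined with a delicate affine normalization in the cost-exponential coordinates $p \mapsto Y(x_0,p)$ from \Aone--\Atwo, and is where one genuinely invokes \Athrees\ rather than merely \Athree.
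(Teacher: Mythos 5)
Your sketch of part (i) is essentially the argument the paper gives (regularizing three point masses, reversing the roles of source and target, and invoking stability of Kantorovich plans to produce a tear over the disconnected middle stratum), and I have no real complaint there: the only thing left implicit is that after running Claim~1 of Theorem \ref{T:DASM} in reverse you must also perturb $v_{1/2}$ slightly upward so the middle branch actually loses near $x_0$ while still winning at $x_\pm$; without that tilt the three branches merely tie at $x_0$.

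Part (ii), however, has a genuine gap in the setup that prevents the argument from closing. You propose to bound $\mathrm{diam}\,\partial^c u(x_0)$ in terms of $r := d(x_0,\partial M^+)$ for a \emph{single} point $x_0$. Even if you succeeded, that would not give the H\"older modulus $|G(x_1)-G(x_0)|\lesssim |x_1-x_0|^{1/(4n-1)}$, which requires comparing subdifferentials at two \emph{distinct} nearby points; a bound on $\mathrm{diam}\,\partial^c u(x_0)$ in terms of distance to the boundary is a statement about single-valuedness of $G$ with a modulus that does not improve as $x_1\to x_0$. The paper's Proposition \ref{P:sausage-ball} starts from two points $x_0\ne x_1$ in $M^+$ with $y_i=G(x_i)$, sets $\Delta x=x_1-x_0$, $\Delta y=y_1-y_0$, and shows the preimage of a sausage of transverse radius $\delta\sim \epsilon|\Delta y|^2$ around the middle third of the $c$-segment joining $y_0$ to $y_1$ is contained in a ball $B_\epsilon(x)$ centred \emph{between} $x_0$ and $x_1$, with $\epsilon\sim\sqrt{|\Delta x|/|\Delta y|}$. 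Note that $\epsilon$ is \emph{not} $d(x_0,\partial M^+)$ nor $|\Delta x|$; when $|\Delta x|\lesssim|\Delta y|^5$ this $\epsilon$ is much larger than $|\Delta x|$, which is precisely what lets the ball swallow both endpoints and captures the cost of moving the sausage. Your $r$ and $\rho_\perp$ do not track these two scales, and ``careful book-keeping... produces the sharp exponent'' cannot substitute for deriving the relations $\delta\sim\epsilon|\Delta y|^2$, $\epsilon^2\sim|\Delta x|/|\Delta y|$, and the resulting chain $\delta^{n-1}|\Delta y|\lesssim\mu^-(S_\delta)=\mu^+(G^{-1}(S_\delta))\lesssim\epsilon^n$, which is exactly what forces $|\Delta y|^{4n-1}\lesssim|\Delta x|$. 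In short: the ingredients you name (strict positivity from \Athrees, the quantitative version of Theorem \ref{T:DASM}, Corollary \ref{C:local-global}, mass conservation) are all the right ones, but the two-point geometry and the two coupled scalings that actually determine the exponent $1/(4n-1)$ are missing, and your declared ``main obstacle'' is precisely the content of Proposition \ref{P:sausage-ball}, which you cannot simply defer.
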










In one dimension $n=1$,  we see $G$ is Lipschitz directly from the
equation $G'(x) = f^+(x)/f^-(G(x))$.  In higher dimensions,
this theorem is a direct consequence of the following proposition,
whose inequalities $\sim$ hold up to multiplicative constants depending only on the cost $c$,
and in particular on the size of the uniform modulus of positivity in condition \Athree$_s$.

\begin{prop}[Sausage into ball \cite{Loeper09}]\label{P:sausage-ball}
Assuming the hypotheses and notation of Theorem \ref{T:Loeper}(ii),
take $x_0,x_1 \in M^+$ and set $\Delta x = x_1-x_0$ and $\Delta y = y_1 - y_0$
where $y_i = G(x_i)$.
If $|\Delta x|\lesssim |\Delta y|^{5}$,
there is a ball $B_\epsilon(x) \supset G^{-1}(S_\delta)$
of radius $\epsilon \sim \sqrt{\frac{|\Delta x|}{|\Delta y|}}$
centered on the line segment joining $x_0$ to $x_1$, containing the preimage of the ``sausage''
$$S_\delta = \{ y \in M^-\;| \inf\limits_{t \in [1/3,2/3]}|y-y_t|\leq \delta \}$$
of radius $\delta \sim \epsilon |\Delta y|^2$ around the middle third
of the curve $(y_t)_{t \in [0,1]} \subset M^-$ satisfying $0=\frac{d^2}{dt^2} D_x c(x,y_t)$.
\end{prop}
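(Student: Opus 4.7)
The plan is to recast the proposition as a statement about $c$-supports of the potential $u$, and then invoke a quantitative sharpening of Loeper's maximum principle (Theorem~\ref{T:DASM}) available under \Athrees. Since $G(x)=Y(x,Du(x))$, we have $G(x)=y$ precisely when $y\in\partial^c u(x)$, i.e.\ when the $c$-support $z\mapsto -c(z,y)+c(x,y)+u(x)$ lies beneath $u$ globally. So it suffices to show that for each $t_*\in[1/3,2/3]$ and each $y$ within $\delta$ of $y_{t_*}$, any $x_*$ at which such a support touches $u$ must lie in a ball of radius $\epsilon$ centred on the segment $[x_0,x_1]$; I would first handle $y=y_{t_*}$ exactly and then absorb the $\delta$-thickening by a perturbation at the end.

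The main quantitative input will be a strengthening of Theorem~\ref{T:DASM}. In its proof we saw that, along the $c$-segment $s\mapsto x(s)$ from $x_0$ to $x$, the function $g(s)=\partial_t^2 f(x(s),t_0)$ satisfies $g(0)=g'(0)=0$ while $g''(s)$ equals the cross-curvature $\cross(\dot x(s),\dot y(t_0))$, bounded below under \Athrees\ by a positive multiple of $|\dot x|^2|\dot y|^2$ once the orthogonality $c_{i,j}\dot x^i\dot y^j=0$ is arranged by appropriate choice of base point. Integrating twice in $s$ and twice in $t$ should promote DASM to the quantitative estimate
\begin{equation*}
f(x,t)\;\le\;\max\{f(x,0),f(x,1)\}\;-\;\kappa\,t(1-t)\,|x-x_0|^2\,|y_1-y_0|^2\;+\;(\text{higher-order terms})
\end{equation*}
with $\kappa>0$ depending only on the cost. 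The hypothesis $|\Delta x|\lesssim|\Delta y|^5$ is exactly what allows the Taylor remainders of $c$ to be absorbed into the second-order cross-curvature term.

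The localisation then proceeds by combining this quantitative bound with the three $c$-support inequalities $u\ge u_{y_0}$ at $x_0$, $u\ge u_{y_1}$ at $x_1$, and $u\ge u_{y_{t_*}}$ at $x_*$; evaluating each at the other relevant vertices and adding, an algebraic cancellation leaves a geometric constraint of the shape
\begin{equation*}
d^2\,|\Delta y|^2\;\lesssim\;|\Delta x|\,|\Delta y|,
\end{equation*}
where $d$ denotes the distance from $x_*$ to the segment $[x_0,x_1]$. Rearranging gives $d\lesssim\sqrt{|\Delta x|/|\Delta y|}\sim\epsilon$, placing $x_*$ in the required ball. The sausage width is then pinned down by perturbing $y_{t_*}\leadsto y\in S_\delta$: the support shifts by at most $|D_xD_yc|\cdot|x-x_*|\cdot\delta\sim\epsilon\,\delta$, and for the preceding localisation to survive this shift must be dominated by the DASM gain $\sim\epsilon^2|\Delta y|^2$, forcing exactly $\delta\lesssim\epsilon|\Delta y|^2$.

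The hardest part will be the non-degeneracy of the quantitative DASM bound. The cross-curvature lower bound from \Athrees\ carries the side condition $c_{i,j}p^iq^j=0$, so one must choose the base point in the $c$-segment defining $(y_t)$ carefully along $[x_0,x_1]$ so that the angle (in the $c_{i,j}$ metric) between $\dot y_{t_*}$ and the direction from that base point to $x_*$ stays bounded away from zero; otherwise the $\sin^2\vartheta$ factor in the cross-curvature estimate degenerates and the scaling collapses. A secondary technical obstacle is the bookkeeping of higher-order Taylor terms, which is precisely why the sharp restriction $|\Delta x|\lesssim|\Delta y|^5$ enters the hypothesis --- this is the regime in which the quadratic cross-curvature contribution genuinely dominates the quartic remainder.
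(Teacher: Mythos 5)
Your proposal is correct in outline but follows a genuinely different route from the paper's. The paper's argument (following \cite{KimMcCannAppendices}) is constructive: for each $y$ in the sausage it uses the quantitative DASM only on the fixed boundary $\partial B_\epsilon(x)$ to show $g_y\le u$ there, then slides $g_y$ down by the least constant $\lambda$ until it touches $u$ inside the ball, and finally invokes Corollary~\ref{C:local-global} to promote that local contact to global membership $(x_*,y)\in\partial^c u$; the inclusion $G^{-1}(S_\delta)\subset B_\epsilon(x)$ then rests on the essential uniqueness of $G^{-1}$ coming from bi-twistedness, which the paper leaves implicit. You instead argue directly on an arbitrary preimage point $x_*$: the global $c$-support inequality at $(x_*,y_{t_*})$ evaluated at the indifference point $x$ gives $u(x_*)\le f_{t_*}(x_*)$, the quantitative DASM gives $f_{t_*}(x_*)\le u(x_*)+u(x)-\kappa\, t_*(1-t_*)\,|x_*-x|^2|\Delta y|^2$, and the semiconvexity bound $u(x)\lesssim|\Delta x||\Delta y|+|\Delta x|^2$ (which you should state explicitly --- it is the lynchpin that turns your ``algebraic cancellation'' into the claimed scaling, and it is where the supports at $x_0$, $x_1$ and Lemma~\ref{L:Lipschitz} actually enter) then pins $|x_*-x|\lesssim\epsilon$. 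Your route cleanly avoids Corollary~\ref{C:local-global} and the sliding construction, and it proves the inclusion without appealing to uniqueness of $G^{-1}$; the price is that you must apply the quantitative DASM at the a priori unlocalized $x_*$ rather than on a fixed small boundary --- harmless on a compact domain, where the modulus from \Athrees\ is uniform, but worth flagging, since the paper's formulation of the quantitative DASM is restricted to ``$x'$ near $x$.'' Your remarks about the orthogonality side condition $c_{i,j}p^iq^j=0$ and the role of $|\Delta x|\lesssim|\Delta y|^5$ in dominating the Taylor remainders both accurately identify the genuine technical difficulties of the argument.
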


\begin{proof}[Proof of Theorem \ref{T:Loeper}(ii)]
At pairs of points $y_i = G(x_i)$ where $\|\Delta x\| \gtrsim \|\Delta y\|^5$ we already
have H\"older exponent $1/5$ --- even better than claimed.  At other points, using the
fact that $G$ is a transport map between $\mu^\pm = f^\pm dx$, the Proposition yields
$\mu^-(S_\delta)=\mu^+(G^{-1}(S_\delta)) \lesssim \|f^+\|_\infty \epsilon^n$,
but also $\delta^{n-1}|\Delta y| \inf\limits_{M^-}f^- \lesssim \mu^-(S_\delta)$.
Combining the squares of these two inequalities,  our choices
$\delta \sim \epsilon | \Delta y|^2$ and $\epsilon^2 \sim |\Delta x|/|\Delta y|$ yield
the desired H\"older estimate:
$$
\|(f^-)^{-1}\|^{-2}_\infty \epsilon^{2n-2}|\Delta y|^{4n-2}\lesssim \|f^+\|^2_\infty \epsilon^{2n-2} \frac{|\Delta x|}{|\Delta y|}.
$$
Thus $G\in C^{\frac{1}{4n-1}}_{loc}$.
\end{proof}

The proposition relies delicately on Corollary \ref{C:local-global}
and the correct choice of $\delta$ and $\epsilon$:

\begin{proof}[Proof sketch of Proposition \ref{P:sausage-ball}; c.f. \cite{KimMcCannAppendices}:]
According to Theorem \ref{T:GangboLevin}, the optimal map $G(x) = Y(x,Du(x))$
is given by a potential $u=u^{\tc c}$ and $Graph(G) \subset \partial^c u$.
Thus $(x_i,y_i) \in \partial^c u$,  meaning
$f_i(x) = -c(x,y_i) + c(x_i,y_i) + u(x_i)$
satisfies $u(x) \ge \max \{f_0(x),f_1(x)\}$ with equality at $x_0$ and $x_1$.
Take $x$ to be the point on the segment joining
$x_0$ to $x_1$ where $f_0(x) = f_1(x)$ ($=0$ without loss of generality).
The semiconvexity of $u$ shown in Lemma~\ref{L:Lipschitz} then yields the bound
$u(x) \lesssim |\Delta x||\Delta y| + |\Delta x|^2$.
Assumption \Athree$_s$ allows Theorem~\ref{T:DASM} to be quantified,  so that
$f_t(\cdot) := -c(\cdot,y_t) + c(x,y_t) + u(x)\le u(\cdot)$ actually satisfies
$$f_t(x') - u(x') \lesssim -t(1-t)|x' -x|^2|\Delta y|^2
$$
for $x'$ near $x$.  For $t \in [1/3,2/3]$,  these estimates give some leeway to shift
$y_t$ up to distance $\delta$ without spoiling the inequality
$g_y(x') := -c(x',y) + c(x,y) + u(x) \le u(x')$ on the boundary
$x' \in \partial B_\epsilon(x)$. Since $g_y(x) = u(x)$, this inequality does not extend
to the interior of the ball $B_\epsilon(x)$,  unless we subtract some non-negative
constant from $g_y(\cdot)$.
Subtracting the smallest such constant $\lambda$ yields a function
$g_y(\cdot) - \lambda \le u(\cdot)$ on $B_\epsilon(x)$,
with equality at some $x_* \in B_\epsilon(x)$.  Corollary \ref{C:local-global}
implies $(x_*,y) \in \partial^c u$.  For almost every such $y \in S_\delta$
this provides the desired preimage $x_* \in G^{-1}(y)$.
\end{proof}

\section{Multidimensional screening: an application to economic theory}

We now sketch an application \cite{FigalliKimMcCann-econ} of the mathematics we
have developed to one of the central problems in microeconomic theory:
making pricing or policy decisions for a
monopolist transacting business with a field of anonymous
agents whose preferences are known only statistically.
Economic buzzwords associated with problems
of this type include ``asymmetric information,'' ``mechanism design,'' ``incentive compatibility,''
``nonlinear pricing,'' ``signalling,'' ``screening,''
and the ``principal / agent'' framework.

\subsection{Monopolist nonlinear pricing and the principal-agent framework}
To describe the problem,  imagine we are given: a set of ``customer'' types $M^+ \subset \R^n$
and ``product'' types $M^- \subset \R^n$ and \\

$b(x,y)$= benefit of product $y \in M^-$ to customer $x \in M^+$;\\

$a(y)$= monopolist's cost to manufacture $y \in M^-$;\\

$d\mu^+(x)\geq 0$ relative frequency of different customer types on $M^+$.\\

\noindent

Knowing all this data,  the \textbf{Monopolist's problem} is to assign a price
to each product,  for which she will be willing to
manufacture that product and sell it to whichever agents choose to buy it.
Her task is to design the price menu $v:M^- \to\R\cup \{+\infty\}$ so as to maximize
profits. The only constraint that prevents her from raising prices arbitrarily high
is the existence of a fixed $y_\emptyset \in M^-$, called the ``outside option'' or ``null product'',
which she is compelled to sell at cost $v(y_\emptyset)=a(y_\emptyset)$. Though it is not necessary,
we can fix the cost of the null product to vanish without loss of generality.

The \textbf{Agent's problem} consists in computing
\begin{equation}\label{indirect utility}
u(x)=\max\limits_{y \in M^-} b(x,y)-v(y)
\end{equation}
and
choosing to buy that product $y_{b,v}(x)$ 
for which the maximum is attained. The monopolist is generally called the {\em principal}, while the
customers are called {\em agents}.
\\

Economists use this framework to model many different types of transactions,
including tax policy  \cite{Mirrlees71} (where the government wants to decide a tax structure
which encourages people both to work and report income),
contract theory  \cite{Spence74} (where a company wants to decide a salary structure which
attracts and rewards effective employees without overpaying them),
and the monopolist nonlinear pricing problem described above \cite{MussaRosen78}.
In the initial studies, the type spaces $M^\pm$ were assumed one-dimensional, 
with $x \in M^+$ representing the innate ability or talent of the
prospective tax-payer or employee,  and $y \in M^-$ the amount of work that he
chooses to do or the credentials he chooses to acquire.  The basic insight of
Mirrlees and Spence was that under condition \Atwo\ (which implies \Aone\ in
a single dimension)  the variables $x,y \in \R$ would be monotonically correlated by
the optimal solution,  reducing the monopolist's problem to an ordinary differential
equation.  For this reason the one-dimensional versions of \Aone--\Atwo\ are called
{\em Spence-Mirrlees} (or {\em single-crossing}) conditions in the economics literature;
both Mirrlees and Spence were awarded Nobel prizes for exploring the economic implications
of their solution.

Of course,  many types of products are more realistically modeled
using several parameters $y \in \R^n$ --- in the case of cars these might include
fuel efficiency, size, comfort, safety, reliability, and appearance ---
while the preferences of customers for such parameters are similarly nuanced.
Thus it is natural and desirable to want to solve the multidimensional version
$n \ge 2$ of the problem,  about which much less is known \cite{Basov05}.
Monteiro and Page \cite{MonteiroPage98} and independently Carlier \cite{Carlier01}
showed only that enough compactness remains to conclude that the monopolist's optimal strategy
exists. An earlier connection to optimal transportation can be discerned in the
work of Rochet \cite{Rochet87},  who proved a version of Theorem \ref{T:Rockafellar} (Rockafellar)
for general utility functions $b$ ($=-c$ in our earlier notation).




Rochet and Chon\'e \cite{RochetChone98} studied the special case $b(x,y)=x\cdot y$
on $M^\pm = [0,\infty[^n$.
Taking $a(y)=\frac{1}{2}|y|^2$, $d\mu^+=\chi_{[0,1]^2} d^2 x$, and $y_\emptyset=(0,0)$, they
deduced that the mapping
$
y_{b,v}:M^+\to M^-
$
was the gradient of a convex function;  it sends
a positive fraction of the square to the point mass $y_\emptyset$,
and a positive fraction to the line segment $y_1=y_2$,
while the remaining positive fraction gets mapped in a bijective manner, so that
\begin{equation}\label{bunching}
\mu^- :=(y_{b,n})_{\#}\mu^+=f^-_0\delta_{y_\emptyset}+f^-_1d\HM^1+f^-_2d\HM^2.
\end{equation}
They interpreted this solution to mean that while the top end of the market
gets customized vehicles $f^-_2$,  price discrimination alone
forces those customers in the next market segment to choose from a more limited set $f^-_1$
of economy vehicles offering a compromise between attributes $y_1$ and $y_2$.  A fraction
$f^-_0>0$ of consumers will be priced out of the market altogether ---
which had already been observed by Armstrong \cite{Armstrong96} to be a hallmark
of nonlinear pricing in more than one dimension $n \ge 2$ .  Economists refer to this
general phenomenon \eqref{bunching} as ``bunching'',
and to the fact that $f^-_0>0$ as ``the desirability of exclusion.''

How robust is this picture?  It remains a pressing question to understand whether the
bunching phenomena of Rochet and Chon\'e is robust, or merely an accident of the particular
example they explored.  As we now explain,  their results were obtained by reducing the
monopolist's problem to the minimization of a Dirichlet energy:
\begin{equation}\label{Dirchlet enery}
\min_{ 0\leq u {\rm\ convex}}
\int_{[0,1]^2}\left ( \frac{1}{2}|Du|^2- \langle x,Du(x) \rangle +u(x)\right) d\HM^2(x).
\end{equation}
The constraint that $u:M^+ \longrightarrow \R$ be {\em convex} makes this problem non-standard:
its solution satisfies a Poisson type equation only on the set where $u$ is strongly convex
($D^2 u > 0$),
and there are free boundaries separating the regions where the different constraints $u \ge 0$
and $D^2 u \ge 0$ begin to bind.

\subsection{Variational formulation using optimal transportation}

The principal's problem is to
choose $v:M^- \longrightarrow \R \cup \{+\infty\}$
to
maximize her profits,  or equivalently to minimize
her net losses:
\begin{equation}\label{principal's naive problem}
\min_{\{ v \mid v(y_\emptyset)=a(y_\emptyset)\}} \int_{M^+} [a(y_{b,v}(x))-v(y_{b,v}(x)) ]d\mu^+(x).
\end{equation}
Note that the integrand vanishes for all customers $x$ who choose the null product $y_{b,v}(x)=y_\emptyset$.

Wherever the agent's maximum 
\eqref{indirect utility} is achieved, we have $Du(x)-D_xb(x,y_{b,v}(x))=0$,
so using the twist condition \Aone\ from our previous lectures we can invert this relation
to get $y_{b,v}(x)=Y(x,Du(x))$.  Moreover,  the function $u(x)$ from \eqref{indirect utility}
is a $b$-convex function, called the {\em surplus} or {\em indirect utility} $u = u^{\tb b}$.
(In our previous notation,  $u$ is a $(-b)$-convex
function and $u=u^{(\tilde{-b})(-b)}$,  but we suppress the minus signs hereafter.)

Since $v(Y(x,Du(x)) = b(x,Y(x,Du(x)) - u(x)$, 
we may reformulate
the variational problem \eqref{principal's naive problem} as the minimization of the principal's losses
\begin{equation}\label{principal's losses}
L(u) := \int_{M^+}\left [a(Y(x,Du(x)))-b(x,Y(x,Du(x))) + u(x)\right ]d\mu^+(x).
\end{equation}
over the set $\mathcal{U}_\emptyset =\{u \in \mathcal{U} \mid u \ge u_{\emptyset} \}$
of $b$-convex functions $\mathcal{U}=\{u \mid u=u^{\bar b b} \}$ which
exceed the reservation utility $u_\emptyset (\cdot )= b(.,y_\emptyset)-a(y_\emptyset)$
associated with the outside option or null product.
This strange reformulation due to Carlier \cite{Carlier01}
\begin{equation}\label{Carlier's problem}
\min_{u \in \mathcal{U}_\emptyset} L(u)
\end{equation}
reduces to \eqref{Dirchlet enery} in the case considered by Rochet and Chon\'e.

\subsection{When is this optimization problem convex?}
From \cite{MonteiroPage98} and \cite{Carlier01} we know that a minimizer exists.
The contribution of Figalli, Kim and McCann is to give sufficient conditions for
the variational problem to become convex --- in which case it is considerably
simpler to analyze, theoretically and computationally.
It is very interesting that the Ma, Trudinger and Wang criteria for the regularity of
optimal mappings turn out to be related to this question.
The following are among the main results of \cite{FigalliKimMcCann-econ}:

\begin{thm}[Convexity of the principal's strategy space \cite{FigalliKimMcCann-econ}]\label{T:convexity of principal's strategy space}
If $b=(-c)$ satisfies \Azero--\Atwo\ and \Afour\ then the set
$\mathcal{U} = \{u=u^{\tb b}\}$ is convex if and only if
\Bthree\ holds, ie., if and only if $\cross(p,q)\geq 0$ for all tangent vectors
$(p,x_0) \in TM^+$ and $(q,y_0) \in TM^-$.
\end{thm}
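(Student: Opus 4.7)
The plan is to reduce the convexity of $\mathcal{U}$ to a one-parameter convexity in $t$ along $c$-segments, and then to identify that reduced condition with the $b$-convex form of Loeper's maximum principle recorded as Remark~\ref{R:convex DASM}. Since every element of $\mathcal{U}$ is a pointwise supremum of $b$-affine functions $x \mapsto b(x,y) - \lambda$, and an arbitrary supremum of $b$-convex functions is $b$-convex, the identity $(1-t)\sup_\alpha A_\alpha + t\sup_\beta B_\beta = \sup_{\alpha,\beta}[(1-t)A_\alpha + tB_\beta]$ reduces the convexity of $\mathcal{U}$ to the single question: for every $y_0, y_1 \in M^-$ and $\lambda_0, \lambda_1 \in \mathbf{R}$, is
$$u_t(x) := (1-t)[b(x,y_0) - \lambda_0] + t[b(x,y_1) - \lambda_1]$$
itself $b$-convex for each $t \in [0,1]$?

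Next I would test this pointwise: $b$-convexity of the smooth function $u_t$ is equivalent to the existence, for each $x^* \in M^+$, of some $y^* \in M^-$ with $b(\cdot, y^*) - b(x^*, y^*) \le u_t(\cdot) - u_t(x^*)$ throughout $M^+$. The first-order condition at $x^*$ forces $D_x b(x^*, y^*) = (1-t)D_x b(x^*, y_0) + t D_x b(x^*, y_1)$; the twist condition \Aone\ determines $y^*$ uniquely as the endpoint $y^*_t$ of the $c$-segment from $y_0$ to $y_1$ anchored at $x^*$, while \Afour\ guarantees $y^*_t \in M^-$. Substituting and using $b=-c$, the required global inequality becomes
$$f(x,t) := -c(x, y^*_t) + c(x^*, y^*_t) \le (1-t)\,f(x,0) + t\,f(x,1),$$
i.e., the convexity in $t$ of $f(x,\cdot)$ along the $c$-segment $(y^*_t)_{t\in[0,1]}$ based at $x^*$. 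Under \Bthree\ this is precisely the content of Remark~\ref{R:convex DASM}, which yields the ``if'' direction.

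For the converse, assuming $\mathcal{U}$ is convex forces the above convexity of $f(x,\cdot)$ for every $x, x^* \in M^+$ and every $y_0, y_1 \in M^-$. Choosing $x = x(s)$ along a smooth curve with $x(0) = x^*$ and $\dot x(0)$ arbitrary, and collapsing $y_0, y_1 \to y_0$ along a $c$-segment with prescribed initial velocity $\dot y(0)$, expansion of $f(x(s),t)$ to second order in $s$ and $t$ converts $\partial_t^2 f(x(s),0) \ge 0$ into the inequality $-\partial_s^2\partial_t^2 c(x(s), y^*_t)|_{s=t=0} \ge 0$; Lemma~\ref{L:cross-curvature} then identifies this quantity with $\cross(\dot x(0), \dot y(0))$. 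The hard part will be this converse step: I expect the main obstacle to be ensuring that as $(x^*, y_0, y_1)$ range over their domains, the resulting tangent data $(p,q)$ actually exhaust $T_{x^*}M^+ \times T_{y_0} M^-$, so that the pointwise second-derivative information promotes to the full condition \Bthree\ on $M^+\times M^-$; here the non-degeneracy \Atwo\ (so that varying $y_1$ fills a neighborhood of $0$ in $T_{y_0}M^-$ via $D_x c(x^*,\cdot)$) together with the convexity \Afour\ of $M^-_{x^*}$ (to keep $y^*_t$ inside $M^-$) should supply the openness needed for the density argument.
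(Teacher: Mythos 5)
Your proposal is correct and follows essentially the same route as the paper's: at each base point $x^*$ you pass to the $c$-segment $(y^*_t)$ forced by the first-order condition, and invoke Remark~\ref{R:convex DASM} (convexity of $t \mapsto f(x,t)$ under \Bthree) to verify the $b$-support inequality for $u_t$. The only cosmetic difference is that you first reduce to $b$-affine generators $b(\cdot,y_i)-\lambda_i$ via the suprema identity, whereas the paper works directly with arbitrary $u_0,u_1 \in \mathcal{U}$ by choosing supporting points $y_0,y_1$ at each $x_0$ and combining their support inequalities; the paper likewise only sketches the converse and defers the full argument to \cite{FigalliKimMcCann-econ}.
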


\begin{rem}
It was pointed out subsequently by Brendan Pass \cite{Pass-personal} that the
convexity of $M^-_{x_0}$ assumed in \Afour\ for each $x_0 \in M^+$
is also necessary for convexity of $\mathcal{U}$.
\end{rem}

\proof[Sketch of proof]
First assume \Bthree\ holds --- assuming always \Azero--\Atwo\ and \Afour.
Given $u_0,u_1 \in \mathcal{U}$ and $t \in [0,1]$ we claim $u_t := (1-t) u_0 + tu_1$ is $b$-convex.
This can be established by finding for each $x_0 \in M^+$ a $y_t \in M^-$ such that
\begin{equation}\label{b-support}
u_t(\cdot) \ge b(\cdot, y_t) - b(x_0, y_t) + u_t(x_0) \qquad {\rm throughout}\ M^+,
\end{equation}
for then $u_t(\cdot)$ is the supremum 
of such functions.
Corresponding to $t=0,1$ the desired points $y_0,y_1 \in M^-$ exist, by $b$-convexity of
$u_{i} = u_{i}^{\tb b}$ for $i=0,1$.
By \Afour, we can solve the equation $D_x b(x_0,y_t) = (1-t) D_x b(x_0,y_0) + t D_x b(x_0, y_t)$;
the solution $y_t \in M^-$ makes $f(\cdot, t) := b(\cdot, y_t) - b(x_0,y_t)$ a
convex function of $t\in [0,1]$,
according to Remark~\ref{R:convex DASM}.  Inequality
\eqref{b-support} holds at the endpoints $t=0,1$;  taking a convex combination yields
the desired inequality for intermediate values of $t \in [0,1]$.
For the converse direction, we refer to \cite{FigalliKimMcCann-econ}.
\endproof

\begin{thm}[Convexity of principal's losses and uniqueness of optimal strategy
 \cite{FigalliKimMcCann-econ}]\label{T:uniqueness of principal's losses}
If \Azero--\Afour\ and \Bthree\ hold and if $a=a^{b \tb}$, then the functional
$u \in \mathcal{U} \longmapsto L(u)$ defined by \eqref{principal's losses} is convex.
Furthermore, it has enough strict convexity to conclude the optimizer
$u \in \mathcal{U}_\emptyset$ is uniquely
determined (at least $\mu^+$-a.e.) if $\mu^+ \ll \HM^n$ and either
(i) $y_{b,a}:  M^+ \to M^-$ is continuous or else
(ii) $b$ has positive cross-curvature \Bthree$_s$.
\end{thm}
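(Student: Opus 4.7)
The plan is to prove convexity of $L$ by a pointwise argument on the integrand in cotangent space, then extract uniqueness from (near-)strict convexity.

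First I would decompose
$$L(u) = \int_{M^+} u\, d\mu^+ + \int_{M^+} F(x, Du(x))\, d\mu^+(x),\qquad F(x,p) := a(Y(x,p)) - b(x, Y(x,p)).$$
The map $u \mapsto \int u\, d\mu^+$ is linear, and $u \mapsto Du$ is linear on $\mathcal{U}$ in the $\mu^+$-a.e.\ sense (every $b$-convex function is differentiable $d^n x$-a.e.\ by Lemma~\ref{L:Lipschitz} and Rademacher, and $\mu^+ \ll \HM^n$). So convexity of $L$ on $\mathcal{U}$ reduces to showing that for each fixed $x \in M^+$ the function $p \mapsto F(x,p)$ is convex on $D_x b(x, M^-) \subset T_x^* M^+$, a set convex by \Afour.

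The key step is to use the hypothesis $a = a^{b\tb}$ to write
$$a(y) = \sup_{x' \in M^+}\bigl[b(x',y) - a^b(x')\bigr],$$
and hence
$$F(x,p) = \sup_{x' \in M^+} \Bigl[\bigl(b(x', Y(x,p)) - b(x, Y(x,p))\bigr) - a^b(x')\Bigr].$$
For a fixed pair $x, x'$ and a cotangent segment $p_t = (1-t)p_0 + tp_1$, the curve $y_t := Y(x, p_t)$ is, by construction, exactly the $b$-segment with reference $x$ connecting $y_0$ and $y_1$ (it satisfies $D_x b(x,y_t) = p_t$, cf.\ \eqref{c-segment} with $c=-b$). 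Remark~\ref{R:convex DASM}, whose hypothesis is precisely \Bthree, asserts that $t \mapsto b(x', y_t) - b(x, y_t)$ is convex on $[0,1]$. Thus each summand in the supremum defining $F(x,\cdot)$ is convex in $p$, and the supremum of a family of convex functions is convex. This yields convexity of $L$ on $\mathcal{U}$ and hence on $\mathcal{U}_\emptyset$.

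For uniqueness, suppose $u_0, u_1 \in \mathcal{U}_\emptyset$ both minimize $L$; by Theorem~\ref{T:convexity of principal's strategy space} the interpolant $u_t = (1-t)u_0 + tu_1$ lies in $\mathcal{U}_\emptyset$, so the just-proven convexity forces
$$F(x, (1-t) Du_0(x) + t Du_1(x)) = (1-t) F(x, Du_0(x)) + t F(x, Du_1(x))$$
for $\mu^+$-a.e.\ $x$. Under hypothesis (ii), \Bthree$_s$ upgrades the Remark~\ref{R:convex DASM} convexity to strict convexity in $t$ whenever $x' \ne x$, so affineness of the sup on $[Du_0(x), Du_1(x)]$ allows only two possibilities at a.e.\ $x$: either $Du_0(x) = Du_1(x)$, or the active $x'$ in the supremum is $x$ itself throughout $t \in [0,1]$. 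In the latter case the value is the constant $-a^b(x)$, forcing $Y(x, Du_t(x)) = Y_a(x)$ for all $t$ and hence $Du_t(x) = D_x b(x, Y_a(x))$ constant in $t$, so once again $Du_0(x) = Du_1(x)$. Under hypothesis (i) continuity of $y_{b,a} = Y_a$ plays the analogous role on the degenerate ``first-best'' region. From $Du_0 = Du_1$ $\mu^+$-a.e.\ plus the binding reservation constraint $u_i \ge u_\emptyset$ (otherwise subtracting a constant would decrease $L$), one concludes $u_0 = u_1$ $\mu^+$-a.e.

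I expect the main obstacle to be the uniqueness argument under hypothesis (i) alone, where the integrand $F(x,\cdot)$ is only weakly convex. On the region where the supremum in the $a = a^{b\tb}$ representation is attained at $x' = x$, convexity gives no pointwise rigidity, and one must leverage continuity of the first-best selection $y_{b,a}$ (rather than strict cross-curvature) to rule out nontrivial interpolation — together with a careful use of the reservation-price constraint to eliminate the constant-shift ambiguity on each connected component of $\spt \mu^+$.
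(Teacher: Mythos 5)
Your convexity argument follows exactly the same route as the paper's sketch: split $L(u)$ into the linear term $\int u\, d\mu^+$ plus $\int F(x,Du(x))\,d\mu^+$, use $a=a^{b\tb}$ to write $F(x,\cdot)$ as a supremum over $x'$ of $p \mapsto b(x',Y(x,p))-b(x,Y(x,p))-a^b(x')$, and invoke Remark~\ref{R:convex DASM} (which requires \Bthree) to get convexity of each summand in $p$, hence of the sup. The paper phrases this identically. One useful thing to state explicitly, which you only implicitly use, is that the suprema are over convex functions so that affineness of the sup on a segment forces affineness of any active summand there; this is the lemma your uniqueness argument needs.

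For uniqueness, the paper defers essentially everything to \cite{FigalliKimMcCann-econ}, so your elaboration is the interesting part. In case (ii) there is a gap in the step ``the value is the constant $-a^b(x)$, forcing $Y(x,Du_t(x))=Y_a(x)$ for all $t$.'' The equality $F(x,Du_t(x))=-a^b(x)$ only says $Y(x,Du_t(x))$ lies in the set $\arg\max_y\bigl[b(x,y)-a(y)\bigr]$; it does \emph{not} pin down a single point unless that argmax is a singleton, i.e.\ unless $a^b$ is differentiable at $x$ (so the twist condition identifies the unique maximizer with $Y_a(x)=Y(x,Da^b(x))$). Since $a^b$ is $b$-convex, hence locally semiconvex by Lemma~\ref{L:Lipschitz}, this holds on $\dom Da^b$, a set of full Lebesgue and hence full $\mu^+$ measure --- and that is precisely the restriction the paper makes (``strictly convex for all $x\in\dom Da^b$''). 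So your argument closes once you add this restriction. You have correctly flagged that case (i) is the delicate one: continuity of $y_{b,a}$ (which entails single-valuedness, so $\dom Da^b=M^+$) does give uniqueness of the first-best at every $x$, but producing the needed rigidity on the ``flat bottom'' of $F(x,\cdot)$ without strict cross-curvature still requires an argument you have not supplied; the paper itself defers this to the cited reference, so leaving it as a named obstacle is a fair reflection of what the sketch actually proves.
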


\proof[Sketch of proof]
To deduce convexity of $L: \mathcal{U} \longrightarrow \R$,  recall $\tb$-convexity of $a$
implies
$$
a(Y(x,p)) - b(x,Y(x,p)) = \sup_{x_1 \in M^+} b(x_1,Y(x,p)) - b(x,Y(x,p)) - a^b(x_1).
$$
For each $x \in M^+$ fixed,  the functions under the supremum are convex with respect
to $p \in M^-_x$, according to Remark \ref{R:convex DASM}.  Thus the integrand in
\eqref{principal's losses} is linear in $u(x)$ and convex with respect to $p = Du(x)$,
which establishes the desired convexity of the integral $L(u)$.  In case (i) the integrand
is strictly convex,  while in case (ii) it is strictly convex for all $x \in \dom Da^b$,
which is a set of full $\mu^+ \ll \Hn$ measure.
We refer to \cite{FigalliKimMcCann-econ} for details.
\endproof

Regarding robustness: we may mention that, as in Remark \ref{R:bilinear borderline},
the bilinear function $b(x,y)$ 
lies on the borderline of costs
which satisfy \Bthree.  Thus there will be perturbations of this function which destroy
convexity of the problem,  and we can anticipate that under such perturbations, uniqueness
and other properties of its solution may no longer persist.  In fact,  for $a=0$ and
$b(x,y) = -d^2_M(x,y)$ on a Riemannian ball $M^+=M^-=B_r(y_\emptyset)$,
we arrive at a problem equivalent to a fourfold symmetrized version of
 Rochet and Chon\'e's in the Euclidean case,
but which satisfies or violates \Bthree\ depending on whether the metric is spherical or hyperbolic.
This can used to model local delivery of a centralized resource for
a town in the mountains \cite{FigalliKimMcCann-econ}; cf.\ Examples \ref{E:sphere}--\ref{E:saddle}.
On the other hand,
under the hypotheses of Theorem~\ref{T:uniqueness of principal's losses}
we are able to show that Armstrong's ``desirability of exclusion'' \cite{Armstrong96}
continues to hold. We give the statement only and refer to \cite{FigalliKimMcCann-econ} for
its proof.

\begin{thm}[The desirability of exclusion]\label{T:desirability of exclusion}
Assume \Azero--\Afour, \Bthree, $a=a^{b \tb}$ and that $d\mu^+=f^+d\Hn$ with
$f^+ \in W^{1,1}(M^+)$ and the convex set $M^-_{y_\emptyset}= D_y b(M^+,y_\emptyset) \subset \R^n$
has no $(n-1)$ dimensional facets.  Then a positive fraction of agents will be priced out of the
market by the principal's optimal strategy.
\end{thm}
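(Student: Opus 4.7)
The plan is to argue by contradiction: supposing an optimal $u \in \mathcal{U}_\emptyset$ satisfies $\mu^+(E) = 0$, where $E := \{x \in M^+ : u(x) = u_\emptyset(x)\}$ is the exclusion set, I will exhibit a competitor $u_\epsilon \in \mathcal{U}_\emptyset$ with $L(u_\epsilon) < L(u)$. This contradicts the optimality ensured by Theorem \ref{T:uniqueness of principal's losses}, forcing $\mu^+(E) > 0$.

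\textbf{Perturbation and first-order accounting.}
For small $\epsilon > 0$ consider the uniformly raised-price indirect utility
$$
u_\epsilon(x) := \max\bigl\{u(x) - \epsilon,\, u_\emptyset(x)\bigr\},
$$
obtained by raising the price $v(y)$ by $\epsilon$ for every product $y \ne y_\emptyset$. Since the pointwise maximum of two $b$-convex functions is itself $b$-convex (being a supremum of $b$-affine ingredients) and manifestly $\ge u_\emptyset$, one has $u_\epsilon \in \mathcal{U}_\emptyset$. Let
$$
\ell(x) := a\bigl(Y(x,Du(x))\bigr) - b\bigl(x, Y(x,Du(x))\bigr) + u(x)
$$
denote the integrand of $L(u)$; note $\ell(x) = -\bigl(v(Y) - a(Y)\bigr)$ equals minus the profit collected from the agent of type $x$, and vanishes whenever that agent buys $y_\emptyset$. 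Decomposing $M^+ = E \sqcup B_\epsilon \sqcup A_\epsilon$ with $A_\epsilon := \{u > u_\emptyset + \epsilon\}$ and $B_\epsilon := \{u_\emptyset < u \le u_\emptyset + \epsilon\}$, the perturbation leaves $\ell$ unchanged on $E$, lowers it by exactly $\epsilon$ on $A_\epsilon$, and resets it to $0$ on $B_\epsilon$. Hence
$$
L(u_\epsilon) - L(u) \;=\; -\epsilon\, \mu^+(A_\epsilon) \;-\; \int_{B_\epsilon} \ell(x)\, d\mu^+(x).
$$
Under the standing assumption $\mu^+(E) = 0$ one has $\mu^+(A_\epsilon) \to 1$, so to produce a contradiction it suffices to establish
$$
\int_{B_\epsilon} \ell(x)\, d\mu^+(x) \;=\; o(\epsilon) \qquad \text{as}\ \epsilon \downarrow 0. \qquad (\star)
$$

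\textbf{Smallness of $\ell$ and measure of the transition layer.}
On $E \cap \dom Du$ one has $Du = Du_\emptyset$ (since the inequality $u \ge u_\emptyset$ becomes equality on $E$), and the twist condition \Aone\ then forces $Y(x, Du(x)) = y_\emptyset$, whence $\ell(x) = 0$. Combined with the semiconvexity of $u$ from Lemma \ref{L:Lipschitz} and continuity of $Y$ guaranteed by \Atwo, this yields $\ell(x) \to 0$ as $x$ approaches $E$ through differentiability points, providing a modulus of continuity $\omega(\epsilon) \to 0$ for $\ell$ restricted to $B_\epsilon$. The delicate step is to bound $\mu^+(B_\epsilon)$: because $D(u - u_\emptyset)$ degenerates to zero on $E$, a naive coarea estimate $\mu^+(B_\epsilon) = O(\epsilon)$ is unavailable. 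It is here that the hypothesis that $M^-_{y_\emptyset} = D_y b(M^+, y_\emptyset)$ has no $(n-1)$-dimensional facet enters: via \Atwo\ and the relation $Du_\emptyset(x) = D_x b(x, y_\emptyset)$, it rules out the pathological possibility of $b$-affine supports of $u$ being tangent to $u_\emptyset$ along an $(n-1)$-dimensional subset of $M^+$, which would otherwise permit $B_\epsilon$ to be thick. Combined with the $W^{1,1}$ regularity of $f^+$ (used to absorb the vanishing modulus against the density through an integration by parts), this is expected to give $\int_{B_\epsilon}\ell\,d\mu^+ \lesssim \omega(\epsilon)\,\mu^+(B_\epsilon) = o(\epsilon)$, establishing $(\star)$.

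\textbf{Main obstacle and conclusion.}
The hardest part, and where most of the work should lie, is converting the dual geometric no-facet hypothesis on $M^-_{y_\emptyset}$ into the quantitative primal bound on $\mu^+(B_\epsilon)$ just outlined: one must trace the condition through the $b$-Legendre correspondence $x \mapsto D_x b(x, y_\emptyset)$ and combine it with the $W^{1,1}$ regularity of $f^+$ so that the product $\omega(\epsilon)\,\mu^+(B_\epsilon)$ is genuinely $o(\epsilon)$ rather than merely $O(\epsilon)$. Once $(\star)$ is secured,
$$
L(u_\epsilon) - L(u) \;\le\; -\epsilon\bigl(1 - o(1)\bigr) + o(\epsilon) \;<\; 0
$$
for all sufficiently small $\epsilon > 0$, contradicting optimality of $u$ and therefore forcing $\mu^+(E) > 0$.
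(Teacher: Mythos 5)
The paper itself gives no proof of this theorem: the text immediately following the statement reads ``We give the statement only and refer to \cite{FigalliKimMcCann-econ} for its proof.'' So your proposal can only be judged against the known strategy from Armstrong \cite{Armstrong96} and Figalli--Kim--McCann, not against a proof in these lecture notes.

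Your overall scaffolding is the right one and matches that strategy: the uniform $\epsilon$-price-increase competitor $u_\epsilon = \max\{u-\epsilon, u_\emptyset\}$, the decomposition $M^+ = E \sqcup B_\epsilon \sqcup A_\epsilon$, the exact first-order gain $-\epsilon\,\mu^+(A_\epsilon)$, and the reduction of the whole theorem to the estimate $(\star)$, namely $\int_{B_\epsilon}|\ell|\,d\mu^+ = o(\epsilon)$. This part is correct and well organized.

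However, there is a genuine gap at $(\star)$, and one of the two ingredients you invoke to close it appears to be false as stated. You claim that semiconvexity of $u$ (Lemma~\ref{L:Lipschitz}) plus continuity of $Y$ (from \Atwo) give a modulus of continuity $\omega(\epsilon)\to 0$ for $\ell$ on $B_\epsilon$. But semiconvexity is only a one-sided Hessian bound: it does not prevent $Du$ from jumping as $x$ crosses the set $E = \{u = u_\emptyset\}$, and hence does not force $Y(x,Du(x))\to y_\emptyset$ (equivalently $\ell(x)\to 0$) as $x$ approaches $E$ from $B_\epsilon$. The elementary picture $u - u_\emptyset = |x_1|$ already shows a $b$-convex function equal to $u_\emptyset$ on a measure-zero set along which the gradient jumps by a fixed amount, so $\ell$ is bounded away from zero on all of $B_\epsilon$. (Such $u$ are not \emph{optimal}, of course --- but your argument cannot appeal to optimality at this point, since optimality of $u$ is exactly what you are trying to contradict; using it to establish the estimate you need would be circular.) As a result $(\star)$ does not follow from $\omega(\epsilon)\,\mu^+(B_\epsilon)$ with the inputs you list, and you explicitly concede as much when you write that the transfer from the no-facet hypothesis on $M^-_{y_\emptyset}$ and the $W^{1,1}$ regularity of $f^+$ to a bound on $\mu^+(B_\epsilon)$ is ``expected to give'' the estimate. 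That transfer is the theorem; it cannot be left as an expectation. Concretely, what is missing is the change of variables to $q = D_y b(x, y_\emptyset) \in M^-_{y_\emptyset}$ (under which the sublevel sets of $u - u_\emptyset$ become genuinely convex sets in the convex body $M^-_{y_\emptyset}$), a quantitative use of the no-$(n-1)$-facet condition to control the geometry of those shrinking convex sublevel sets, and a precise use of $f^+ \in W^{1,1}$ to make the resulting coarea/boundary-term estimate rigorous. Without these the bound $\int_{B_\epsilon}|\ell|\,d\mu^+ = o(\epsilon)$ remains unproved, and the contradiction does not go through.
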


\begin{rem}
It is interesting to note that the strict convexity condition on $M^+_{y_\emptyset}$ holds neither
in one dimension --- where Armstrong noted counterexamples to the desirability of exclusion ---
nor for the example of Rochet-Chon\'e,  where convexity of $M^+_{y_\emptyset}$ is not strict.
\end{rem}

\subsection{Variant: maximizing social welfare}

Idealistic readers may be taken somewhat aback by the model just presented,  for it is the very
theory which predicts, among other things, just how uncomfortable airlines ought to
make their economy seating to ensure
--- without sacrificing too much economy-class revenue ---
that passengers with the means to secure a
business-class ticket have sufficient incentive to do so.
Such readers will doubtless be glad to know that the same mathematics is equally
relevant to the more egalitarian question of how to price public services so as to
maximize societal benefit.

For example, suppose the welfare $w(x,u(x))$ of agent $x$ is a concave function
of the indirect utility $u(x)$ he receives.  A public service provider would like to
set a price menu for which $u = v^b$ maximizes the toral welfare among all agents:
$$
\max \limits_{u \in \mathcal{U}_\emptyset,\; L(u)\leq 0} \int_{M^+} w(x,u(x))d\mu^+(x),
$$
subject to the constraint  $L(u) \le 0$ that the service provider not sustain losses.
Introducing a Lagrange multiplier $\lambda$ for this budget constraint, the problem
can be rewritten in the unconstrained form
$$
\max \limits_{u \in \mathcal{U}_\emptyset} -\lambda L(u)+\int_{M^+} w(x,u(x))d\mu^+(x),
$$
(for a suitable $\lambda \geq 0$).
Under the same assumptions \Azero--\Afour, \Bthree\ and $a=a^{b\tb}$ as before,
we see from the results above that this becomes a concave maximization problem for which existence and
uniqueness of solution follow directly,  and which is therefore quite amenable to further study,
both theoretical and computational.

\section{A pseudo-Riemannian and symplectic geometric afterword}
\label{S:differential geometry}

The conditions of Ma, Trudinger and Wang for regularity of optimal transport have
a differential geometric significance uncovered by Kim and McCann \cite{KimMcCann07p},
which led to their discovery with Warren \cite{KimMcCannWarren09p} of a surprising
connection of optimal transport to the theory of volume-maximizing
special Lagrangian submanifolds in split geometries.

Indeed, since the smoothness of optimal maps $G:M^+ \longrightarrow M^-$ is a question
whose answer is independent of coordinates chosen on $M^+$ and $M^-$,  it follows that
the necessary and sufficient condition \Athree\ for continuity in Theorem \ref{T:Loeper}
should have a geometrically invariant description.  We give this description, below, as
the positivity of certain sectional curvatures of a metric tensor $h$ induced on the
product manifold $N := M^+ \times M^-$ by the cost function $c \in C^4(N)$.  This motivates
the appellation {\em cross-curvature}. The rationale for such a description to exist is quite
analogous to that underlying  general relativity, Einstein's theory of gravity, which
can be expressed in the language of pseudo-Riemannian geometry due to the
coordinate invariance that results from the equivalence principle (observer independence).

Use the cost function to define the symmetric and antisymmetric tensors
\begin{equation}\label{pseudo-metric}
h = \sum_{i=1}^n \sum_{j=1}^n
\frac{\partial^2 c}{\partial x^i \partial y^j} (dx^i \otimes dy^j + d y^j \otimes d x^i)
\end{equation}
\begin{equation}\label{symplectic form}
\omega = \sum_{i=1}^n \sum_{j=1}^n
\frac{\partial^2 c}{\partial x^i \partial y^j} (dx^i \otimes dy^j - d y^j \otimes d x^i)
\end{equation}
on $N=M^+ \times M^-$.  Then condition \Atwo\ is equivalent to non-degeneracy of the
metric tensor $h$,  which in turn is equivalent to the assertion that $\omega$ is a
symplectic form.  Note however that $h$ is not positive-definite,  but has equal numbers
of positive and negative eigenvalues in any chosen coordinates, ie. signature
$(n,n)$.  Conditions \Athree\ and \Afour\ are conveniently re-expressed in terms of
the pseudo-metric $h$,  and its pseudo-Riemannian curvature tensor $R_{ijkl}$
\cite{KimMcCann07p}.  Indeed,
condition \Afour\ asserts the $h$-geodesic convexity of
$\{x_0\} \times M^-$ and $M^+ \times \{y_0\}$,  while the formula
$$
\cross(p,q) = R_{ijkl} p^i q^j p^k q^l
$$
shows the cross-curvature is simply proportional to the pseudo-Riemannian sectional curvature
of the 2-plane $(p \oplus 0)\wedge (0 \oplus q)$.  The restriction distinguishing \Athree\ from
\Bthree\ is that $p \oplus q$ be lightlike,  which is equivalent to the $h$-orthogonality
of $p \oplus 0$ and $0 \oplus q$.

Kim and McCann went on to point out that the graph of any $c$-optimal map is
$h$-spacelike and $\omega$-Lagrangian,  meaning any tangent vectors $P,Q \in T_{(x,G(x))}N$
to this graph satisfy $h(P,P) \ge 0$ and $\omega(P,Q)=0$.  This is a consequence of
Corollary \ref{C:foc}, it is also very illustrative to check it directly by hand in the case of the quadratic cost in $\mathbb{R}^n$.
When \Azero--\Afour\ hold they also showed the converse to be true:
any diffeomorphism whose graph is $h$-spacelike and $\omega$-Lagrangian is also $c$-cyclically
monotone, hence optimal.  With Warren \cite{KimMcCannWarren09p},
they introduced a pseudo-metric
\begin{equation}\label{conformal metric}
h^{f^\pm}_c = \left(\frac{f^+(x) f^-(y)}{|\det c_{i,j}(x,y) |} \right)^{1/n} h
\end{equation}
conformally equivalent to $h$.  In this new metric, they show the graph of the $c$-optimal map
pushing $d\mu^+(x) = f^+(x) d^n x$ forward to $\mu^-(y) = f^-(y)d^n y$
has maximal volume with respect to compactly supported perturbations.
In particular, $Graph(G)$ has zero
mean-curvature as a submanifold (with half the dimension) of $(N,h^{f^\pm}_c)$ ---
yielding an unexpected connection of optimal transportation to more classical problems in
geometry and geometric measure theory.  (Note that the
metric \eqref{conformal metric} depends only on the measures $f^\pm$
and the sign of the mixed partial $D^2_{xy} c$ in dimension $n=1$.)
The preprint  of Harvey and Lawson \cite{HarveyLawson10p}
contains a wealth of related information concerning special Lagrangian
submanifolds in pseudo-Riemannian (= semi-Riemannian) geometry.


\begin{thebibliography}{100}

\bibitem{AbdellaouiHeinich94}
T.~Abdellaoui and H.~Heinich.
\newblock Sur la distance de deux lois dans le cas vectoriel.
\newblock {\em C.R. Acad. Sci. Paris S\'er. I Math.}, 319:397--400, 1994.

\bibitem{AhmadKimMcCann09p}
N.~{Ahmad, H.K. Kim, and R.J. McCann}.
\newblock Optimal transportation, topology and uniqueness.
\newblock {\em Preprint at {\tt www.math.toronto.edu/mccann}}.

\bibitem{AlbertiAmbrosio99}
G.~Alberti and L.~Ambrosio.
\newblock A geometrical approach to monotone functions in {$\R^n$}.
\newblock {\em Math. Z.}, 230:259--316, 1999.

\bibitem{Ambrosio03}
L.~Ambrosio.
\newblock Lecture notes on optimal transport problems.
\newblock In {\em Mathematical Aspects of Evolving Interfaces}, volume 1812 of
  {\em Lecture Notes in Mathematics}, pages 1--52. Springer, Berlin, 2003.

\bibitem{AmbrosioGigliSavare05}
L.A. {Ambrosio, N. Gigli, and G. Savar\'e}.
\newblock {\em Gradient flows in metric spaces and in the space of probability
  measures}.
\newblock Lecture Notes in Mathematics ETH Z{\"u}rich. Birkh{\"a}user Verlag,
  Basel, 2005.

\bibitem{AndersonNash87}
E.J. Anderson and P.~Nash.
\newblock {\em Linear Programming in Infinite-Dimensional Spaces}.
\newblock Wiley, Chichester, 1987.

\bibitem{Armstrong96}
M.~Armstrong.
\newblock Multiproduct nonlinear pricing.
\newblock {\em Econometrica}, 64:51--75, 1996.

\bibitem{Basov05}
S.~Basov.
\newblock {\em Multidimensional Screening}.
\newblock Springer-Verlag, Berlin, 2005.

\bibitem{BernardBuffoni07}
P.~Bernard and B.~Buffoni.
\newblock {Optimal mass transportation and Mather theory}.
\newblock {\em J. Eur. Math. Soc. (JEMS)}, 9:85--121, 2007.

\bibitem{BernotCasellesMorel09}
M.~{Bernot, V.~Caselles, J.-M.~Morel}.
\newblock {\em Optimal Transportation Networks: Models and Theories}.
\newblock Springer-Verlag, Berlin, 2009.

\bibitem{BianchiniCavalletti10p}
S.~Bianchini and F.~Cavalletti.
\newblock {The Monge problem for distance cost in geodesic spaces}.
\newblock {\em Preprint at {\tt
  http://cvgmt.sns.it/papers/biacav09/Monge@problem.pdf}}.

\bibitem{BouchitteButtazzo01}
G.~Bouchitt\'e and G.~Buttazzo.
\newblock {Characterization of optimal shapes and masses through
  Monge-Kantorovich equation}.
\newblock {\em J. Eur. Math. Soc. (JEMS)}, 3:139--168, 2001.

\bibitem{BouchitteGangboSeppecher08}
G.~{Bouchitt\'e, W.~Gangbo and P.~Seppecher}.
\newblock Michell trusses and lines of principal action.
\newblock {\em Math. Models Methods Appl. Sci.}, 18:1571--1603, 2008.

\bibitem{Brenier87}
Y.~Brenier.
\newblock D\'ecomposition polaire et r\'earrangement monotone des champs de
  vecteurs.
\newblock {\em C.R. Acad. Sci. Paris S\'er. I Math.}, 305:805--808, 1987.

\bibitem{Brenier91}
Y.~Brenier.
\newblock Polar factorization and monotone rearrangement of vector-valued
  functions.
\newblock {\em Comm. Pure Appl. Math.}, 44:375--417, 1991.

\bibitem{Caffarelli96}
L.~Caffarelli.
\newblock Allocation maps with general cost functions.
\newblock In P.~Marcellini et~al, editor, {\em Partial Differential Equations
  and Applications}, number 177 in Lecture Notes in Pure and Appl. Math., pages
  29--35. Dekker, New York, 1996.

\bibitem{Caffarelli92b}
L.A. Caffarelli.
\newblock Boundary regularity of maps with convex potentials.
\newblock {\em Comm. Pure Appl. Math.}, 45:1141--1151, 1992.

\bibitem{Caffarelli92}
L.A. Caffarelli.
\newblock The regularity of mappings with a convex potential.
\newblock {\em J. Amer. Math. Soc.}, 5:99--104, 1992.

\bibitem{Caffarelli96b}
L.A. Caffarelli.
\newblock Boundary regularity of maps with convex potentials --- {II}.
\newblock {\em Ann. of Math. (2)}, 144:453--496, 1996.

\bibitem{CaffarelliMcCann10}
L.A. Caffarelli and R.J. McCann.
\newblock Free boundaries in optimal transport and {Monge-Amp\`ere} obstacle
  problems.
\newblock {\em Ann. of Math. (2)}, 171:673--730, 2010.

\bibitem{CaffarelliFeldmanMcCann00}
L.A. {Caffarelli, M. Feldman and R.J. McCann}.
\newblock {Constructing optimal maps for Monge's transport problem as a limit
  of strictly convex costs}.
\newblock {\em J. Amer. Math. Soc.}, 15:1--26, 2002.

\bibitem{Carlier01}
G.~Carlier.
\newblock A general existence result for the principal-agent problem with
  adverse selection.
\newblock {\em J. Math. Econom.}, 35:129--150, 2001.

\bibitem{CarlierEkeland08}
G.~Carlier and I.~Ekeland.
\newblock Equilibrium structure of a bidimensional asymmetric city.
\newblock {\em Nonlinear Anal. Real World Appl.}, 8:725--748, 2008.

\bibitem{CarrilloMcCannVillani06}
J.A. {Carrillo, R.J. McCann, and C. Villani}.
\newblock Contractions in the $2$-wasserstein length space and thermalization
  of granular media.
\newblock {\em Arch. Rational Mech. Anal.}, 179:217--263, 2006.

\bibitem{ChampionDePascale09p}
T.~Champion and L.~De Pascale.
\newblock The {Monge problem in $\R^d$}.
\newblock {\em Preprint at {\tt
  http://cvgmt.sns.it/papers/chadep09/champion-depascale.pdf}}.

\bibitem{ChampionDePascaleJuutinen08}
T.~{Champion, L.~De~Pascale, and P.~Juutinen}.
\newblock The {$\infty$-Wasserstein} distance: local solutions and existence of
  optimal transport maps.
\newblock {\em SIAM J. Math. Anal.}, 40, 2008.

\bibitem{ChiapporiMcCannNesheim10}
P.-A. {Chiappori, R.J. McCann, and L. Nesheim}.
\newblock Hedonic price equilibria, stable matching and optimal transport:
  equivalence, topology and uniqueness.
\newblock {\em Econom. Theory}, 42:317--354, 2010.

\bibitem{Cordero-Erausquin99S}
D.~Cordero-Erausquin.
\newblock In\'egalit\'e de {Pr\'ekopa-Leindler} sur la sph\`ere.
\newblock {\em C.R. Acad. Sci. Paris S\'er. I Math.}, 329:789--792, 1999.

\bibitem{Cordero-Erausquin99T}
D.~Cordero-Erausquin.
\newblock Sur le transport de mesures p\'eriodiques.
\newblock {\em C.R. Acad. Sci. Paris S\'er. I Math.}, 329:199--202, 1999.

\bibitem{CorderoMcCannSchmuckenschlager01}
D.~Cordero-Erausquin, R.J.~McCann and M.~Schmuckenschl{\"a}ger.
\newblock {A Riemannian interpolation inequality \`a la Borell, Brascamp and
  Lieb}.
\newblock {\em Invent. Math.}, 146:219--257, 2001.

\bibitem{CorderoMcCannSchmuckenschlager06}
D.~Cordero-Erausquin, R.J.~McCann and M.~Schmuckenschl{\"a}ger.
\newblock {Prekopa-Leindler type inequalities on Riemannian manifolds, Jacobi
  fields, and optimal transport}.
\newblock {\em Ann. Fac. Sci. Toulouse Math. (6)}, 15:613--635, 2006.

\bibitem{Cordero-ErausquinNazaretVillani04}
D.~{Cordero-Erausquin, B.~Nazaret, and C.~Villani}.
\newblock {A mass-transportation approach to sharp Sobolev and
  Gagliardo-Nirenberg inequalities}.
\newblock {\em Adv. Math.}, 182:307--332, 2004.

\bibitem{CuestaMatran89}
J.A. Cuesta-Albertos and C.~Matr\'an.
\newblock Notes on the {W}asserstein metric in {H}ilbert spaces.
\newblock {\em Ann. Probab.}, 17:1264--1276, 1989.

\bibitem{Cuesta-AlbertosTuero-Diaz93}
J.A. Cuesta-Albertos and A.~Tuero-D{\'\i}az.
\newblock A characterization for the solution of the {Monge-Kantorovich} mass
  transference problem.
\newblock {\em Statist. Probab. Lett.}, 16:147--152, 1993.

\bibitem{Cullen06}
M.J.P. Cullen.
\newblock {\em A Mathematical Theory of Large Scale Atmosphere/Ocean Flows}.
\newblock Imperial College Press, London, 2006.

\bibitem{CullenPurser84}
M.J.P Cullen and R.J. Purser.
\newblock An extended {Lagrangian} model of semi-geostrophic frontogenesis.
\newblock {\em J. Atmos. Sci.}, 41:1477--1497, 1984.

\bibitem{CullenPurser89}
M.J.P Cullen and R.J. Purser.
\newblock Properties of the {Lagrangian} semi-geostrophic equations.
\newblock {\em J. Atmos. Sci.}, 46:2684--2697, 1989.

\bibitem{Delanoe91}
P.~Delano\"e.
\newblock {Classical solvability in dimension two of the second boundary-value
  problem associated with the Monge-Amp\`ere operator}.
\newblock {\em Ann. Inst. H. Poincar\`e Anal. Non Lin\`eaire}, 8:443--457,
  1991.

\bibitem{DelanoeGe}
P.~Delano\"e and Y.~Ge.
\newblock Regularity of optimal transportation maps on compact, locally nearly
  spherical, manifolds.
\newblock {\em {\rm To appear in} J. Reine Angew. Math.}

\bibitem{Dobrushin70}
R.~Dobrushin.
\newblock {Definition of a system of random variables by means of conditional
  distributions (Russian)}.
\newblock {\em Teor. Verojatnost. i Primenen.}, 15:469--497, 1970.

\bibitem{Dudley76}
R.M. Dudley.
\newblock {\em Probabilities and metrics - Convergence of laws on metric
  spaces, with a view to statistical testing}.
\newblock Universitet Matematisk Institut, Aarhus, Denmark, 1976.

\bibitem{Ekeland05}
I.~Ekeland.
\newblock An optimal matching problem.
\newblock {\em ESAIM: Control, Optimisation and Calculus of Variations},
  11:57--71, 2005.

\bibitem{Evans98}
L.C. Evans.
\newblock {\em Partial Differential Equations}, volume~19 of {\em Graduate
  Studies in Mathematics}.
\newblock American Mathematical Society, Providence, 1998.

\bibitem{EvansGangbo99}
L.C. Evans and W.~Gangbo.
\newblock Differential equations methods for the {M}onge-{K}antorovich mass
  transfer problem.
\newblock {\em {Mem. Amer. Math. Soc.}}, 137:1--66, 1999.

\bibitem{FathiFigalli10}
A.~Fathi and A.~Figalli.
\newblock Optimal transportation on non-compact manifolds.
\newblock {\em Israel J. Math.}, 175:1--59, 2010.

\bibitem{FigalliGigli10p}
A.~Figalli and N.~Gigli.
\newblock {Local seimconvexity of Kantorovich potentials on noncompact
  manifolds}.
\newblock {\em {\rm To appear in} ESAIM Control Optim. Calc. Var.}

\bibitem{FigalliKimMcCann}
A.~Figalli, Y.-H. Kim, and R.J. McCann.
\newblock {Continuity and injectivity of optimal maps for non-negatively
  cross-curved costs}.
\newblock {\em {\rm Preprint at} {\tt www.math.toronto.edu/mccann}}.

\bibitem{FigalliKimMcCann-spheres}
A.~Figalli, Y.-H. Kim, and R.J. McCann.
\newblock {Regularity of optimal transport maps on multiple products of
  spheres}.
\newblock {\em {\rm Preprint at} {\tt www.math.toronto.edu/mccann}}.

\bibitem{FigalliKimMcCann-econ}
A.~Figalli, Y.-H. Kim, and R.J. McCann.
\newblock {When is multidimensional screening a convex program?}
\newblock {\em {\rm Preprint at} {\tt www.math.toronto.edu/mccann}}.

\bibitem{FigalliLoeper09}
A.~Figalli and G.~Loeper.
\newblock {$C^1$ regularity of solutions of the Monge-Amp\`ere equation for
  optimal transport in dimension two}.
\newblock {\em Calc. Var. Partial Differential Equations}, 35:537--550, 2009.

\bibitem{FigalliMaggi10p}
A.~Figalli and F.~Maggi.
\newblock On the shape of liquid drops and crystals in the small mass regime.
\newblock {\em Preprint at {\tt
  http://cvgmt.sns.it/papers/figmag10/fm-small4.pdf}}.

\bibitem{FigalliMaggiPratelli10p}
A.~{Figalli, F.~Maggi and A.~Pratelli}.
\newblock A mass transportation approach to quantitative isoperimetric
  inequalities.
\newblock {\em {\rm To appear in} Invent. Math.}

\bibitem{FigalliRifford08p}
A.~Figalli and L.~Rifford.
\newblock Continuity of optimal transport maps on small deformations of
  $\mathbb{S}^2$.
\newblock {\em Comm. Pure Appl. Math.}, 62: 1670--1706, 2009.

\bibitem{FigalliRiffordVillani-Sn09p}
A.~{Figalli, L. Rifford and C. Villani}.
\newblock Nearly round spheres look convex.
\newblock {\em Preprint}.

\bibitem{FigalliRiffordVillani-MTW09p}
A.~{Figalli, L. Rifford and C. Villani}.
\newblock {On the Ma-Trudinger-Wang curvature on surfaces}.
\newblock {\em Preprint}.

\bibitem{ForzaniMaldonado04}
L.~Forzani and D.~Maldonado.
\newblock Properties of the solutions to the {Monge-Amp\`ere} equation.
\newblock {\em Nonlinear Anal.}, 57:815--829, 2004.

\bibitem{Gangbo95}
W.~Gangbo.
\newblock {\em Habilitation thesis}.
\newblock Universit\'e de Metz, 1995.

\bibitem{GangboMcCann95}
W.~Gangbo and R.J. McCann.
\newblock Optimal maps in {M}onge's mass transport problem.
\newblock {\em C.R. Acad. Sci. Paris S\'er. I Math.}, 321:1653--1658, 1995.

\bibitem{GangboMcCann96}
W.~Gangbo and R.J. McCann.
\newblock The geometry of optimal transportation.
\newblock {\em Acta Math.}, 177:113--161, 1996.

\bibitem{GangboMcCann00}
W.~Gangbo and R.J. McCann.
\newblock {Shape recognition via Wasserstein distance}.
\newblock {\em Quart. Appl. Math.}, 58:705--737, 2000.

\bibitem{Gigli09p}
N.~Gigli.
\newblock {On the inverse implication of Brenier-McCann theorems and the
  structure of $(P_2(M),W_2)$}.
\newblock {\em Preprint at {\tt http://cvgmt.sns.it/papers/gigc/Inverse.pdf}}.

\bibitem{GilbargTrudinger83}
D.~Gilbarg and N.S. Trudinger.
\newblock {\em Elliptic Partial Differential Equations of Second Order}.
\newblock Springer-Verlag, New York, 1983.

\bibitem{GlimmOliker03}
T.~Glimm and V.~Oliker.
\newblock {Optical design of single reflector systems and the Monge-Kantorovich
  mass transfer problem}.
\newblock {\em J. Math. Sci.}, 117:4096--4108, 2003.

\bibitem{HakerZhuTannenbaumAngenent04}
S.~{Haker, L. Zhu, A. Tannenbaum and S. Angenent}.
\newblock Optimal mass transport for registration and warping.
\newblock {\em Int. J. Computer Vision}, 60:225--240, 2004.

\bibitem{HarveyLawson10p}
F.R. {Harvey and H.B.~Lawson,~Jr.}
\newblock {Split special Lagrangian geometry}.
\newblock {\em Preprint}.

\bibitem{Kakutani41}
S.~Kakutani.
\newblock A generalization of brouwer's fixed point theorem.
\newblock {\em Duke Math. J.}, 8:457--459, 1941.

\bibitem{Kantorovich42}
L.~Kantorovich.
\newblock On the translocation of masses.
\newblock {\em C.R. (Doklady) Acad. Sci. URSS (N.S.)}, 37:199--201, 1942.

\bibitem{Kantorovich48}
L.~Kantorovich.
\newblock On a problem of {M}onge ({I}n {R}ussian).
\newblock {\em Uspekhi Math. Nauk.}, 3:225--226, 1948.

\bibitem{KantorovichRubinstein58}
L.V. Kantorovich and G.S. Rubinstein.
\newblock On a space of completely additive functions.
\newblock {\em Vestnik Leningrad. Univ.}, 13:52--59, 1958.

\bibitem{Kellerer84}
H.G. Kellerer.
\newblock Duality theorems for marginal problems.
\newblock {\em Z. Wahrsch. Verw. Gebiete}, 67:399--432, 1984.

\bibitem{KimCounterexample}
Y.-H. Kim.
\newblock {Counterexamples to continuity of optimal transportation on
  positively curved Riemannian manifolds}.
\newblock {\em Int. Math. Res. Not.}, Art. ID rnn120:1--15, 2008.

\bibitem{KimMcCannAppendices}
Y.-H. Kim and R.J. McCann.
\newblock Appendices to original version of {Continuity}, curvature, and the
  general covariance of optimal transportation.
\newblock {\em {\rm Preprint at {\tt arXiv:math/0712.3077v1}}}.

\bibitem{KimMcCann07p}
Y.-H. Kim and R.J. McCann.
\newblock Continuity, curvature, and the general covariance of optimal
  transportation.
\newblock {\em J. Eur. Math. Soc. (JEMS)}, 12:1009--1040, 2010.

\bibitem{KimMcCann08p}
Y.-H. Kim and R.J. McCann.
\newblock {Towards the smoothness of optimal maps on Riemannian submersions and
  Riemannian products (of round spheres in particular)}.
\newblock {\em {\rm Preprint at {\tt arXiv:math/0806.0351v1} To appear in} J.
  Reine Angew. Math.}

\bibitem{KimMcCannWarren09p}
Y.-H. {Kim, R.J. McCann and M. Warren}.
\newblock {Pseudo-Riemannian geometry calibrates optimal transportation}.
\newblock {\em {\rm Preprint at {\tt www.math.toronto.edu/mccann}. To appear
  in} Math. Res. Lett.}

\bibitem{Koopmans49}
T.C. Koopmans.
\newblock Optimum utilization of the transportation system.
\newblock {\em Econometrica (Supplement)}, 17:136--146, 1949.

\bibitem{Lee09p}
P.W.Y. Lee.
\newblock {New computable necessary conditions for the regularity theory of
  optimal transportation}.
\newblock {\em {\rm Preprint at {\tt arXiv:0912.4795}. To appear in} SIAM J. Math. Anal.}

\bibitem{LeeLi09p}
P.W.Y. Lee and J.~Li.
\newblock {New examples on spaces of negative sectional curvature satisfying
  Ma-Trudinger-Wang conditions}.
\newblock {\em {\rm Preprint at} {\tt arXiv:0911.3978}}.

\bibitem{LeeMcCann09p}
P.W.Y. Lee and R.J. McCann.
\newblock {The Ma-Trudinger-Wang curvature for natural mechanical actions}.
\newblock {\em {\rm Preprint at} {\tt www.math.toronto.edu/mccann}. {\rm To
  appear in} Calc. Var. Partial Differential Equations}.

\bibitem{Levin99}
V.L. Levin.
\newblock Abstract cyclical monotonicity and {Monge} solutions for the general
  {Monge-Kantorovich} problem.
\newblock {\em Set-valued Anal.}, 7:7--32, 1999.

\bibitem{Li09}
J.~Li.
\newblock {Smooth optimal transportation on hyperbolic space}.
\newblock Master's thesis, University of Toronto, 2009.

\bibitem{Liu09}
J.~Liu.
\newblock H{\"o}lder regularity of optimal mappings in optimal transportation.
\newblock {\em Calc Var. Partial Differential Equations}, 34:435--451, 2009.

\bibitem{LiuTrudingerWang10}
J.~{Liu, N.S.~Trudinger, X.-J.~Wang}.
\newblock Interior {$C^{2,\alpha}$} regularity for potential functions in
  optimal transportation.
\newblock {\em Comm. Partial Differential Equations}, 35:165--184, 2010.

\bibitem{Loeper08p}
G.~Loeper.
\newblock {On the regularity of maps solutions of optimal transportation
  problems II. The sphere case and the reflector antenna}.
\newblock {\em {\rm To appear in} Arch. Ration. Mech. Anal.}

\bibitem{Loeper09}
G.~Loeper.
\newblock On the regularity of solutions of optimal transportation problems.
\newblock {\em Acta Math.}, 202:241--283, 2009.

\bibitem{LoeperVillani10}
G.~Loeper and C.~Villani.
\newblock Regularity of optimal transport in curved geometry: the non-focal
  case.
\newblock {\em Duke Math. J.}, 151:431--485, 2010.

\bibitem{Lott09}
J.~Lott.
\newblock {Optimal transport and Perelman's reduced volume}.
\newblock {\em Calc. Var. Partial Differential Equations}, 36:49--84, 2009.

\bibitem{LottVillani09}
J.~Lott and C.~Villani.
\newblock Ricci curvature for metric measure spaces via optimal transport.
\newblock {\em Annals Math. (2)}, 169:903--991, 2009.

\bibitem{MaTrudingerWang05}
X.-N. {Ma, N. Trudinger and X.-J. Wang}.
\newblock Regularity of potential functions of the optimal transportation
  problem.
\newblock {\em Arch. Rational Mech. Anal.}, 177:151--183, 2005.

\bibitem{MaggiVillani08}
F.~Maggi and C.~Villani.
\newblock {Balls have the worst best Sobolev inequalities. II. Variants and
  extensions}.
\newblock {\em Calc. Var. Partial Differential Equations}, 31:47--74, 2008.

\bibitem{McAfeeMcMillan88}
R.P. McAfee and J.~McMillan.
\newblock Multidimensional incentive compatibility and mechanism design.
\newblock {\em J. Econom. Theory}, 46:335--354, 1988.

\bibitem{McCann94}
R.J. McCann.
\newblock {\em A Convexity Theory for Interacting Gases and Equilibrium
  Crystals}.
\newblock PhD thesis, Princeton University, 1994.

\bibitem{McCann95}
R.J. McCann.
\newblock Existence and uniqueness of monotone measure-preserving maps.
\newblock {\em Duke Math. J.}, 80:309--323, 1995.

\bibitem{McCann97}
R.J. McCann.
\newblock A convexity principle for interacting gases.
\newblock {\em Adv. Math.}, 128:153--179, 1997.

\bibitem{McCann98}
R.J. McCann.
\newblock Equilibrium shapes for planar crystals in an external field.
\newblock {\em Comm. Math. Phys.}, 195:699--723, 1998.

\bibitem{McCann99}
R.J. McCann.
\newblock Exact solutions to the transportation problem on the line.
\newblock {\em R. Soc. Lond. Proc. Ser. A Math. Phys. Eng. Sci.},
  455:1341--1380, 1999.

\bibitem{McCann01}
R.J. McCann.
\newblock Polar factorization of maps on {R}iemannian manifolds.
\newblock {\em Geom. Funct. Anal.}, 11:589--608, 2001.

\bibitem{McCann06}
R.J. McCann.
\newblock Stable rotating binary stars and fluid in a tube.
\newblock {\em Houston J. Math.}, 32:603--632, 2006.

\bibitem{McCannTopping10}
R.J. McCann and P.~Topping.
\newblock Ricci flow, entropy, and optimal transportation.
\newblock {\em Amer. J. Math.}, 132:711--730, 2010.

\bibitem{McCannPassWarren10p}
R.J. {McCann, B.~Pass and M.~Warren}.
\newblock Rectifiability of optimal transportation plans.
\newblock {\em Preprint at {\tt www.math.toronto/mccann}}.

\bibitem{MilmanSchechtman86}
V.D. Milman and G.~Schechtman.
\newblock {\em Asymptotic theory of finite-dimensional normed spaces. With an
  appendix by M. Gromov}.
\newblock Springer-Verlag, Berlin, 1986.

\bibitem{Minty62}
George~J. Minty.
\newblock Monotone (nonlinear) operators in hilbert space.
\newblock {\em Duke Math. J.}, 29:341--346, 1962.

\bibitem{Mirrlees71}
J.A. Mirrlees.
\newblock An exploration in the theory of optimum income taxation.
\newblock {\em Rev. Econom. Stud.}, 38:175--208, 1971.

\bibitem{Monge81}
G.~Monge.
\newblock M\'emoire sur la th\'eorie des d\'eblais et de remblais.
\newblock {\em Histoire de l'{A}cad\'emie Royale des Sciences de {P}aris, avec
  les {M}\'emoires de Math\'ematique et de Physique pour la m\^eme ann\'ee},
  pages 666--704, 1781.

\bibitem{MonteiroPage98}
P.K. Monteiro and F.H.~Page Jr.
\newblock Optimal sellling mechanisms for multiproduct monopolists: incentive
  compatibility in the presence of budget constraints.
\newblock {\em J. Math. Econom.}, 30:473--502, 1998.

\bibitem{MussaRosen78}
M.~Mussa and S.~Rosen.
\newblock Monopoly and product quality.
\newblock {\em J. Econom. Theory}, 18:301--317, 1978.

\bibitem{vonNeumann28}
J.~von Neumann.
\newblock {Zur Theorie der Gesellschaftsspiele. (German)}.
\newblock {\em Math. Ann.}, 100:295--320, 1928.

\bibitem{vonNeumann37}
J.~von Neumann.
\newblock {\"Uber ein Okonomisches Gleichungssystems und eine Verallgemeinerung
  die Brouwerschen Fixpunktsatzes}.
\newblock {\em Ergebuisse eines Mathematischen Seminars}, 8:73--83, 1937.

\bibitem{Otto01}
F.~Otto.
\newblock The geometry of dissipative evolution equations: The porous medium
  equation.
\newblock {\em Comm. Partial Differential Equations}, 26:101--174, 2001.

\bibitem{OttoVillani00}
F.~Otto and C.~Villani.
\newblock {Generalization of an inequality by Talagrand and links with the
  logarithmic Sobolev inequality}.
\newblock {\em J. Funct. Anal.}, 173:361--400, 2000.

\bibitem{Pass-personal}
B.~Pass.
\newblock Personal communication.
\newblock 2010.

\bibitem{Perelman02p}
G.~Perelman.
\newblock {The entropy formula for the Ricci flow and its geometric
  applications}.
\newblock {\em {\rm Preprint at} {\tt http://arxiv.org/abs/math/0211159}}.

\bibitem{Plakhov04b}
A.Yu. Plakhov.
\newblock {Newton's problem of the body of minimal averaged resistance
  (Russian)}.
\newblock {\em Mat. Sb.}, 195:105--126, 2004.

\bibitem{PurserCullen87}
J.~Purser and M.J.P. Cullen.
\newblock {\em J. Atmos. Sci}, 44:3449--3468, 1987.

\bibitem{RachevRuschendorf98}
S.T. Rachev and L.~R{\"u}schendorf.
\newblock {\em Mass Transportation Problems}.
\newblock Probab. Appl. Springer-Verlag, New York, 1998.

\bibitem{vonRenesseSturm05}
M.-K.~von {Renesse and K.-T. Sturm}.
\newblock {Transport inequalities, gradient estimates, entropy and Ricci
  curvature}.
\newblock {\em Comm. Pure Appl. Math.}, 58:923--940, 2005.

\bibitem{Rochet87}
J.-C. Rochet.
\newblock A necessary and sufficient condition for rationalizability in a
  quasi-linear context.
\newblock {\em J. Math. Econom.}, 16:191--200, 1987.

\bibitem{RochetChone98}
J.-C. Rochet and P.~Chon\'e.
\newblock Ironing, sweeping and multidimensional screening.
\newblock {\em Econometrica}, 66:783--826, 1998.

\bibitem{Rockafellar66}
R.T. Rockafellar.
\newblock Characterization of the subdifferentials of convex functions.
\newblock {\em Pacific J. Math.}, 17:497--510, 1966.

\bibitem{Ruschendorf90}
L.~R{\"u}schendorf.
\newblock Bounds for distributions with multivariate marginals.
\newblock In K.~Mosler and M.~Scarsini, editors, {\em Stochastic Orders and
  Decision Under Risk}, number~19 in IMS Lecture Notes - Monograph Series,
  pages 285--310. Institute of Mathematical Statistics, Hayward, CA, 1991.

\bibitem{Ruschendorf95}
L.~R{\"u}schendorf.
\newblock Optimal solutions of multivariate coupling problems.
\newblock {\em Appl. Math. (Warsaw)}, 23:325--338, 1995.

\bibitem{Ruschendorf96}
L.~R{\"u}schendorf.
\newblock On $c$-optimal random variables.
\newblock {\em Statist. Probab. Lett.}, 37:267--270, 1996.

\bibitem{RuschendorfRachev90}
L.~R{\"u}schendorf and S.T. Rachev.
\newblock A characterization of random variables with minimum {$L^2$}-distance.
\newblock {\em J. Multivariate Anal.}, 32:48--54, 1990.

\bibitem{SmithKnott87}
C.~Smith and M.~Knott.
\newblock On the optimal transportation of distributions.
\newblock {\em J. Optim. Theory Appl.}, 52:323--329, 1987.

\bibitem{SmithKnott92}
C.~Smith and M.~Knott.
\newblock On {Hoeffding-Fr\'echet} bounds and cyclic monotone relations.
\newblock {\em J. Multivariate Anal.}, 40:328--334, 1992.

\bibitem{Spence74}
M.~Spence.
\newblock Competitive and optimal responses to signals: An analysis of
  efficiency and distribution.
\newblock {\em J. Econom. Theory}, 7:296--332, 1974.

\bibitem{Sturm06ab}
K.-T. Sturm.
\newblock {On the geometry of metric measure spaces, I and II}.
\newblock {\em Acta Math.}, 196:65--177, 2006.

\bibitem{Sudakov76}
V.N. Sudakov.
\newblock Geometric problems in the theory of infinite-dimensional probability
  distributions.
\newblock {\em Proc. Steklov Inst. Math.}, 141:1--178, 1979.

\bibitem{Tanaka73}
H.~Tanaka.
\newblock {An inequality for a functional of probability distributions and its
  application to Kac's one-dimensional model of a Maxwellian gas}.
\newblock {\em Z. Wahrscheinlichkeitstheorie und Verw. Gebiete}, 27:47--52,
  1973.

\bibitem{Topping09}
P.~Topping.
\newblock {${\mathcal L}$-optimal transportation for Ricci flow}.
\newblock {\em J. Reine Angew. Math.}, 636:93--122, 2009.

\bibitem{Trudinger94}
N.S. Trudinger.
\newblock Isoperimetric inequalities for quermassintegrals.
\newblock {\em Ann. Inst. H. Poincaré Anal. Non Linéaire}, 11:411--425, 1994.

\bibitem{TrudingerWang01}
N.S. Trudinger and X.-J. Wang.
\newblock {On the Monge mass transfer problem}.
\newblock {\em Calc. Var. Paritial Differential Equations}, 13:19--31, 2001.

\bibitem{TrudingerWang09b}
N.S. Trudinger and X.-J. Wang.
\newblock {On the second boundary value problem for Monge-Amp\`ere type
  equations and optimal transportation}.
\newblock {\em Ann. Sc. Norm. Super. Pisa Cl. Sci. (5)}, 8:1--32, 2009.

\bibitem{TrudingerWang09c}
N.S. Trudinger and X.-J. Wang.
\newblock {On\phantom{v}strict convexity and $C^1$-regularity of potential
  functions in optimal transportation}.
\newblock {\em Arch. Rational Mech. Anal.}, 192:403--418, 2009.

\bibitem{Urbas97}
J.~Urbas.
\newblock {On the second boundary value problem for equations of Monge-Amp\`ere
  type}.
\newblock {\em J. Reine Angew. Math.}, 487:115--124, 1997.

\bibitem{Urbas98p}
J.~Urbas.
\newblock {\em {Mass transfer problems}}.
\newblock Lecture Notes, University of Bonn, 1998.

\bibitem{Viaclovsky06p}
J.A. Viaclovsky.
\newblock {Conformal geometry and fully nonlinear equations}.
\newblock In Phillip~A. Griffiths, editor, {\em Inspired by S.S. Chern.},
  volume~11, pages 435--460. World Scientific Publishing Co., Hackensack, 2006.

\bibitem{Villani03}
C.~Villani.
\newblock {\em Topics in Optimal Transportation}, volume~58 of {\em Graduate
  Studies in Mathematics}.
\newblock American Mathematical Society, Providence, 2003.

\bibitem{Villani09}
C.~Villani.
\newblock {\em Optimal Transport. Old and New}, volume 338 of {\em Grundlehren
  der Mathematischen Wissenschaften [Fundamental Principles of Mathematical
  Sciences]}.
\newblock Springer, New York, 2009.

\bibitem{Wang96}
X.-J. Wang.
\newblock {On the design of a reflector antenna}.
\newblock {\em Inverse Problems}, 12:351--375, 1996.

\bibitem{Wang04}
X.-J. Wang.
\newblock {On the design of a reflector antenna II}.
\newblock {\em Calc. Var. Partial Differential Equations}, 20:329--341, 2004.

\bibitem{Wasserstein69}
L.N. Wasserstein.
\newblock Markov processes over denumerable products of spaces describing large
  systems of automata.
\newblock {\em Problems of Information Transmission}, 5:47--52, 1969.

\bibitem{Xia07}
Q.~Xia.
\newblock The formation of a tree leaf.
\newblock {\em ESAIM Control Optim. Calc. Var.}, 13:359--377, 2007.

\end{thebibliography}

\end{document}